\documentclass[nopreprintline]{elsarticle}
\usepackage[utf8]{inputenc}
\usepackage[T1]{fontenc}
\usepackage{latexsym,amssymb,amsfonts,amsmath}
\usepackage{mathtools}
\usepackage{amsthm}
\usepackage{graphicx}
\usepackage[all]{xy}
\usepackage{natbib}
\large
\usepackage[dvipsnames]{xcolor}
\usepackage{mathrsfs} 

\usepackage{tikz}
\usetikzlibrary{arrows, decorations.markings, shapes, patterns}
\usepackage{tkz-berge}

\usepackage{caption}
\usepackage{subcaption}

\usepackage{comment}

\theoremstyle{plain}
\newtheorem{proposition}{Proposition}
\newtheorem{lemma}[proposition]{Lemma}
\newtheorem{theorem}[proposition]{Theorem}
\newtheorem{corollary}[proposition]{Corollary}
\newtheorem{case}{Case}[proposition]
\newtheorem{subcase}{Case}
\numberwithin{subcase}{case}

\theoremstyle{definition}

\theoremstyle{remark}
\newtheorem{claim}{Claim}[proposition]

\usepackage{lineno}
\DeclareMathOperator{\len}{length}

\DeclareMathOperator{\diam}{diam}

\title{Redicoloring some classes of circulant torunaments}

\author{Narda Cordero-Michel\corref{cor1}}
\ead{narda@ciencias.unam.mx}

\author{Mika Olsen}
\ead{olsen@cua.uam.mx}

\cortext[cor1]{Corresponding author.}

\address{Departamento de Matem\'{a}ticas Aplicadas y Sistemas, \\ Universidad Aut\'{o}noma Metropolitana - Cuajimalpa, M\'{e}xico}

\begin{document}

\begin{abstract}
Given a digraph $D$ with no loops, the \textit{dicoloring graph} of $D$, denoted by $\mathcal{D}_k(D)$, is the graph whose vertices
are the acyclic $k$-colorings of $D$ and two colorings are adjacent in $\mathcal{D}_k(D)$ if they differ in color on exactly one vertex. In this paper, we prove that there is no expression $\phi(\vec{\chi})$ in terms of the dichromatic number $\vec\chi$, such that the graph $\mathcal{D}_k(D)$ is connected for all graphs $D$ and integers $k\geq \phi(\vec\chi)$. We give conditions for the dicoloring graph of two infinite families of circulant tournaments to be connected and we provide upper bounds for its diameter. In particular, for the Payley tournament $\vec{C}_{7}(1,2,4)$, also known as $ST_7$, we prove that $\mathcal{D}_k(\vec{C}_{7}(1,2,4))$ is connected and has diameter 8, for each $k\geq 3$.
\end{abstract}

\begin{keyword} 
Dicoloring graph, acyclic $k$-coloring, $k$-mixing, circulant tournament
\end{keyword}
\maketitle

\section{Introduction}

Cereceda (\cite{Cereceda2007}, 2007) focused his doctoral research on the subject of graph recolouring of a $k$-colorable graph $G$, which is a reconfiguration problem were it is analyzed if a $k$-coloring of $G$ might be transformed into another $k$-coloring of $G$ by recoloring one vertex at a time in such a way that a proper $k$-coloring is maintained at any step. This problem is represented by the $k$-color graph (or $k$-recoloring graph) $\mathcal{C}_k(G)$, it is the graph that has the set of proper $k$-colorings of $G$ as its vertex set and two $k$-colorings are adjacent whenever they differ in color on precisely one vertex of $G$.
He obtained significant results for this problem when $k=3$, he determined the complexity of deciding if the coloring graph is connected for $k\geq 3$ and, when $\mathcal{C}_k(G)$ is connected, he investigated the length of a shortest path in $\mathcal{C}_k(G)$ between two colorings. Ever since, many authors have studied graph recolorings, for a survey on this matter we refer the reader to \cite{Mynhardt2020}. 
Bousquet and his collaborators \cite{BOUSQUET2024103876} studied digraph redicolorings as a generalization of the graph recoloring problem. They introduced the concept of the $k$-dicoloring graph $\mathcal{D}_k(D)$ of a digraph $D$, it is the graph whose vertex set is the set of acyclic $k$-colorings of $D$, with two $k$-colorings being adjacent whenever they differ on the color of exactly one vertex.
Bousquet \textit{et al.} proved that it is PSPACE-complete to decide whether two given $k$-colorings of a digraph $D$ belong to the same connected component of $\mathcal{D}_k(D)$ even when $k=2$, when $D$ has maximum degree equal to 5, or when $D$ is an oriented planar graph with maximum degree equal to 6.
Moreover, they also proved sufficient conditions for $\mathcal{D}_k(D)$ being connected in terms of some parameters of the maximum degree and minimum degree in $D$, they provided upper bounds for the diameter of $\mathcal{D}_k(D)$, as well as a lower bound for the number of edges in an oriented graph whose dicoloring graph has isolated vertices, and they established some conjectures. In \cite{Picasarri20245}, Picasarri-Arrieta proved sufficient conditions for $\mathcal{D}_k(D)$ being connected in terms of the maximum semi-degree in $D$ and he gave upper bounds for the diameter of the dicoloring graph. In \cite{NISSE2024191}, Nisse \textit{et. al.} obtained bounds for the diameter of $\mathcal{D}_k(D)$ in terms of the cycle degeneracy of $D$, the digrundy number of $D$, and the $\mathcal{D}$-width of $D$. Furthermore, they showed a connection between the dicoloring graph of a digraph $D$ and the color graph of its underlying graph $UG(D)$.

In \cite{Cereceda2008}, Cereceda proved that there is no expression $\phi(\chi)$ in terms of the chromatic number $\chi$, such that the graph $\mathcal{C}_k(G)$ is connected for all graphs $G$ and integers $k\geq \phi(\chi)$. 
In this direction, we will see that there is no expression $\phi(\vec{\chi})$ in terms of the dichromatic number $\vec{\chi}$, such that the graph $\mathcal{D}_k(D)$ is connected for all digraphs $D$ and integers $k\geq \phi(\vec{\chi})$. 
He also proved that if $k=\chi(G)\le3$, then $\mathcal{C}_k(G)$ is disconnected and if $k=\chi(G)\ge4$, then $\mathcal{C}_k(G)$ may be connected or disconnected. In this direction, we will see that there is an infinite family of 2-mixing 2-dichromatic tournaments, an infinite family of 3-mixing 3-dichromatic tournaments and a 3-dichromatic tournament which is not 3-mixing.

In \cite{Picasarri20245}, Picasarri-Arrieta proved for every digraph $D$ with $\Delta \geq 3$, where $\Delta=\max\{\max(d^+(v), d^-(v))\colon v \in V(D)\}$, and every $k \geq \Delta + 1$, that $\mathcal{D}_k(D)$ consists of isolated vertices and at most one further component that has diameter at most $c_\Delta n^2$, where $c_\Delta = O(\Delta^2)$ is a constant depending only on $\Delta$.
In this paper, we improve this result for two families of circulant tournaments $\vec{C}_{2n+1}\langle \emptyset \rangle$ and $\vec{C}_{2n+1}\langle n\rangle$ defined in the following section. We 
show for $n\geq 2$ and $ k\geq 3$ that $\mathcal{D}_{k}(\vec{C}_{2n+1}\langle \emptyset \rangle)$ is connected and has diameter at most $4n+1+\lfloor \frac{n+1}{2} \rfloor$; we also prove for $n\geq 4$ and $ k\geq 3$ that $\mathcal{D}_{k}(\vec{C}_{2n+1}\langle n \rangle)$ is connected and has diameter at most $4n+2+\lfloor \frac{n}{2} \rfloor$, whenever $k\neq \frac{2n+1}{3}$; and $\mathcal{D}_{k}(\vec{C}_{2n+1}\langle n \rangle)$ consists of isolated vertices and one further component that has diameter at most $2n+1+\lfloor\frac{2n+1}{4}\rfloor$ whenever $k=\frac{2n+1}{3}$.
    
Along this paper we will work with families of digraphs without digons, also called \emph{oriented graphs}, whose dichromatic numbers are known. We omit ``whithout digons'' from the hypotheses and we use that any pair of vertices induces an acyclic set without mention it.

\section{Definitions}

An \textit{acyclic $k$-coloring} of a digraph $D$ is a function $\alpha \colon V(D)\to \{1,2,\ldots,k\}$ such that each \textit{color class}, not necessarily nonempty, $C_i^\alpha=\{x\in V(D)\colon \alpha(x)=i\}$ with $i\in\{1,2,\ldots,k\}$, induces an acyclic subdigraph of $D$.
The \textit{dichromatic number} of a digraph $D$, denoted by $\vec{\chi}(D)$, is the minimum number $k$ for which $D$ admits an acyclic $k$-coloring of its vertex set, and we say that $D$ is \textit{$\vec{\chi}(D)$-dichromatic}. In this paper, every coloring is acyclic, so we will omit the word ``acyclic''.

For any $k\geq \vec{\chi}(D)$, the \textit{$k$-dicoloring graph of $D$}, denoted by $\mathcal{D}_k(D)$, is defined as the graph whose vertices are the $k$-colorings of $D$, and two  $k$-colorings of $D$ are \textit{adjacent} whenever they differ on precisely one vertex. Two  colorings of $D$, $\alpha$ and $\beta$, are at \textit{distance} $d$, denoted by $d(\alpha,\beta)$, if there exists a sequence of  colorings of $D$ $\alpha=\alpha_0, \alpha_1, \ldots, \alpha_d=\beta$ such that $\alpha_i$ and $\alpha_{i+1}$ are adjacent for each $i\in\{0,1,\ldots,d-1\}$ and any other sequence with this property has at least the same number of elements. Notice that, if there exists a $k$-coloring of $D$, then it has at least $\frac{k!}{(k-p)!}$ different $k$-colorings, where $p$ is the number of nonempty color classes. The digraph $D$ is \textit{$k$-mixing} if $\mathcal{D}_k(D)$ is connected and \textit{$k$-freezable} if $\mathcal{D}_k(D)$ contains an isolated vertex. 

A digraph $D$ is \textit{uniquely $k$-colorable} for some $k$, $k\geq 2$, whenever every $k$-coloring of $D$ induces the same partition of $V(D)$, this is when any two $k$-colorings of $D$ differ only in a permutation of the colors used for the same set of color classes. 

A digraph $D$ on $n$ vertices is always $(n+1)$-mixing as: 1) any $k$-coloring $\alpha$, with $k\leq n+1$, can be converted into a $(n+1)$-coloring $\beta$ simply by changing the colors of vertices, one by one, in such a way that each color class in $\beta$ is a singleton; 2) if $\beta$ and $\gamma$ are two $(n+1)$-colorings of $D$ such that each of their color classes is a singleton, then $\beta$ can be converted into $\gamma$ in the following way, for each vertex $x\in V(D)$, if $x$ is not colored with $\gamma(x)$ and $\gamma(x)$ is not currently assigned to any vertex, then recolor $x$ with $\gamma(x)$, and if $\gamma(x)$ is currently assigned to $y$, then first recolor $y$ with the color that is not currently assigned to any vertex, and then recolor $x$ with $\gamma(x)$. Hence, any two $k$-colorings, with $k\leq n+1$, can be converted into each other by recoloring one vertex at a time.

It is immediate from the definition of a dicoloring graph that if $D$ is a digraph with digons, then its 2-dicoloring graph $\mathcal{D}_2(D)$ is disconnected (see example in Figure \ref{fig: disconnected dicoloring graph}).

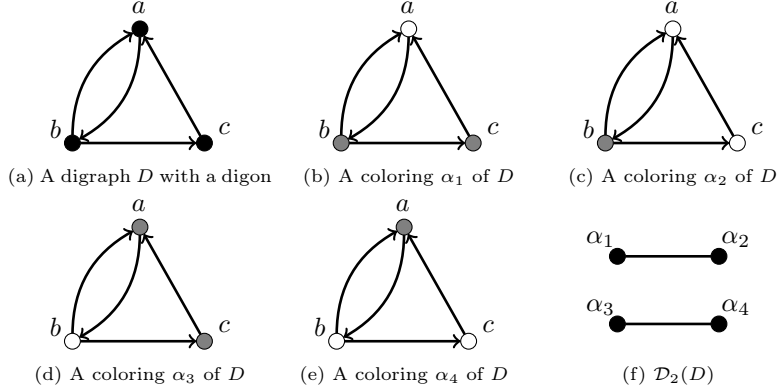
\begin{figure}[h!]
\centering
\begin{subfigure}[b]{0.29\textwidth}
\centering
\begin{tikzpicture}[x=0.45cm, y=0.45cm]
\tikzstyle{VertexStyle} = [shape = circle, fill = black, 
minimum size = 6pt, inner sep = 2pt, draw]
\SetVertexNoLabel
\Vertex[x=-4.6, y=-0.55]{b};
\Vertex[x=-0.7, y=-0.55]{c};
\Vertex[x=-2.605, y=2.824]{a};

\draw[color=black] (-5.1,-0.12) node {$b$};
\draw[color=black] (-0.12,-0.12) node {$c$};
\draw[color=black] (-2.64,3.46) node {$a$};

\draw [line width=1pt, black, ->] (b)--(c);
\draw [line width=1pt, black, ->] (c)--(a);
\draw[line width=1pt, black, ->] (a) to[bend left] (b);
\draw[line width=1pt, black, ->] (b) to[bend left] (a);
\end{tikzpicture}
\caption{A digraph $D$ with a digon}
\label{fig: 2-dichromatic digraph}   
\end{subfigure}
\begin{subfigure}[b]{0.28\textwidth}
\centering
\begin{tikzpicture}[x=0.45cm, y=0.45cm]
\tikzstyle{VertexStyle} = [shape = circle, fill = black, 
minimum size = 6pt, inner sep = 2pt, draw]
\SetVertexNoLabel

\tikzstyle{VertexStyle} = [shape = circle, fill = white, 
minimum size = 6pt, inner sep = 2pt, draw]
\SetVertexNoLabel
\Vertex[x=-2.605, y=2.824]{a};

\tikzstyle{VertexStyle} = [shape = circle, fill = gray, 
minimum size = 6pt, inner sep = 2pt, draw]
\SetVertexNoLabel
\Vertex[x=-4.6, y=-0.55]{b};
\Vertex[x=-0.7, y=-0.55]{c};

\draw[color=black] (-5.1,-0.12) node {$b$};
\draw[color=black] (-0.12,-0.12) node {$c$};
\draw[color=black] (-2.64,3.46) node {$a$};

\draw [line width=1pt, black, ->] (b)--(c);
\draw [line width=1pt, black, ->] (c)--(a);
\draw[line width=1pt, black, ->] (a) to[bend left] (b);
\draw[line width=1pt, black, ->] (b) to[bend left] (a);
\end{tikzpicture}
\caption{A coloring $\alpha_1$ of $D$ }
\label{fig: acyclic 2-coloring 1}    
\end{subfigure}
\begin{subfigure}[b]{0.28\textwidth}
\centering
\begin{tikzpicture}[x=0.45cm, y=0.45cm]
\tikzstyle{VertexStyle} = [shape = circle, fill = black, 
minimum size = 6pt, inner sep = 2pt, draw]
\SetVertexNoLabel

\tikzstyle{VertexStyle} = [shape = circle, fill = white, 
minimum size = 6pt, inner sep = 2pt, draw]
\SetVertexNoLabel
\Vertex[x=-2.605, y=2.824]{a};
\Vertex[x=-0.7, y=-0.55]{c};

\tikzstyle{VertexStyle} = [shape = circle, fill = gray, 
minimum size = 6pt, inner sep = 2pt, draw]
\SetVertexNoLabel
\Vertex[x=-4.6, y=-0.55]{b};

\draw[color=black] (-5.1,-0.12) node {$b$};
\draw[color=black] (-0.12,-0.12) node {$c$};
\draw[color=black] (-2.64,3.46) node {$a$};

\draw [line width=1pt, black, ->] (b)--(c);
\draw [line width=1pt, black, ->] (c)--(a);
\draw[line width=1pt, black, ->] (a) to[bend left] (b);
\draw[line width=1pt, black, ->] (b) to[bend left] (a);
\end{tikzpicture}
\caption{A coloring $\alpha_2$ of $D$}
\label{fig: acyclic 2-coloring 2}    
\end{subfigure}

\begin{subfigure}[b]{0.28\textwidth}
\centering
\begin{tikzpicture}[x=0.45cm, y=0.45cm]
\tikzstyle{VertexStyle} = [shape = circle, fill = white, 
minimum size = 6pt, inner sep = 2pt, draw]
\SetVertexNoLabel
\Vertex[x=-4.6, y=-0.55]{b};

\tikzstyle{VertexStyle} = [shape = circle, fill = gray, 
minimum size = 6pt, inner sep = 2pt, draw]
\SetVertexNoLabel
\Vertex[x=-2.605, y=2.824]{a};
\Vertex[x=-0.7, y=-0.55]{c};

\draw[color=black] (-5.1,-0.12) node {$b$};
\draw[color=black] (-0.12,-0.12) node {$c$};
\draw[color=black] (-2.64,3.46) node {$a$};

\draw [line width=1pt, black, ->] (b)--(c);
\draw [line width=1pt, black, ->] (c)--(a);
\draw[line width=1pt, black, ->] (a) to[bend left] (b);
\draw[line width=1pt, black, ->] (b) to[bend left] (a);
\end{tikzpicture}
\caption{A coloring $\alpha_3$ of $D$}
\label{fig: acyclic 2-coloring 3}    
\end{subfigure}
\begin{subfigure}[b]{0.28\textwidth}
\centering
\begin{tikzpicture}[x=0.45cm, y=0.45cm]
\tikzstyle{VertexStyle} = [shape = circle, fill = white, 
minimum size = 6pt, inner sep = 2pt, draw]
\SetVertexNoLabel
\Vertex[x=-4.6, y=-0.55]{b};
\Vertex[x=-0.7, y=-0.55]{c};

\tikzstyle{VertexStyle} = [shape = circle, fill = gray, 
minimum size = 6pt, inner sep = 2pt, draw]
\SetVertexNoLabel
\Vertex[x=-2.605, y=2.824]{a};

\draw[color=black] (-5.1,-0.12) node {$b$};
\draw[color=black] (-0.12,-0.12) node {$c$};
\draw[color=black] (-2.64,3.46) node {$a$};

\draw [line width=1pt, black, ->] (b)--(c);
\draw [line width=1pt, black, ->] (c)--(a);
\draw[line width=1pt, black, ->] (a) to[bend left] (b);
\draw[line width=1pt, black, ->] (b) to[bend left] (a);
\end{tikzpicture}
\caption{A coloring $\alpha_4$ of $D$}
\label{fig: acyclic 2-coloring 4}    
\end{subfigure}
\begin{subfigure}[b]{0.28 \textwidth}
\centering
\begin{tikzpicture}[x=0.45cm, y=0.45cm]
\tikzstyle{VertexStyle} = [shape = circle, fill = black, 
minimum size = 6pt, inner sep = 2pt, draw]
\SetVertexNoLabel
\Vertex[x=0, y=3]{a1};
\Vertex[x=3, y=3]{a2};
\Vertex[x=0, y=1]{a3};
\Vertex[x=3, y=1]{a4};

\draw[color=black] (-0.5,3.5) node {$\alpha_1$};
\draw[color=black] (3.5,3.5) node {$\alpha_2$};
\draw[color=black] (-0.5,1.5) node {$\alpha_3$};
\draw[color=black] (3.5,1.5) node {$\alpha_4$};
\draw[color=black] (3.5,0.5) node {};

\draw [line width=1pt, black] (a1)--(a2);
\draw [line width=1pt, black] (a3)--(a4);
\end{tikzpicture}
\caption{$\mathcal{D}_2(D)$}
\label{fig: dicoloring graph}   
\end{subfigure}

\caption{A 2-dichromatic digraph $D$, with a disconnected 2-dicoloring graph.}
\label{fig: disconnected dicoloring graph} 
\end{figure}

For $n\geq 2$, the \textit{cyclic circulant tournament}, denoted by $\vec{C}_{2n+1}(1,2, \ldots, n)$, is the digraph whose vertex set is $V (\vec{C}_{2n+1}(1,2, \ldots, n)) = \mathbb{Z}_{2n+1}$ and its arc set consists of all the arcs $(a, a+j)$ for every jump $j \in \{1, 2, \ldots, n\}$ and every $a \in \mathbb{Z}_{2n+1}$. 

The circulant tournament obtained from $\vec{C}_{2n+1}(1,2, \ldots, n)$ by reversing one of its jumps, say $j$, is denoted by $\vec{C}_{2n+1}\langle j\rangle$, this is, for a fixed $j$ the set of arcs $\{(a, a+j)\colon a\in\mathbb{Z}_{2n+1}\}$ of $\vec{C}_{2n+1}(1,2, \ldots, n)$ is replaced by the set of arcs  $\{(a, a - j)\colon a\in\mathbb{Z}_{2n+1}\}$ in $\vec{C}_{2n+1}\langle j\rangle$. In this way, we can denote the cyclic circulant tournament $\vec{C}_{2n+1}(1,2, \ldots, n)$ by $\vec{C}_{2n+1}\langle \emptyset \rangle$.

\section{Preliminaries}

First of all we establish an obvious proposition that has consequences for a family of digraphs studied by Neumann-Lara and his collaborators in \cite{NEUMANNLARA198665}. 

\begin{proposition}
    If a digraph $D$ is uniquely $k$-colorable, then $D$ is $k$-freezable and $\mathcal{D}_k(D)$ is the graph with $k!$ isolated vertices.
\end{proposition}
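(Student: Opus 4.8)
The plan is to establish two facts about $\mathcal{D}_k(D)$: that it has exactly $k!$ vertices, and that none of them has a neighbour. Write $\mathcal{P}=\{C_1,\dots,C_p\}$ for the common partition of $V(D)$ into the nonempty color classes induced by \emph{every} $k$-coloring (this partition exists by unique $k$-colorability). My first step is to verify that $p=k$, i.e.\ that no color is left unused. Clearly $p\le k$; and if $p<k$ I would contradict uniqueness by producing a coloring whose induced partition differs from $\mathcal{P}$. Fix a coloring $\alpha$ realizing $\mathcal{P}$ and an unused color $c^{*}$: if some class $C_i$ has at least two vertices, recoloring one vertex of $C_i$ with $c^{*}$ is again acyclic and gives a strict refinement of $\mathcal{P}$; if instead every class is a singleton then $p=|V(D)|$, and since $D$ has no digons any two vertices induce an acyclic set, so merging two singletons into one color class gives a coarser acyclic partition. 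In either case a second partition appears, contradicting unique $k$-colorability, so $p=k$. Consequently every $k$-coloring is a bijection between the $k$ classes and the $k$ colors, and there are exactly $k!=\tfrac{k!}{(k-p)!}$ of them; these are the vertices of $\mathcal{D}_k(D)$.

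Next I would show that each vertex is isolated, by directly excluding edges. Suppose $\alpha$ and $\beta$ were adjacent, differing only at a single vertex $v$, say $\alpha(v)=a$ and $\beta(v)=b$ with $a\neq b$. Both are valid colorings, so both induce $\mathcal{P}$. But in $\beta$ the color class of $b$ is exactly $C_b^{\alpha}\cup\{v\}$, which strictly contains the nonempty class $C_b^{\alpha}\in\mathcal{P}$. Since the only member of $\mathcal{P}$ containing $v$ is $C_a^{\alpha}$, and $C_a^{\alpha}\cap C_b^{\alpha}=\emptyset$ with $C_b^{\alpha}\neq\emptyset$, the set $C_b^{\alpha}\cup\{v\}$ cannot be a member of $\mathcal{P}$. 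Hence $\beta$ induces a partition different from $\mathcal{P}$, contradicting unique $k$-colorability. Therefore no two colorings differ on exactly one vertex, every vertex of $\mathcal{D}_k(D)$ is isolated, and in particular $D$ is $k$-freezable; combined with the count this gives precisely $k!$ isolated vertices.

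The edge-exclusion in the second paragraph is the routine part; the step that actually carries the weight is proving $p=k$. It is exactly the nonemptiness $C_b^{\alpha}\neq\emptyset$ that makes the edge-exclusion go through: if an unused color were permitted, one could recolor a singleton class into it \emph{without} changing the induced partition, producing a genuine edge and breaking both the count and the isolation. This is where the standing no-digon convention (equivalently, that every $k$-coloring uses all $k$ colors, as happens when $k=\vec{\chi}(D)$, forcing $|V(D)|\ge k\ge 2$) is doing the real work, so I would invoke it explicitly when disposing of the all-singletons subcase and ruling out $p<k$.
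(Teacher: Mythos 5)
The paper offers no proof of this proposition at all---it is stated as an ``obvious'' consequence of the definitions---so there is no authors' argument to compare yours against; what matters is whether your proof stands on its own, and it essentially does. Your decomposition (first force $p=k$, hence exactly $k!$ colorings; then exclude edges) is the natural one, and the edge-exclusion step is exactly right: if $\alpha$ and $\beta$ differed only at $v$ with $\alpha(v)=a$, $\beta(v)=b$, then $C_b^\beta=C_b^\alpha\cup\{v\}$ contains $v$ yet differs from $C_a^\alpha$ precisely because $C_b^\alpha\neq\emptyset$ and is disjoint from $C_a^\alpha$, so $\beta$ could not induce $\mathcal{P}$. One caveat: your disposal of the all-singletons subcase requires merging \emph{two} singletons, so it silently assumes $|V(D)|\geq 2$; and in fact the proposition as literally stated fails for the one-vertex digraph, which is vacuously uniquely $k$-colorable for every $k\geq 2$ while $\mathcal{D}_k(D)$ is the complete graph $K_k$, hence connected with no isolated vertex. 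So the hypothesis you flag in your closing paragraph---that all $k$ colors are genuinely used, as happens automatically when $k=\vec{\chi}(D)$, which is the situation of the Neumann-Lara tournaments to which the paper applies this proposition---is not just convenient but necessary, and making it explicit (rather than conflating it with the no-digon convention, which is a separate assumption) is the one repair your write-up needs; with that stated, your argument is complete and supplies exactly the details the paper omits.
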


In \cite{NEUMANNLARA198665}, Neumann-Lara \textit{et al.} proved that there exists an infinite family of uniquely colorable $k$-dichromatic tournaments, for all $k \geq 2$. They also proved that the tournament $\vec{C}_{2n+1}\langle n\rangle - \{0\}$ is uniquely 2-colorable, for $n \geq 4$, with chromatic classes $\{1, 2, \ldots, n\}$ and $\{n+1, n+2, \ldots, 2n\}$. 

\begin{corollary}
    For each $k\ge 2$ there exists a tournament $T$ such that $T$ is $k$-freezable and it's dicoloring graph has $k!$ isolated vertices. In particular, for all $n \geq 4$, $\vec{C}_{2n+1}\langle n\rangle - \{0\}$ is 2-freezable and $\mathcal{D}_2(\vec{C}_{2n+1}\langle n\rangle - \{0\})$ consists of two isolated vertices.
\end{corollary}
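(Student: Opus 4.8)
The plan is to derive both assertions as immediate consequences of the Proposition together with the two cited results of Neumann-Lara \textit{et al.} First I would establish the general statement. By \cite{NEUMANNLARA198665}, for each $k \geq 2$ there exists a uniquely $k$-colorable $k$-dichromatic tournament $T$. Applying the Proposition to $T$ with this value of $k$, it follows at once that $T$ is $k$-freezable and that $\mathcal{D}_k(T)$ consists of $k!$ isolated vertices.

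For the particular case, I would take $k = 2$ and set $T = \vec{C}_{2n+1}\langle n\rangle - \{0\}$ with $n \geq 4$. By the second cited result of \cite{NEUMANNLARA198665}, this tournament is uniquely $2$-colorable, with the two chromatic classes $\{1, 2, \ldots, n\}$ and $\{n+1, n+2, \ldots, 2n\}$. The Proposition then yields that $T$ is $2$-freezable and that $\mathcal{D}_2(T)$ consists of $2! = 2$ isolated vertices, as claimed.

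There is essentially no obstacle here: the substantive content of the corollary is already packaged in the Proposition, and the role of the two results of \cite{NEUMANNLARA198665} is simply to supply the hypothesis of unique colorability in each case. The only point worth keeping explicit is why the number of isolated vertices is exactly $k!$ rather than fewer. This is the count of distinct colorings obtained by permuting the $k$ colors over the fixed partition into color classes guaranteed by unique colorability; since no two of these permuted colorings can differ on a single vertex, each is isolated in $\mathcal{D}_k(T)$, which is precisely what the Proposition records.
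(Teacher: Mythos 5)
Your proposal is correct and matches the paper's intended derivation exactly: the corollary is stated without a separate proof precisely because it follows immediately from the Proposition applied to the uniquely $k$-colorable tournaments of \cite{NEUMANNLARA198665}, with the special case being the uniquely $2$-colorable tournament $\vec{C}_{2n+1}\langle n\rangle - \{0\}$ for $n\ge 4$. Your closing remark on why the count is exactly $k!$ (all $k$ classes are nonempty since the tournament is $k$-dichromatic, so the color permutations give $k!$ distinct colorings, none adjacent to any other coloring) is a sound elaboration of what the Proposition already encapsulates.
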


In the following two lemmas, we analyze some particular cases where at least one of two colorings has a lot of singular color classes, in these cases it is easier to determine the distance between the colorings.

\begin{lemma}
\label{lemma: k colors k vertices}
Let $D$ be a digraph of order $n$ and $k$ and $l$ integers with $2\leq k\leq \min\{l,n\}$. And let $\alpha$ and $\beta$ be two $l$-colorings of $D$, where $\beta$ has $k$ singular color classes, $C_1^\beta$, $C_2^\beta$, \ldots, $C_k^\beta$, and $\alpha(u)=\beta(u)$ for each $u\in V(D)\setminus\left(\bigcup_{i=1}^k C_i^\beta\right)$. Then $d(\alpha, \beta)\leq k$ in $\mathcal{D}_n(D)$.
\end{lemma}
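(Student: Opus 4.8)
The plan is to recolor $\alpha$ into $\beta$ by changing only the $k$ vertices on which they may disagree, each at most once. Write $C_i^\beta=\{v_i\}$, so that $\beta(v_i)=i$ for $i\in\{1,\dots,k\}$. Every vertex $u\notin\{v_1,\dots,v_k\}$ satisfies $\alpha(u)=\beta(u)$, and since $C_i^\beta$ is a singleton we have $\beta(u)\notin\{1,\dots,k\}$; hence the colors $1,\dots,k$ are used, under both $\alpha$ and $\beta$, only by vertices of $\{v_1,\dots,v_k\}$. Thus $\alpha$ and $\beta$ differ exactly on the set $M=\{v_i:\alpha(v_i)\neq i\}$, and it suffices to exhibit a walk in $\mathcal{D}_n(D)$ that recolors each vertex of $M$ once, directly to its $\beta$-color; such a walk has length $|M|\le k$.

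First I would make the reduction precise: at any stage call a not-yet-corrected vertex $v_i$ \emph{placeable} if recoloring it to color $i$ keeps the coloring acyclic. The only color class that changes nontrivially is class $i$, which is contained in $\{v_1,\dots,v_k\}$ and equals $\{v_i\}$ together with the vertices that still carry color $i$. Since the set of vertices colored $i$ by $\alpha$ is acyclic and a single recoloring only adds $v_i$ to a subset of it, any directed cycle created must pass through $v_i$ and use at least two other vertices still colored $i$. Consequently, if $v_i$ is \emph{not} placeable, then at least two as-yet-uncorrected vertices still have color $i$.

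The main step, and the crux of the argument, is to show that as long as some vertex remains miscolored there is a placeable one, so that the greedy process never stalls and corrects one vertex per step. Let $U$ be the current set of uncorrected vertices. If every $v_i\in U$ were unplaceable, then by the previous paragraph each color $i$ with $v_i\in U$ would be carried by at least two vertices of $U$; summing these counts over $\{i:v_i\in U\}$ yields at least $2|U|$, while the same sum counts each vertex of $U$ at most once, through its own current color, and is therefore at most $|U|$. This forces $|U|=0$, a contradiction. Hence a placeable vertex always exists; correcting it removes one vertex from $U$ and can only shrink the other color classes, so acyclicity is preserved throughout and the process terminates after $|M|\le k$ recolorings.

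The delicate point is exactly this non-stalling guarantee: the bound $k$ leaves no room to park a vertex in an auxiliary color and move it back later, so every recoloring must send a vertex straight to its final color while keeping all classes acyclic, and it is the counting inequality $2|U|\le|U|$ that rules out a deadlock. It then remains only to observe that every coloring produced uses colors among those of $\alpha$ together with $\{1,\dots,k\}\subseteq\{1,\dots,n\}$, so the entire walk lies in $\mathcal{D}_n(D)$, giving $d(\alpha,\beta)\le k$.
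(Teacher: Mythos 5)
Your proposal is correct, and it proves the lemma by a genuinely different mechanism than the paper. The paper proceeds by induction on $k$: in the base case $k=2$ it recolors the two relevant vertices directly; in the inductive step it splits into the case where some class $C_j^\alpha$ with $j\le k$ is empty (then $v_j$ is trivially placeable, and induction finishes), and the case where all these classes are nonempty singletons, so that $\alpha$ restricted to $\{v_1,\ldots,v_k\}$ is a permutation of the colors $1,\ldots,k$; the paper then recolors along the cycles of this permutation, which keeps every intermediate class of size at most two and hence acyclic by the no-digon convention. Your argument replaces both the induction and the permutation structure with a single greedy step plus a deadlock-freeness count: an unplaceable $v_i$ forces at least two uncorrected vertices to carry color $i$, and summing over the uncorrected set $U$ gives $2|U|\le |U|$, so a placeable vertex always exists. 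What this buys is a unified treatment (your count subsumes the paper's two cases: an empty class means $|S_j|=0$, and the permutation case means every $|S_i|\le 1$, so everything is placeable), no induction, and in fact the stronger conclusion that at least half of the uncorrected vertices are placeable at every stage. What the paper's route buys is an explicit recoloring order and the guarantee that every intermediate class among $1,\ldots,k$ has at most two vertices, so it only ever invokes the trivial fact that pairs of vertices are acyclic, whereas your greedy step may create larger intermediate classes whose acyclicity is certified by the placeability test itself. Both arguments recolor each miscolored vertex exactly once, both yield the same bound $k$, and both rely on the paper's standing no-digon convention (a cycle through $v_i$ needs at least two other vertices of its class), so neither extends to digraphs with digons.
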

\begin{proof}
    Proceeding by induction on $k$.
    
    Whenever $k=2$, we have that $C_1^\alpha \cup C_2^\alpha (\subseteq C_1^\beta \cup C_2^\beta$. Start at $\alpha$ and recolor the vertices in $C_1^\beta \cup C_2^\beta$ with the color assigned to them by $\beta$, this takes at most two steps since $D$ has no digon and any pair of vertices induces an acyclic set. 

    Assume that the result is valid for $k\geq 2$. And suppose that $\beta$ has $k+1$ singular color classes $C_1^\beta$, $C_2^\beta$, \ldots, $C_{k+1}^\beta$, and $\alpha(u)=\beta(u)$ for each $u\in V(D)\setminus\left(\bigcup_{i=1}^{k+1} C_i^\beta\right)$. Hence, $\bigcup_{i=1}^{k+1} C_i^\alpha\subseteq\bigcup_{i=1}^{k+1} C_i^\beta$.

    Start at $\alpha$. If there is an index $j\in \{1,2,\ldots, k+1\}$ such that $C_j^\alpha=\emptyset$, then recolor the unique vertex $v$ in $C_j^\beta$ with color $\beta(v)$. Call the new coloring $\alpha'$ and let $J=\{1,2,\ldots,k+1\}\setminus\{j\}$. The coloring $\alpha'$  satisfies that $d(\alpha,\alpha')=1$, $\alpha'(u)=\beta(u)$ for each vertex in $V(D)\setminus\left(\bigcup_{i\in J} C_i^\beta\right)$ and $|C_i^\beta|=1$ for each $i\in J$. So, by the inductive hypothesis, $d(\alpha',\beta)\leq k$. And thus, $d(\alpha,\beta)\leq k+1$. 

    If $C_i^\alpha\neq \emptyset$ for each $i \in \{1,2,\ldots, k+1\}$, then $C_i^\alpha$ is a singleton for each $i \in \{1,2,\ldots, k+1\}$. Assume that $C_i^\alpha\neq C_i^\beta$ for each $i \in \{1,2,\ldots, k+1\}$. Call $u_i$ the vertex in $C_i^\alpha$. Recolor vertex $u_1$ with color $\beta(u_1)$, then recolor vertex $u_{\beta(u_1)}$ with color $\beta(u_{\beta(u_1)})$, continue in this way until each vertex has the color that $\beta$ assigns to it, this takes $k+1$ steps, and each intermediary step is a coloring since $D$ has no digon. Then, $d(\alpha,\beta)\leq k+1$.
\end{proof}

\begin{lemma}
\label{lemma: k colors k+1 vertices}
Let $D$ be a digraph of order $n$  , and $k$ and $l$ integers with $2\leq k\leq \min\{l, n-1\}$. And let $\alpha$ and $\beta$ be two $l$-colorings of $D$, where $\beta$ has a color class with two vertices, $C_1^\beta$, $k-1$ singular color classes, $C_2^\beta$, $C_3^\beta$, \ldots, $C_{k}^\beta$, and $\alpha(u)=\beta(u)$ for each $u\in V(D)\setminus\left(\bigcup_{i=1}^k C_i^\beta\right)$. Then $d(\alpha, \beta)\leq k+1$ in $\mathcal{D}_n(D)$.
\end{lemma}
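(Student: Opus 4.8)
The plan is to reduce everything to Lemma~\ref{lemma: k colors k vertices}, exploiting that the single non-singular class $C_1^\beta$ lets us merge two vertices into one color for free. Write $C_1^\beta=\{v,w\}$ and $C_i^\beta=\{x_i\}$ for $2\le i\le k$, and set $S=\{v,w,x_2,\dots,x_k\}$, so $|S|=k+1$. Since $\alpha$ and $\beta$ agree outside $S$ and $\beta$ uses the colors $1,\dots,k$ only on $S$, the same holds for $\alpha$; hence $\bigcup_{i=1}^k C_i^\alpha\subseteq S$, and it suffices to recolor the vertices of $S$, always assigning them colors in $\{1,\dots,k\}$. During such a process every class of a color $>k$ only loses vertices (so it stays acyclic), while every class of a color in $\{1,\dots,k\}$ stays inside $S$; thus it is enough to keep these classes of size at most two, which are acyclic as $D$ has no digon.

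First I would dispose of the case in which one of the two vertices of $C_1^\beta$, say $w$, satisfies $\alpha(w)\notin\{1,\dots,k\}$. Let $\delta$ be the $l$-coloring that agrees with $\beta$ except that $\delta(w)=\alpha(w)$. Passing from $\beta$ to $\delta$ only moves $w$ from color $1$ to color $\alpha(w)$: the class of color $1$ shrinks to $\{v\}$, and the new class of color $\alpha(w)$ is contained in $C_{\alpha(w)}^\alpha$ (under $\alpha$ that class already contains $w$ together with all vertices of color $\alpha(w)$ outside $S$), hence is acyclic, so $\delta$ is a coloring. Now $\delta$ has exactly the $k$ singular classes $\{v\},\{x_2\},\dots,\{x_k\}$, and $\alpha(u)=\delta(u)$ for every $u\notin\{v,x_2,\dots,x_k\}$ (in particular on $w$ and off $S$). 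By Lemma~\ref{lemma: k colors k vertices}, $d(\alpha,\delta)\le k$, and recoloring $w$ from $\alpha(w)$ to $1$ gives $\beta$ in one further step, whence $d(\alpha,\beta)\le k+1$.

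It then remains to treat the case $\alpha(v),\alpha(w)\in\{1,\dots,k\}$, where I would argue directly in the spirit of the last case of Lemma~\ref{lemma: k colors k vertices}: recolor the vertices of $S$ one at a time, each to its $\beta$-color, processing them along the chains of the induced color assignment and momentarily allowing a class of size two (hence acyclic). The repeated color $1$ here plays the role that an auxiliary fresh color plays in ordinary proper recoloring: whenever a cyclic dependence among colors would block progress, the two slots available at color $1$, together with the fact that a monochromatic pair is acyclic, let us break it without introducing an extra color. Consequently each vertex of $S$ is recolored at most once, and $d(\alpha,\beta)\le|S|=k+1$.

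The main obstacle is exactly obtaining the bound $k+1$ rather than the easy $k+2$: routing $w$ through a color unused by $\beta$ and then applying Lemma~\ref{lemma: k colors k vertices} spends two extra steps, so the saving must come from driving the two vertices of $C_1^\beta$ into color $1$ directly. Making the last case fully rigorous requires producing an explicit order of recolorings that never creates a monochromatic directed cycle; keeping every intermediate class of size at most two makes this automatic once $\alpha$ has small classes in the colors $1,\dots,k$, and in general it can be arranged by first emptying any oversized class (which is harmless, since removing vertices preserves acyclicity) before filling the target classes.
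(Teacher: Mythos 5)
Your first case---where one of the two vertices of $C_1^\beta$, say $w$, has $\alpha(w)\notin\{1,\dots,k\}$---is correct, and it is a genuinely different (and cleaner) route than the paper's: passing through the intermediate coloring $\delta$, applying Lemma~\ref{lemma: k colors k vertices}, and paying one extra step to return $w$ to color $1$ is sound, and each acyclicity check you make there is valid.

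The genuine gap is in your second case, which is where the actual difficulty of the lemma lives. The sentence ``consequently each vertex of $S$ is recolored at most once, and $d(\alpha,\beta)\le|S|=k+1$'' is precisely what has to be proved, and what you offer in its place is a heuristic. Two concrete problems. First, in this case the classes $C_1^\alpha,\dots,C_k^\alpha$ need not be small: $\alpha$ may place three or more vertices of $S$ in a single class and may have several classes of size two, so the remark that ``keeping every intermediate class of size at most two makes this automatic'' does not address the configurations that actually occur. Second, your fallback recipe---first empty any oversized class, then fill the target classes---is not a well-defined procedure: emptying a class means recoloring its vertices to their $\beta$-colors, and such a move can itself be blocked, because the target class may already hold two vertices forming a directed triangle with the vertex being moved; so one must prove that a valid move always exists (a deadlock-freeness argument), which in general requires interleaving ``emptying'' and ``filling'' moves. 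The paper sidesteps exactly this issue by induction on $k$: whenever some color in $\{1,\dots,k\}$ is unused by $\alpha$, the problem reduces to the inductive hypothesis or to Lemma~\ref{lemma: k colors k vertices}, and in the single remaining case $\alpha$ restricted to $S$ has exactly one class of size two and all others singletons, so every step of its chain argument recolors a vertex into a class currently holding at most one vertex, making validity automatic. Your Case B can be repaired (for instance by a counting argument showing that a state in which every not-yet-correct vertex is blocked cannot exist), but as written the key claim is asserted rather than proved.
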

\begin{proof}
    Proceeding by induction on $k$.
    
    Whenever $k=2$, $C_1^\beta=\{u,v\}$ and $C_2^\beta= \{w\}$ and $C_1^\alpha \cup C_2^\alpha \subseteq \{u,v,w\}$.  

    Start at $\alpha$. If $C_i^\alpha=\emptyset$, then $C_{3-i}^\alpha\subseteq\{u,v,w\}$ and we can recolor the vertices in $\{u,v,w\}$ to get $\beta$ in at most 3 steps. 

    If $|C_i^\alpha|=1$, then $|C_{3-i}^\alpha|=2$. Assuming $\alpha\neq \beta$, we can recolor one vertex in $C_{3-i}^\alpha$ with the color assigned to it by $\beta$, namely $i$. Then, if needed, recolor the vertex in $C_i^\alpha$ with color $3-i$. And finally, recolor the last vertex, if needed, with color $i$. This takes at most $3$ steps.

    Assume that the result is valid for $k\geq 2$. And suppose that $\beta$ has a color class with two vertices, namely $C_1^\beta$, $k$ singular color classes, namely $C_2^\beta$, $C_3^\beta$, \ldots, $C_{k+1}^\beta$, and $\alpha(u)=\beta(u)$ for each $u\in V(D)\setminus\left(\bigcup_{i=1}^{k+1} C_i^\beta\right)$. Hence, $\bigcup_{i=1}^{k+1} C_i^\alpha \subseteq  \bigcup_{i=1}^{k+1} C_i^\beta$.

    Start at $\alpha$. If $C_1^\alpha=\emptyset$, then recolor the two vertices in $C_1^\beta$ with color 1. The new coloring $\alpha'$ satisfies $d(\alpha,\alpha')=2$ and, by Lemma \ref{lemma: k colors k vertices},  $d(\alpha',\beta)\leq k$. Hence, $d(\alpha,\beta)\leq k+2$. Assume that $C_1^\alpha\neq\emptyset$ 
    
    If there is an index $j\in \{2,3,\ldots, k+1\}$ such that $C_j^\alpha=\emptyset$, then recolor the unique vertex $v$ in $C_j^\beta$ with color $\beta(v)$. Call the new coloring $\alpha'$ and let $J=\{2,3,\ldots,k+1\}\setminus\{j\}$. The coloring $\alpha'$ satisfies that $d(\alpha,\alpha')=1$, $\alpha'(u)=\beta(u)$ for each vertex in $V(D)\setminus\left(\bigcup_{i\in \{1\}\cup J} C_i^\beta\right)$ and $|C_i^\beta|=1$ for each $i\in J$. So, by the inductive hypothesis, $d(\alpha',\beta)\leq k+1$. And thus, $d(\alpha,\beta)\leq k+2$. 

    If $C_i^\alpha\neq \emptyset$ for each $i \in \{1,2,\ldots, k+1\}$, then $C_j^\alpha$ has precisely two elements for some $j\in \{1,2,\ldots, k+1\}$, and $C_i^\alpha$ is a singleton for each $i \in J$, where $J=\{1,2,\ldots, k+1\}\setminus\{j\}$. If $j=1$ and $C_1^\alpha=C_1^\beta$, then $d(\alpha,\beta)\leq k$, by Lemma \ref{lemma: k colors k vertices}. So, we assume this is not the case. Necessarily, one of the vertices in $C_j^\alpha$, say $v$, can be recolored with the color assigned to it by $\beta$, so we recolor $v$ with $\beta(v)$. The new coloring $\alpha'$ satisfies that $C_{\beta(v)}^{\alpha'}$ has precisely two elements. If the other vertex in $C_{\beta(v)}^{\alpha'}$, say $w$, has $\alpha'(w)=\beta(w)$, then $d(\alpha',\beta)\leq k$, by Lemma \ref{lemma: k colors k vertices}. If $\alpha'(w)\neq\beta(w)$, then we recolor vertex $w$ with $\beta(w)$. Proceeding this way, eventually, we get $\beta$ by recoloring  each vertex in $\bigcup_{i=1}^{k+1} C_i^\beta$ at most once. Hence, $d(\alpha,\beta)\leq k+2$. This concludes the proof.
\end{proof}

\begin{corollary}
    Any digraph $D$ of order $n$ is $k$-mixing for every $k\geq n-1$ and $\mathcal{D}_k(D)$ has diameter at most $2n$.
    \label{corollary: n-mixing}
\end{corollary}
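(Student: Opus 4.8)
The plan is to route any two $k$-colorings through a common, nearly-rainbow ``canonical'' coloring, splitting into the cases $k\ge n$ and $k=n-1$ according to how close to rainbow we can get with the palette at hand. In both cases the two legs of the route are controlled by the two preceding lemmas, each contributing at most $n$ recoloring steps, for a total of $2n$.

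First I would treat $k\ge n$. Here I fix a \emph{rainbow} $n$-coloring $\rho$ of $D$, i.e.\ one assigning $n$ distinct colors to the $n$ vertices; this is a legitimate acyclic coloring since every singleton is acyclic. Given an arbitrary $k$-coloring $\alpha$, I apply Lemma~\ref{lemma: k colors k vertices} with its parameter equal to $n$, taking $\rho$ in the role of the target coloring with $n$ singular color classes and $\alpha$ in the role of the source; the hypothesis $\alpha(u)=\rho(u)$ outside the union of those classes is vacuous because the classes exhaust $V(D)$. This yields $d(\alpha,\rho)\le n$ in $\mathcal{D}_n(D)$, and since $n\le k$ any such path also lives in $\mathcal{D}_k(D)$. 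Routing two $k$-colorings $\alpha,\beta$ through $\rho$ then gives $d(\alpha,\beta)\le d(\alpha,\rho)+d(\rho,\beta)\le 2n$.

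Next I would treat $k=n-1$ (for $n\le 2$ the statement is trivial, since then $\mathcal{D}_{n-1}(D)$ has at most one vertex, and $k=n-1\ge 2$ forces $n\ge 3$, matching the hypothesis of the relevant lemma). Now a fully rainbow coloring is impossible, so instead I fix a \emph{near-rainbow} coloring $\gamma$ having one color class of size two, say $\{v_1,v_2\}$, together with $n-2$ singletons, using exactly the colors $1,\dots,n-1$; this is again acyclic. For an arbitrary $(n-1)$-coloring $\alpha$ I apply Lemma~\ref{lemma: k colors k+1 vertices} with parameter $n-1$, once more with $\gamma$ as target and $\alpha$ as source, the agreement hypothesis being vacuous since $\gamma$'s classes cover $V(D)$. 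This gives $d(\alpha,\gamma)\le (n-1)+1=n$, and routing through $\gamma$ yields $d(\alpha,\beta)\le 2n$.

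The one point that needs care — and the main obstacle — is that the two lemmas bound distances in $\mathcal{D}_n(D)$, whereas in the case $k=n-1$ we must certify that the path never leaves the palette $\{1,\dots,n-1\}$, i.e.\ that it genuinely lies in $\mathcal{D}_{n-1}(D)$. I would resolve this by inspecting the recoloring moves in the proofs of the lemmas: every move assigns a vertex the color given to it by the target coloring, and here both the source $\alpha$ and the target $\gamma$ use only colors in $\{1,\dots,n-1\}$, while every intermediate color class is a subset of a union of $\gamma$'s (acyclic) classes. Hence no auxiliary $n$-th color is ever introduced, the entire sequence is a path in $\mathcal{D}_{n-1}(D)$, and the bound $d(\alpha,\beta)\le 2n$ holds there. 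Combining the two cases shows that $D$ is $k$-mixing with $\diam \mathcal{D}_k(D)\le 2n$ for every $k\ge n-1$.
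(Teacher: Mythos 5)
Your proof is correct and matches the paper's intent exactly: the corollary is stated there without proof as an immediate consequence of Lemmas \ref{lemma: k colors k vertices} and \ref{lemma: k colors k+1 vertices}, instantiated precisely as you do --- routing any two colorings through a fixed rainbow coloring when $k\geq n$, and through a fixed coloring with one doubleton class and $n-2$ singletons when $k=n-1$. Your closing observation (that the lemmas' recoloring sequences only ever use colors of the source and target, so the path stays in $\mathcal{D}_{k}(D)$) is a worthwhile clarification of a sloppy point in the lemmas' statements, but it does not change the route.
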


\section{The cyclic circulant tournament $\vec{C}_{2n+1}\langle \emptyset \rangle$}

The following result is an extended version of a Theorem 1 in  \cite{NEUMANNLARA198483} that will help us to prove the result announced at the introduction.

\begin{theorem}[Neumann-Lara and Urrutia \cite{NEUMANNLARA198483}]
\label{teo: Neumann-Lara} Let $n$ be a positive integer. Then $\vec{\chi}(\vec{C}_{2n+1}\langle \emptyset \rangle)=2$ and every 2-coloring of $\vec{C}_{2n+1}\langle \emptyset \rangle$ induces a partition of $V(\vec{C}_{2n+1}\langle \emptyset \rangle)$ into two sets of the form $\{a, a+1, a+2, \ldots,a+n\}$ and $\{a+n+1,a+n+2,  \ldots,a-1\}$, with $a\in\mathbb{Z}_{2n+1}$. Moreover, if $w$, $x$ and $y$ are three different vertices in $\mathbb{Z}_{2n+1}$ satisfying that $\{w,x,y\}\not \subseteq \{a, a+1, a+2, \ldots,a+n\}$, for each $a\in \mathbb{Z}_{2n+1}$, then $\{w,x,y\}$ induces a triangle in $\vec{C}_{2n+1}\langle \emptyset \rangle$.
\end{theorem}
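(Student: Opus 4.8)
The plan is to take the final ``moreover'' statement as the combinatorial engine and derive everything else from it. First I would record the basic orientation rule: since $2n+1$ is odd, for distinct $u,v\in\mathbb{Z}_{2n+1}$ exactly one of $v-u$ and $u-v$ lies in $\{1,\ldots,n\}$, and the arc is $u\to v$ precisely when $v-u\in\{1,\ldots,n\}$. An immediate consequence is that any block of consecutive vertices $\{a,a+1,\ldots,a+n\}$ of size $n+1$ induces a transitive, hence acyclic, subtournament: for $0\le i<j\le n$ the difference $j-i$ lies in $\{1,\ldots,n\}$, so every arc points ``forward,'' giving the transitive order $a\to a+1\to\cdots\to a+n$. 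This already supplies an acyclic $2$-coloring with classes $\{0,1,\ldots,n\}$ and $\{n+1,\ldots,2n\}$, so $\vec{\chi}(\vec{C}_{2n+1}\langle\emptyset\rangle)\le 2$.

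For the ``moreover'' part I would use a cyclic-gap description of three vertices. Writing three distinct vertices in cyclic order as $v_1,v_2,v_3$ with forward gaps $g_i=v_{i+1}-v_i\ge 1$ (indices read cyclically, with $v_1=v_3+g_3$) and $g_1+g_2+g_3=2n+1$, the three vertices fit inside some window $\{a,\ldots,a+n\}$ if and only if their smallest enclosing arc has at most $n+1$ vertices, which happens if and only if some gap satisfies $g_i\ge n+1$. Hence the hypothesis $\{w,x,y\}\not\subseteq\{a,\ldots,a+n\}$ for every $a$ is equivalent to $g_1,g_2,g_3\le n$. But then each gap lies in $\{1,\ldots,n\}$, so the orientation rule gives $v_1\to v_2\to v_3\to v_1$, i.e.\ $\{w,x,y\}$ induces a directed triangle. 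This proves the statement and, in particular, exhibits a $3$-cycle (for instance on $0,n,2n$, whose gaps are $n,n,1$), yielding the matching lower bound $\vec{\chi}\ge 2$ and hence $\vec{\chi}=2$.

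Finally I would deduce the partition structure. An acyclic set contains no directed triangle, so by the contrapositive of the statement just proved, every color class of a $2$-coloring is contained in some window of size $n+1$. Since the two classes partition the $2n+1$ vertices and each has size at most $n+1$, their sizes must be exactly $n+1$ and $n$; the class of size $n+1$ then coincides with the window containing it, say $\{a,a+1,\ldots,a+n\}$, and the other class is its complement $\{a+n+1,\ldots,a-1\}$, which is itself a consecutive window of size $n$. This is precisely the asserted form.

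The only genuinely delicate point is the gap characterization: carefully verifying the equivalence ``contained in a window of size $n+1$'' $\Leftrightarrow$ ``some cyclic gap is $\ge n+1$,'' and thus that its negation forces all three gaps to be $\le n$. Once that bookkeeping is pinned down, the orientation rule closes the triangle automatically, and the partition description follows from the simple size count, so I expect the write-up to be short after the gap lemma is settled.
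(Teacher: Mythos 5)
The paper never proves this statement --- it is quoted directly from Neumann-Lara and Urrutia --- so your attempt has to stand on its own; most of it does, but there is one genuine gap. The orientation rule, the transitivity of a window $\{a,a+1,\ldots,a+n\}$, the gap characterization for triples (``contained in a window of size $n+1$'' if and only if some cyclic gap is at least $n+1$), the resulting directed triangle, and the lower bound $\vec{\chi}\geq 2$ are all correct. The problem is the sentence ``An acyclic set contains no directed triangle, so by the contrapositive of the statement just proved, every color class of a $2$-coloring is contained in some window of size $n+1$.'' The contrapositive of the ``moreover'' part only tells you that every \emph{triple} of vertices of the class lies in \emph{some} window, a window that may depend on the triple; it does not tell you that the whole class lies in a single window. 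Families of circular arcs do not satisfy a Helly-type property for free, so the jump from triple-wise containment to containment of the entire class is precisely the nontrivial combinatorial content you still owe, and as written the partition structure does not follow.

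The missing step is true and can be repaired in two ways. Cheapest: a color class $C$ induces an acyclic subtournament, which has a source $v$ (a vertex dominating every other vertex of $C$), whence $C\subseteq\{v\}\cup N^{+}(v)=\{v,v+1,\ldots,v+n\}$, a window; this bypasses the triple statement entirely, and it is exactly how the paper argues the analogous fact for $\vec{C}_{2n+1}\langle n\rangle$ in Lemma \ref{lemma: n volteada aciclico de n}. Alternatively, stay with your gap bookkeeping: if $C$ lies in no window, then every consecutive cyclic gap of $C$ is at most $n$; pick $c\in C$, let $c'$ be the element of $C$ farthest forward from $c$ subject to $c'-c\leq n$, and let $c''$ be the next element of $C$ after $c'$. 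Then $c'-c\leq n$ by choice, $c''-c'\leq n$ because it is a gap of $C$, and $c-c''\leq n$ because maximality of $c'$ forces $c''-c\geq n+1$; so $\{c,c',c''\}$ is a triple contained in no window, hence a directed triangle inside $C$ by your ``moreover'' part, contradicting acyclicity. With either patch, the remainder of your write-up --- the size count forcing classes of sizes exactly $n+1$ and $n$, and the identification of the classes with a window and its complement --- goes through verbatim.
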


\begin{proposition}
\label{2-recoloring graph of the cyclic circulant tournament}
    The 2-dicoloring graph of the cyclic circulant tournament {$\vec{C}_{2n+1}\langle \emptyset \rangle$} is the cycle $C_{4n+2}$. Moreover, $\diam(\mathcal{D}_2(\vec{C}_{2n+1}\langle \emptyset \rangle))=2n+1$.
\end{proposition}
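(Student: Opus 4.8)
The plan is to describe the vertices and edges of $\mathcal{D}_2(\vec{C}_{2n+1}\langle \emptyset \rangle)$ explicitly and then recognize the resulting graph as a single cycle. By Theorem~\ref{teo: Neumann-Lara}, every $2$-coloring partitions $\mathbb{Z}_{2n+1}$ into an interval $S_a=\{a,a+1,\ldots,a+n\}$ of $n+1$ consecutive vertices and its complement $\bar S_a=\{a+n+1,\ldots,a-1\}$ of $n$ vertices. Since the two classes have different sizes, a coloring is completely specified by a pair $(a,\epsilon)$, where $a\in\mathbb{Z}_{2n+1}$ locates the $(n+1)$-vertex class and $\epsilon\in\{1,2\}$ is the color assigned to $S_a$. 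This gives exactly $2(2n+1)=4n+2$ vertices, matching the order of $C_{4n+2}$.

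Next I would determine the neighbors of $(a,\epsilon)$. Two colorings are adjacent only if they differ on one vertex, so they cannot share a partition, because swapping the two colors changes all $2n+1\ge 3$ vertices. Hence an edge must move a single vertex between the two classes while keeping the class sizes $n+1$ and $n$; this forces moving a vertex out of $S_a$, and for $\bar S_a\cup\{v\}$ to again be an interval of length $n+1$ the vertex $v$ must be an endpoint of $S_a$, i.e. $v=a$ or $v=a+n$. Recoloring $a$ yields $(a+n+1,\,\epsilon')$ and recoloring $a+n$ yields $(a+n,\,\epsilon')$, where $\epsilon'$ is the opposite color; any other recoloring would create a class of size $n+2$, which by the ``moreover'' part of Theorem~\ref{teo: Neumann-Lara} contains a triangle and is therefore not acyclic. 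Thus every vertex has degree exactly $2$, so $\mathcal{D}_2(\vec{C}_{2n+1}\langle \emptyset \rangle)$ is a disjoint union of cycles.

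The main obstacle, and the crux of the argument, is to prove that this union is a single cycle rather than several. For this I would track the map $T\colon (a,\epsilon)\mapsto (a+n+1,\epsilon')$ corresponding to ``recolor the left endpoint $a$''; this is a genuine edge of the graph, so iterating $T$ traces a closed walk along edges. Identifying the color with $\mathbb{Z}_2$, $T$ is translation by $(n+1,1)$ on $\mathbb{Z}_{2n+1}\times\mathbb{Z}_2$. Because $\gcd(n+1,2n+1)=1$ (as $2n+1=2(n+1)-1$), the first coordinate has order $2n+1$ and, combined with the parity flip, $T$ has order $\lcm(2n+1,2)=4n+2$. Since $4n+2$ equals the total number of vertices, the $T$-orbit of any coloring visits every vertex exactly once and then closes up, giving a Hamiltonian cycle built entirely from edges of $\mathcal{D}_2(\vec{C}_{2n+1}\langle \emptyset \rangle)$. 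As the graph is $2$-regular with $4n+2$ vertices, hence $4n+2$ edges, this Hamiltonian cycle is the whole graph, so $\mathcal{D}_2(\vec{C}_{2n+1}\langle \emptyset \rangle)=C_{4n+2}$.

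Finally, the diameter statement is immediate: $\diam(C_m)=\lfloor m/2\rfloor$, so $\diam(\mathcal{D}_2(\vec{C}_{2n+1}\langle \emptyset \rangle))=\lfloor (4n+2)/2\rfloor=2n+1$.
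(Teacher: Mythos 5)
Your proposal is correct and takes essentially the same route as the paper: both use Theorem~\ref{teo: Neumann-Lara} to enumerate all $4n+2$ colorings, show each has exactly two neighbors (obtained by recoloring an endpoint of the $(n+1)$-interval), and conclude that the $2$-regular graph is a single cycle by exhibiting a Hamiltonian cycle. Your translation map $T$ of order $4n+2$ traces precisely the paper's explicit Hamiltonian cycle $\beta_0\alpha_0\beta_1\alpha_1\cdots\beta_{2n}\alpha_{2n}\beta_0$, so the order computation in $\mathbb{Z}_{2n+1}\times\mathbb{Z}_2$ is merely a different bookkeeping for the same closing-up argument.
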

\begin{proof}
    Let $T=\vec{C}_{2n+1}\langle \emptyset \rangle$.
    For each $a\in \mathbb{Z}_{2n+1}$, let $\alpha_a$ and $\beta_a$ be the 2-colorings of $T$ given by $$\alpha_a(i)=\begin{cases}
        1&\text{if }i\in\{a, a+1, a+2, \ldots,a+n\};\\
        2&\text{if }i\in\{a+n+1,a+n+2,  \ldots,a-1\}.
    \end{cases}$$ and $$\beta_a(i)=\begin{cases}
        1&\text{if }i\in\{a,a+1, a+2, \ldots,a+n-1\};\\
        2&\text{if }i\in\{a+n,a+n+1,  \ldots,a-1\}.
    \end{cases}$$
    Since every 2-coloring of $T$ induces a partition of $V(T)$ into two sets of the form $\{a, a+1, a+2, \ldots,a+n\}$ and $\{a+n+1,a+n+2,  \ldots,a-1\}$, any 2-coloring of $T$ corresponds to $\alpha_a$ or $\beta_a$ for some $a\in \mathbb{Z}_{2n+1}$. Hence, $V(\mathcal{D}_2(T))=\{\alpha_a\colon a\in \mathbb{Z}_{2n+1}\}\cup \{\beta_a\colon a\in \mathbb{Z}_{2n+1}\}$.
    In the coloring $\alpha_a(i)$ the only vertices whose color can be changed are the vertices $a$ and $a+n$, thus the vertex $\alpha_a(i)$ has  degree 2 in the graph $\mathcal{D}_2(T)$. Analogously, $\beta_a(i)$ has degree 2 in the graph $\mathcal{D}_2(T)$.
    
\begin{claim}
\label{claim:neighbors}
\begin{sloppypar}
    For each $a\in \mathbb{Z}_{2n+1}$, $N_{\mathcal{D}_2(T)}(\alpha_a)=\{\beta_a,\beta_{a+1}\}$ and $N_{\mathcal{D}_2(T)}(\beta_a)=\{\alpha_{a-1},\alpha_{a}\}$.
\end{sloppypar}
    
    We have that $\alpha_a(a+n)=1\neq 2=\beta_a(a+n)$ and $\alpha_a(i)=\beta_a(i)$ whenever $i\neq a+n$, thus, $\alpha_a$ and $\beta_a$ are adjacent in $\mathcal{D}_2(T)$;  $\alpha_a(a)=1\neq 2=\beta_{a+1}(a)$ and $\alpha_a(i)=\beta_{a+1}(i)$ whenever $i\neq a$, thus, $\alpha_a$ and $\beta_{a+1}$ are adjacent in $\mathcal{D}_2(T)$.  
    Hence, $\{\beta_a,\beta_{a+1}\}\subseteq N_{\mathcal{D}_2(T)}(\alpha_a)$ and analogously, $\{\alpha_{a-1},\alpha_{a}\}\subseteq N_{\mathcal{D}_2(T)}(\beta_a)$. 
    Since each vertex in $\mathcal{D}_2(T)$ has degree equal to 2, it follows that $ N_{\mathcal{D}_2(T)}(\alpha_a)=\{\beta_a,\beta_{a+1}\}$ and $N_{\mathcal{D}_2(T)}(\beta_a)=\{\alpha_{a-1},\alpha_{a}\}$.  \vspace{1.7mm}
\end{claim}

From Claim \ref{claim:neighbors}, $W=\beta_0\alpha_0\beta_1\alpha_1\cdots \beta_{2n}\alpha_{2n}\beta_0$ is a Hamiltonian cycle in $\mathcal{D}_2(T)$  and each vertex in $\mathcal{D}_2(T)$ has degree 2. Therefore,  $\mathcal{D}_2(T)$ is a cycle of length $4n+2$.
\end{proof}

\begin{theorem}
    \label{theorem cyclic circulant tournament}
    Let $k\geq 3$. Then $\vec{C}_{2n+1}\langle \emptyset \rangle$ is $k$-mixing and $\mathcal{D}_k(\vec{C}_{2n+1}\langle \emptyset \rangle)$ has diameter at most $4n+1+\lfloor \frac{n+1}{2} \rfloor$. 
\end{theorem}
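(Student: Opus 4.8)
The plan is to exploit the rigid structure of acyclic sets in $T=\vec{C}_{2n+1}\langle\emptyset\rangle$ and reduce every $k$-coloring to one of the canonical $2$-colorings $\alpha_a$, whose redicolorings are already understood from Proposition~\ref{2-recoloring graph of the cyclic circulant tournament}. First I would record the geometric consequence of Theorem~\ref{teo: Neumann-Lara}: every acyclic subset of $T$, hence every color class of any $k$-coloring, is contained in an interval $\{a,a+1,\dots,a+n\}$ of length $n+1$; equivalently, an acyclic set must leave a gap of at least $n$ consecutive vertices. This lifts the ``three vertices'' condition of Theorem~\ref{teo: Neumann-Lara} to whole color classes by a short gap argument (if every gap had size $\le n$, three suitably spaced vertices would close a directed triangle). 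This ``arc'' description is the only geometric fact I will use, and it makes the transitivity of partial intervals available to certify acyclicity after each single recoloring.

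The proof then splits into two phases. In Phase 1 I would show that any $k$-coloring $\alpha$ can be transformed into some canonical $2$-coloring $\alpha_a$, using colors $1,2$ only, in at most about $2n$ steps: a single sweep around $\mathbb{Z}_{2n+1}$ that installs a target arc $\{a,\dots,a+n\}$ as class $1$ and its complement as class $2$, recoloring each vertex to its target color in an order that keeps both classes genuine sub-arcs, while a spare color (available since $k\ge 3$) parks the few vertices that would momentarily close a monochromatic cycle. In Phase 2 I would connect the canonical $2$-colorings among themselves: in $\mathcal{D}_2$ they form the cycle $C_{4n+2}$ of Proposition~\ref{2-recoloring graph of the cyclic circulant tournament}, of diameter $2n+1$, but with a third color one can push the boundary of a $2$-coloring across its \emph{shorter} arc roughly twice as fast, so that I expect the mutual distance of any two $2$-colorings in $\mathcal{D}_k(T)$ to be at most $1+\lfloor\frac{n+1}{2}\rfloor$. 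Connectivity of $\mathcal{D}_k(T)$ is then immediate, and the diameter bound follows from $d(\alpha,\beta)\le d(\alpha,\alpha_a)+d(\alpha_a,\alpha_b)+d(\alpha_b,\beta)$, where $\alpha_a,\alpha_b$ are the $2$-colorings reached from $\alpha$ and $\beta$; summing the phase bounds gives $2n+(1+\lfloor\frac{n+1}{2}\rfloor)+2n=4n+1+\lfloor\frac{n+1}{2}\rfloor$.

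The main obstacle is Phase 1: arranging the sweep so that (i) every intermediate coloring is acyclic and (ii) the total number of recolorings stays within budget. The danger is that an unprocessed vertex still carrying color $1$ or $2$ forms a directed triangle with the partial target arc, which by Theorem~\ref{teo: Neumann-Lara} happens precisely when the three vertices do not lie in a common interval of length $n+1$. The interval description is exactly what lets me control this: by starting the sweep at a boundary of the target arc and processing vertices in cyclic order, the processed classes are always initial sub-arcs of $\{a,\dots,a+n\}$ and of its complement, and the spare color is used only on the vertices whose original color would violate acyclicity, each recolored at most twice. Once only a bounded number of vertices remain misplaced, Lemmas~\ref{lemma: k colors k vertices} and~\ref{lemma: k colors k+1 vertices}, together with Corollary~\ref{corollary: n-mixing}, can absorb the endgame and help keep the constant tight.

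Finally I would verify the constant carefully, since it is the quantitative heart of the statement. The floor term $\lfloor\frac{n+1}{2}\rfloor$ should emerge from halving the length of the arc across which a $2$-coloring boundary must travel in Phase 2, and the two copies of $2n$ from the two Phase 1 reductions. The extremal case is a pair of colorings whose nearest canonical $2$-colorings sit at cyclic distance about $2n+1$ on $C_{4n+2}$, and the bulk of the remaining work is checking that the sweep never exceeds the claimed number of steps on such inputs, so that the three summands do not accumulate beyond $4n+1+\lfloor\frac{n+1}{2}\rfloor$.
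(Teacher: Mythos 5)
Your Phase 2 claim is false, and this breaks the whole decomposition. Any single step in $\mathcal{D}_k(T)$ recolors exactly one vertex, so the distance between two colorings is always at least their Hamming distance, regardless of how many spare colors are available. Now take two canonical $2$-colorings at shift $n$: $\alpha_a$ (color $1$ on $\{a,\dots,a+n\}$) and $\alpha_{a+n}$ (color $1$ on $\{a+n,\dots,a+2n\}$). These disagree on every vertex except $a+n$, i.e.\ on $2n$ vertices, so $d(\alpha_a,\alpha_{a+n})\geq 2n$ in $\mathcal{D}_k(T)$ for every $k$. Your claimed bound of $1+\lfloor\frac{n+1}{2}\rfloor$ for the mutual distance of canonical $2$-colorings is therefore impossible; the intuition that a third color lets the boundary move ``twice as fast'' cannot be right, because the $\mathcal{D}_2$-cycle already realizes the Hamming lower bound exactly ($2$ steps per unit shift), leaving no room for improvement. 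With the correct middle-leg bound (up to $2n$), your three summands give roughly $2n+2n+2n=6n$, which proves $k$-mixing but not the stated diameter; and the failure is intrinsic to the strategy, not to the bookkeeping, since when $\alpha=\alpha_a$ and $\beta=\alpha_{a+n}$ are themselves canonical, the entire cost sits in the middle leg.

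The paper's proof avoids this trap by being asymmetric: it never routes both colorings to a common canonical family. Instead it transforms $\alpha$ alone until it literally equals $\beta$: first it produces an empty color class in $\alpha$ (if two classes share an interval $\{a,\dots,a+n\}$, merge the smaller into the larger, at most $\lfloor\frac{n+1}{2}\rfloor$ steps; otherwise first grow one class to fill an interval, at most $n$ steps, which forces the remaining classes into the complementary $n$-interval), and then, having a free color, it overwrites the interval containing $C_k^\beta$ with that color and recolors every remaining vertex directly to its $\beta$-color, at most $3n+1$ further steps. Because the target of the final phase is $\beta$ itself rather than a canonical coloring near $\beta$, there is no long middle leg to pay for, and the costs $n+\lfloor\frac{n+1}{2}\rfloor+3n+1$ add up to exactly the claimed bound. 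If you want to salvage your write-up, replace Phase 2 by this ``install $\beta$'s structure directly'' step; your Phase 1 sweep (which is essentially the paper's Cases 2 and 3) can stay.
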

\begin{proof}
Let $T=\vec{C}_{2n+1}\langle \emptyset \rangle$.
Since $T$ is 2-dichromatic, $\mathcal{D}_{k}(T)$ is defined for every $k\geq 2$. 
Let $k\geq 3$. By Theorem \ref{teo: Neumann-Lara}, for any $k$-coloring $\alpha$ of $T$, and any color class $C_r^\alpha$ with $r\in\{1,2,\ldots,k\}$, there exists an $a\in\mathbb{Z}_{2n+1}$, such that $C_r^\alpha\subseteq \{a,a+1,\ldots, a+n\}$. 
Let $\alpha, \beta \colon \mathbb{Z}_{2n+1}\to \{1,2,\ldots,k\}$ be two $k$-colorings of $T$. And let $C_1^\alpha$, $C_2^\alpha$, \ldots, $C_k^\alpha$ and $C_1^\beta$, $C_2^\beta$, \ldots, $C_k^\beta$ be the color classes of $\alpha$ and $\beta$, respectively.

\begin{case}
$C_i^\alpha=\emptyset\mbox{ for some }i\in \{1,2,\ldots, k\}.$
\end{case}

Suppose w.l.o.g. that $C_k^\alpha=\emptyset$. Let $b\in\mathbb{Z}_{2n+1}$, such that $C_k^\beta\subseteq \{b,b+1,\ldots, b+n\}$. Assume w.l.o.g. that $b=0$, then $C_k^{\beta}\subseteq \{0,1,2,\ldots, n\}$. Start at $\alpha$ and recolor each vertex in $\{0,1,2,\ldots, n\}$ with color $k$, this takes $n+1$ steps. Let $\alpha'$ be the current coloring.
Recolor, when needed, each vertex $u$ in $\{n+1,n+2,\ldots, 2n\}$ with $\beta(u)$. 
Finally, recolor each vertex $u$ in $\{0,1,2,\ldots, n\}\setminus C_k^\beta$ with $\beta(u)$. The total process takes at most $n+1+n+n-|C_k^\beta|\leq 3n+1$.

\begin{case} There are two color classes $C_i^\alpha$ and $C_j^\alpha$, with $\{i,j\}\subset\{1,2,\ldots, k\}$ and $i\neq j$, such that $C_i^\alpha \cup C_j^\alpha \subseteq \{a,a+1,\ldots,a+n\}$ for some $a\in \mathbb{Z}_{2n+1}$.
\end{case}

Suppose w.l.o.g. that $|C_i^\alpha|\geq |C_j^\alpha|$. Start at $\alpha$, then recolor all vertices in $C_j^\alpha$ with color $i$. This process takes at most $\lfloor \frac{n+1}{2} \rfloor$ steps, and the new coloring $\alpha'$ satisfies that $C_j^{\alpha'}=\emptyset$, and thus $d(\alpha',\beta)\leq 3n+1$, by Case 1. Hence, $d(\alpha,\beta)\leq d(\alpha,\alpha')+d(\alpha',\beta) \leq 3n+1+\lfloor \frac{n+1}{2} \rfloor$.

\begin{case} For each pair of color classes $C_i^\alpha$ and $C_j^\alpha$, with $\{i,j\}\subset\{1,2,\ldots, k\}$ and $i\neq j$, there is no $a\in \mathbb{Z}_{2n+1}$ such that $C_i^\alpha \cup C_j^\alpha \subseteq \{a,a+1,\ldots,a+n\}$.
\end{case}

Consider a color class $C_i^\alpha$, with $i\in\{1,2,\ldots, k\}$, such that $C_i^\alpha\neq \emptyset$. 
Let $a\in \mathbb{Z}_{2n+1}$ such that $C_i^\alpha \subseteq \{a,a+1,\ldots,a+n\}$. 
Assume w.l.o.g. that $a=0$. So, if we start at $\alpha$ we can recolor every vertex in $\{0,1,2\ldots,n\}\setminus C_i^\alpha$ with color $i$, this process takes at most $n$ steps. Let $\alpha'$ be the current coloring. 
We have that $\alpha'$ satisfies the hypothesis of Case 2, so $d(\alpha',\beta)\leq 3n+1+\lfloor \frac{n+1}{2} \rfloor$, and thus $d(\alpha,\beta)\leq d(\alpha,\alpha')+d(\alpha',\beta) \leq 4n+1+\lfloor \frac{n+1}{2} \rfloor$. 

\end{proof}

\section{The circulant tournament $\vec{C}_{2n+1}\langle n \rangle$}

In what follows we will show a series of results in order to prove our main result on $\vec{C}_{2n+1}\langle n \rangle$ tournaments:

\begin{theorem}
Let $n$ and $k$ be two integers, $n\geq 3$ and $ k\geq 3$, and consider the cyclic tournament $\vec{C}_{2n+1}\langle n \rangle$. Then either:
\begin{enumerate}[(i)]
    \item $\vec{C}_{7}\langle 3 \rangle$ is $k$-mixing and  $\mathcal{D}_{k}(\vec{C}_{7}\langle 3 \rangle)$ has diameter at most $8$,
    \item for $n\geq 5$, $\vec{C}_{2n+1}\langle n \rangle$ is $3$-mixing and $\mathcal{D}_{3}(\vec{C}_{2n+1}\langle n \rangle)$ has diameter at most $3n+4+\lfloor \frac{2n+1}{3} \rfloor$, 
    \item for $n\geq 4$, $k\geq 4$ and $k\neq \frac{2n+1}{3}$, $\vec{C}_{2n+1}\langle n \rangle$ is $k$-mixing and $\mathcal{D}_{k}(\vec{C}_{2n+1}\langle n \rangle)$ has diameter at most $4n+2+\lfloor \frac{n}{2} \rfloor$, or 
    \item for $n\geq 4$ and $k= \frac{2n+1}{3}$, $\vec{C}_{2n+1}\langle n \rangle$ is not $k$-mixing, $\mathcal{D}_{k}(\vec{C}_{2n+1}\langle n \rangle)$ has $3k!$ isolated vertices and the distance between two non-frozen colorings $\alpha$ and $\beta$ is at most $2n+1+\lfloor \frac{2n+1}{4}\rfloor$.
\end{enumerate}
\label{theorem: main on n}
\end{theorem}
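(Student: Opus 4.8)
The plan is to first reduce the theorem to a handful of structural facts about $T=\vec{C}_{2n+1}\langle n \rangle$ that play the role Theorem \ref{teo: Neumann-Lara} played for $\vec{C}_{2n+1}\langle \emptyset \rangle$. Concretely, I would prove that $\vec{\chi}(T)=3$ for every $n\geq 3$ (for $n=3$ this is the Paley tournament $ST_7$) and catalogue the acyclic sets of $T$. Reversing the jump $n$ turns the forward jump $n$ into the forward jump $n+1$, so $T$ has arcs $(a,a+j)$ with $j\in\{1,\dots,n-1,n+1\}$; the deletion $T-\{0\}$ is uniquely $2$-colorable, yet neither class of that $2$-coloring can absorb $0$ without closing a $3$-cycle, which already forces $\vec{\chi}(T)=3$. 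The only acyclic sets that are not contained in a consecutive interval $\{a,a+1,\dots,a+n-1\}$ of length $n$ are those that ``wrap around'' through a distance-$n$ pair, and I would describe these precisely. This local description of the color classes replaces the clean interval characterization and is, I expect, the main technical obstacle: unlike the symmetric case, the reversed arc breaks the interval structure and forces a case analysis on how a class can straddle the pairs $\{a,a+n\}$.

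With the characterization in hand, the connectivity statements (i)--(iii) follow the template of Theorem \ref{theorem cyclic circulant tournament}. Given two $k$-colorings $\alpha,\beta$, I would drive $\alpha$ toward a canonical coloring by first creating an empty or singular color class and then invoking the recoloring routines of Lemma \ref{lemma: k colors k vertices}, Lemma \ref{lemma: k colors k+1 vertices} and Corollary \ref{corollary: n-mixing}; the diameter bounds are then read off from the number of recoloring steps in each phase. The three regimes are separated because the available slack differs. For $k\geq 4$ and $k\neq\frac{2n+1}{3}$ (case (iii)) there is always a spare color in which to park vertices while re-aligning a class with an interval, which yields the bound $4n+2+\lfloor n/2\rfloor$. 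For $k=3$ and $n\geq 5$ (case (ii)) the argument is tighter: with only three colors one must consolidate a class that is spread around the cycle before a third color can be emptied, which accounts for the extra $\lfloor (2n+1)/3\rfloor$ term in $3n+4+\lfloor(2n+1)/3\rfloor$. The case $n=3$ (i) is small enough to treat by a direct, symmetry-assisted argument exploiting the vertex-transitivity of $ST_7$ to pin the diameter at exactly $8$.

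For the exceptional case (iv), where $k=\tfrac{2n+1}{3}$ (so $2n+1=3k$, forcing $k$ odd and $n\equiv 1\pmod 3$), I would exhibit the frozen colorings explicitly. The candidate rigid partition is $C_j=\{3j-1,\,3j,\,3j+n\}$ for $j\in\{0,1,\dots,k-1\}$: since $n\equiv 1\pmod 3$, each $C_j$ meets the residues $2,0,1\pmod 3$, so the $C_j$ tile $\mathbb{Z}_{2n+1}$, and each is transitive (up to rotation it is $\{0,n,2n\}$, with arcs $2n\to n$, $2n\to 0$ and $n\to 0$). This partition is invariant under the rotation $i\mapsto i+3$, so its orbit under the full rotation group has size $3$; together with the $k!$ permutations of the colors this gives exactly $3k!$ colorings. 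I would verify rigidity by checking that a representative vertex of each of the three residue classes cannot be moved to any other color class without closing a $3$-cycle (the stabilizer of the partition acts transitively on each residue class, so three checks suffice), and conclude that each such coloring is an isolated vertex of $\mathcal{D}_k(T)$.

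Two points remain and I expect them to carry the real weight of case (iv). First, \emph{completeness}: one must show these $3k!$ colorings are the \emph{only} frozen ones, i.e.\ that every other $k$-coloring has a recolorable vertex; I would argue that any coloring not of the rigid shape above contains a class with a vertex that can be shifted into a neighbouring interval while preserving acyclicity. Second, once the frozen colorings are removed, I would route every remaining coloring to a fixed canonical non-frozen coloring within a budget of roughly $n+\lfloor(2n+1)/4\rfloor$ recoloring steps, so that any two non-frozen colorings lie in one common component at distance at most $2n+1+\lfloor(2n+1)/4\rfloor$. The delicate part throughout is the bookkeeping that keeps every intermediate coloring acyclic while meeting the stated diameters, which is exactly where the explicit acyclic-set catalogue from the first step is used repeatedly.
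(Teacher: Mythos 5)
Your overall architecture matches the paper's: structural lemmas cataloguing the acyclic sets of $T=\vec{C}_{2n+1}\langle n\rangle$ (the paper's Lemmas \ref{lemma: acyclic triangle in circulant tournament with (n+1,0)} and \ref{lemma: n volteada aciclico de n}), a reduction of arbitrary colorings to canonical ones following the template of Theorem \ref{theorem cyclic circulant tournament} for parts (ii)--(iii) (carried out in the paper via Lemmas \ref{lemma: extend a color class} and \ref{lemma: grafica de recoloracion conexa para k=3 y n volteada 1} and Theorems \ref{teo: grafica de recoloracion conexa para k=3 y n volteada} and \ref{theorem: D_k(n) k not multiple of 3}), and, for part (iv), exactly the paper's frozen colorings: your rigid classes $\{3j-1,3j,3j+n\}$ are the forbidden triangles $\{i,i+n,i+n+1\}$ (take $i=3j+n$), and your count ($3$ rotations times $k!$ color permutations) and your maximality-based rigidity check are the paper's argument.

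There are, however, genuine gaps. The most serious is part (i). Your plan for $\vec{C}_{7}\langle 3\rangle$ --- ``small enough to treat by a direct, symmetry-assisted argument exploiting vertex-transitivity'' --- is not a method, and this is precisely where the paper could not produce a hand argument: it establishes the diameter-$8$ bound for $k\in\{3,4,5,6\}$ by exhaustive computer search over the dicoloring graphs (which have up to $199{,}080$ vertices), and then needs a separate, nontrivial case analysis (via Lemmas \ref{lemma: k colors k vertices} and \ref{lemma: k colors k+1 vertices} and the sets of colors actually used by the two colorings) to extend the bound $8$ to all $k\geq 7$. Note that the generic machinery you invoke cannot yield $8$: instantiating your own bounds from (ii) or (iii) at $n=3$ gives $15$ in both cases, so for (i) the real work is entirely unaccounted for. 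Second, in (iv) your budget does not close: routing each non-frozen coloring to a fixed canonical one in $n+\lfloor\frac{2n+1}{4}\rfloor$ steps only bounds the distance between two non-frozen colorings by $2n+2\lfloor\frac{2n+1}{4}\rfloor$, which exceeds the claimed $2n+1+\lfloor\frac{2n+1}{4}\rfloor$ for every $n\geq 4$; the paper instead bounds the distance directly by rerunning the case analysis of Theorem \ref{theorem: D_k(n) k not multiple of 3} on the pair $\rho,\sigma$. Finally, a conceptual slip in (iii): the hypothesis $k\neq\frac{2n+1}{3}$ does not provide ``a spare color in which to park vertices'' (all $k$ classes may well be nonempty); its role is to guarantee that some color class is \emph{not} a forbidden triangle, so that the extension lemma (the paper's Lemma \ref{lemma: extend a color class}) applies --- exactly the obstruction your own part (iv) identifies. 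As written, your proof of (iii) would stall on colorings in which every class contains a pair $\{a,a+n+1\}$, which is the paper's Case 3.2.
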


Theorem \ref{theorem: main on n} follows from Theorems \ref{teo: grafica de recoloracion conexa para k=3 y n volteada}, \ref{theorem: ST7}, \ref{theorem: D_k(n) k not multiple of 3} and \ref{theorem: D_k(n) k a multiple of 3}. In order to prove those theorems, we need the following four lemmas:

\begin{lemma}
    \label{lemma: acyclic triangle in circulant tournament with (n+1,0)}
    Let $\vec{C}_{2n+1}\langle n \rangle$ and $i\in \mathbb{Z}_{2n+1}$, then the subset $\{i, i+n, i+n+1\}$ of $\mathbb{Z}_{2n+1}$ induces a maximal acyclic subdigraph. 
\end{lemma}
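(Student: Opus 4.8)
The plan is to exploit the vertex-transitivity of the circulant tournament to fix $i=0$, so that the set in question is $\{0,n,n+1\}$, and then to (a) verify this triple induces a transitive (hence acyclic) triangle, and (b) show that appending any fourth vertex produces a directed $3$-cycle.

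First I would record the arc set of $T:=\vec{C}_{2n+1}\langle n\rangle$ explicitly. The jumps $1,\dots,n-1$ are untouched, giving arcs $(a,a+j)$ for $j\in\{1,\dots,n-1\}$, while reversing jump $n$ replaces $(a,a+n)$ by $(a,a-n)$; since $-n\equiv n+1\pmod{2n+1}$, the reversed arcs are exactly $(a,a+n+1)$. Applying this to the pairs inside $\{0,n,n+1\}$ yields $n\to 0$ (reversed jump with $a=n$), $0\to n+1$ (reversed jump with $a=0$), and $n\to n+1$ (jump $1$). These three arcs form the transitive tournament with source $n$, middle $0$, and sink $n+1$, so $\{0,n,n+1\}$ is acyclic, which settles one half of the claim.

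For maximality I would take an arbitrary $v\in\mathbb{Z}_{2n+1}\setminus\{0,n,n+1\}$ and exhibit a directed triangle inside $\{0,n,n+1,v\}$, splitting on the two residue ranges. For $v\in\{1,\dots,n-1\}$ the arcs $0\to v$ (jump $v$) and $v\to n$ (jump $n-v$), together with $n\to 0$, close the cycle $0\to v\to n\to 0$. For $v\in\{n+2,\dots,2n\}$ the arcs $v\to 0$ (jump $2n+1-v$) and $n+1\to v$ (jump $v-(n+1)$), together with $0\to n+1$, close the cycle $v\to 0\to n+1\to v$. The pleasant point, which is also the crux, is that in both cycles every arc incident with $v$ uses a jump lying in $\{1,\dots,n-1\}$ — hence unaffected by the reversal — so the only reversed arcs needed are the two already present inside the triangle. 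This lets me avoid a separate analysis of the boundary vertices $v=1$ and $v=2n$ (where the $v$--$n$, respectively $v$--$(n+1)$, arc does involve the reversed jump), since those particular arcs never enter the chosen cycles.

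The main obstacle is simply the bookkeeping of arc orientations under the reversal, and in particular not mis-assigning the two reversed arcs; once the explicit arc description above is in hand, the two uniform $3$-cycles cover every admissible $v$ and the argument closes. A final sentence would invoke vertex-transitivity to transport the conclusion from $i=0$ to arbitrary $i$, completing the proof that $\{i,i+n,i+n+1\}$ induces a maximal acyclic subdigraph.
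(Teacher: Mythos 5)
Your proof is correct and follows essentially the same route as the paper's: the same transitive triangle with source $i+n$ (your $n$), middle $i$, sink $i+n+1$, and for maximality the same two directed triangles $(i,v,i+n,i)$ for $v$ in the first range and $(i+n+1,v,i,i+n+1)$ for $v$ in the second. The only cosmetic differences are that you normalize to $i=0$ via vertex-transitivity and spell out the jump bookkeeping more explicitly, neither of which changes the argument.
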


\begin{proof}
    Let $T=\vec{C}_{2n+1}\langle n \rangle$.
    By the definition of $T$, we have that $(i+n,i)$, $(i+n, i+n+1)$, and $(i,i+n+1)$ are arcs in $T$. 
    Hence, the subdigraph induced by $\{i, i+n, i+n+1\}$ is acyclic.

    Now take $j$ in $\mathbb{Z}_{2n+1}\setminus \{i, i+n, i+n+1\}$ and let $D$ be the subdigraph of $T$ induced by $\{j, i, i+n, i+n+1\}$: 
    if $j\in\{i+1,1+2,\ldots, i+n-1\}$, then $(i,j)$ and $(j,i+n)$ are arcs in $D$, and thus $(i,j,i+n,i)$ is a cycle in $D$; and if $j\in\{i+n+2,i+n+3,\ldots, i-1\}$, then $(i+n+1,j)$ and $(j,i)$ are arcs in $D$, and thus $(i+n+1,j,i,i+n+1)$ is a cycle in $D$.   
\end{proof}

\begin{lemma}
\label{lemma: n volteada aciclico de n}
    If $V\subseteq \mathbb{Z}_{2n+1}$ induces an acyclic tournament in $\vec{C}_{2n+1}\langle n \rangle$, then $|V|\leq n$. Moreover, if $|V|=n$, then, for some $a\in\mathbb{Z}_{2n+1}$,  $V=\{a,a+1,\ldots, a+n-1\}$ or $V=\{a,a+1,\ldots, a+n+1\}\setminus\{a+1,a+n\}$. 
\end{lemma}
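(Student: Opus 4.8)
The plan is to use the standard fact that a tournament is acyclic if and only if it contains no directed triangle, so that everything reduces to understanding the $3$-cycles of $\vec{C}_{2n+1}\langle n \rangle$. First I would record the out-neighbourhood of a vertex: by definition the jumps $1,\dots,n-1$ are kept and the jump $n$ is reversed, i.e. replaced by the arcs $(a,a-n)=(a,a+n+1)$, so $N^+(0)=\{1,2,\dots,n-1\}\cup\{n+1\}$ while $N^-(0)=\{n\}\cup\{n+2,\dots,2n\}$. Since $x\mapsto x+1$ is an automorphism, $\vec{C}_{2n+1}\langle n \rangle$ is vertex-transitive, so I may freely rotate any chosen vertex to $0$. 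I would also note the triangle criterion: a triple of vertices forms a directed triangle exactly when its three cyclic gaps all lie in the forward set $\{1,\dots,n-1,n+1\}$, the reversed orientation being impossible because three gaps each at least $n$ cannot sum to $2n+1$.

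For the bound $|V|\le n$, suppose $V$ induces an acyclic, hence transitive, tournament. Then $V$ has a source $s$ dominating all other vertices of $V$, so $V\setminus\{s\}\subseteq N^+(s)$ and $|V|\le n+1$. Rotating so that $s=0$ gives $V\subseteq S:=\{0,1,\dots,n-1,n+1\}$. To rule out $|V|=n+1$ it is enough to see that $S$ itself is not acyclic: the triple $\{1,n-1,n+1\}$ carries the arcs $1\to n-1\to n+1\to 1$ (jumps $n-2$, $2$, and the reversed jump $n+1$), a directed triangle for every $n\ge 3$. Hence $|V|\le n$.

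For the characterisation of the sets of size exactly $n$, after rotating the source to $0$ I have $V=S\setminus\{t\}$ for a unique $t\in N^+(0)$, and $V$ is acyclic precisely when deleting $t$ destroys every triangle of $S$. The key step is to enumerate the triangles of $S$. Any triple inside the block $B=\{0,1,\dots,n-1\}$ is transitive, since $B$ is an interval of $n$ consecutive vertices in which all jumps are forward; hence every triangle uses $n+1$. Writing such a triangle as $\{x,y,n+1\}$ with $x<y$ in $B$ and computing its three cyclic gaps $y-x,\ n+1-y,\ n+x$, the triangle criterion forces $x=1$ and $y\in\{2,\dots,n-1\}$. Thus the triangles of $S$ are exactly the $\{1,y,n+1\}$ with $2\le y\le n-1$, and each contains both $1$ and $n+1$. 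Therefore $S\setminus\{t\}$ is acyclic exactly for $t\in\{1,n+1\}$: for $n\ge 4$ any middle deletion $t\in\{2,\dots,n-1\}$ leaves a surviving triangle $\{1,y,n+1\}$ with $y\ne t$. This yields $V=\{0,1,\dots,n-1\}$ when $t=n+1$ and $V=\{0,2,3,\dots,n-1,n+1\}=\{0,1,\dots,n+1\}\setminus\{1,n\}$ when $t=1$; undoing the rotation replaces $0$ by an arbitrary $a$, giving the two announced families.

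The arc-direction bookkeeping and the single gap computation are routine. The main obstacle is the triangle enumeration in $S$, namely verifying that the triangles are precisely the $\{1,y,n+1\}$ and that only the two extreme deletions $t\in\{1,n+1\}$ leave an acyclic remainder; this is exactly where the hypothesis $n\ge 4$ is genuinely needed, since for $n=3$ there is a single triangle and a middle deletion produces an extra acyclic set not of either announced form.
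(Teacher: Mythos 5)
Your proof is correct and follows essentially the same route as the paper's: bound $|V|\le n+1$ via the source, rule out size $n+1$ with the triangle $\{a+1,a+n-1,a+n+1\}$, and obtain the two forms from the triangles $\{a+1,a+j,a+n+1\}$ (your $\{1,y,n+1\}$), only packaged as a full enumeration of the triangles of $S=\{0,1,\ldots,n-1,n+1\}$ instead of the paper's direct case analysis on whether $a+n+1\in V$. Your closing caveat is also accurate and worth flagging: the paper states the lemma with no lower bound on $n$, but its own argument (which needs some $a+j\in V$ with $1<j<n$) silently requires $n\ge 4$ as well, and indeed for $n=3$ the set $\{0,1,4\}$ is acyclic in $\vec{C}_{7}\langle 3\rangle$ yet of neither announced form.
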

\begin{proof}
    Let $T=\vec{C}_{2n+1}\langle n \rangle$.
    To prove the first assertion, consider a set $V\subseteq \mathbb{Z}_{2n+1}$ which induces an acyclic tournament $D$ in $T$. 
    Then $V$ contains a source $a$, and thus $|V|\leq |\{a\}\cup N^+(a)|=n+1$. 
    Since $(a+1,a+n-1,a+n+1,a+1)$ is a directed triangle in $D$, it follows that $|V|\leq n$.

    Now, assume $|V|=n$. If $a+n+1\notin V$, then $V=\{a,a+1,\ldots,a+n-1\}$. If  $a+n+1\in V$, then $a+1\notin V$ or $(a+1,a+j,a+n+1,a+1)$ would be a directed triangle in $D$ for $1<j<n$. In this way, $V=\{a,a+1,\ldots,a+n+1\}\setminus\{a+1,a+n\}$.
\end{proof}

We call a subset $\{i, i+n, i+n+1\}$ of $\mathbb{Z}_{2n+1}$ in $\vec{C}_{2n+1}\langle n \rangle$ a \textit{forbidden triangle}. 

\begin{lemma}
    Let $n$ and $k$ be two integers, with $n\geq 4$ and $k\geq 3$. Let $\alpha\colon \mathbb{Z}_{2n+1}\to \{1,2,\ldots, k\}$ be a coloring of $\vec{C}_{2n+1}\langle n \rangle$. 
    If there is a color class $C_j^\alpha$ of $\alpha$ which is not a forbidden triangle,
    then $\alpha$ can be transformed into a coloring $\beta\colon \mathbb{Z}_{2n+1}\to \{1,2,\ldots, k\}$ such that its $j$th color class, $C_j^\beta$, is $C_j^\beta=\{a,a+1,a+2,\ldots,a+n-1\}$ for some $a\in \mathbb{Z}_{2n+1}$. 
    Moreover, in $\mathcal{D}_{k}(\vec{C}_{2n+1}\langle n \rangle)$, the colorings $\alpha$ and $\beta$ are at distance  at most $n-|C_j^\alpha|$ whenever $|C_j^\alpha|\in \{0,1\}$, and at most $n+2-|C_j^\alpha|$ otherwise.
    \label{lemma: extend a color class}
\end{lemma}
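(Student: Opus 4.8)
The plan is to reduce everything to one clean subroutine: \emph{growing a color class inside an interval}. The key observation is that any block $\{a,a+1,\dots,a+n-1\}$ (indeed any set of at most $n$ consecutive vertices) induces a transitive, hence acyclic, subtournament, so each of its subsets is acyclic. Consequently, whenever the color-$j$ class $X$ is contained in such an interval $I$ with $|I|=n$, I can recolor the vertices of $I\setminus X$ to color $j$ one at a time: every recoloring keeps class $j$ a subset of $I$ (so acyclic) and merely deletes a vertex from another class, which never destroys acyclicity. Thus class $j$ is driven to equal $I$ in exactly $n-|X|$ steps, with no feasibility issues whatsoever. The very same argument works inside a set $B=\{a,a+2,\dots,a+n-1,a+n+1\}$, since its subsets are acyclic too.

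Next I would pin down where $S:=C_j^\alpha$ can live. If $m:=|S|=0$ I grow from any interval in $n$ steps; otherwise let $s$ be the source of the transitive subtournament on $S$. Since every other vertex of $S$ is an out-neighbour of $s$ and $N^+(s)=\{s+1,\dots,s+n-1,s+n+1\}$, we get $S\subseteq\{s,s+1,\dots,s+n-1,s+n+1\}$. If $s+n+1\notin S$, then $S$ lies in the interval $\{s,\dots,s+n-1\}$ and the subroutine finishes in $n-m$ steps (this already covers $m\in\{0,1\}$ and gives the bound $n-m$). If $s+n+1\in S$, then necessarily $s+1\notin S$: otherwise, by the triangles $(s+1,s+j,s+n+1)$ used in the proof of Lemma \ref{lemma: n volteada aciclico de n}, no $s+j$ with $1<j<n$ could belong to $S$, forcing $S=\{s,s+1,s+n+1\}$, which is the forbidden triangle with apex $s+n+1$ and contradicts the hypothesis. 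Hence $S\subseteq\{s,s+2,\dots,s+n-1,s+n+1\}$, a Type B set.

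In this last case I would first grow class $j$ inside the Type B set up to the full $B=\{s,s+2,\dots,s+n-1,s+n+1\}$, which costs $n-m$ steps, and then convert $B$ into an interval in two further steps: recolor the source $s$ to another color, and then recolor $s+n$ to color $j$. Removing $s$ first leaves class $j=\{s+2,\dots,s+n-1,s+n+1\}$, a subset of the interval $\{s+2,\dots,s+n+1\}$, so adding $s+n$ afterwards is automatically valid and produces exactly that interval. Every intermediate class stays acyclic, and the whole process uses $n-m+2=n+2-m$ recolorings, matching the claim.

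The hard part is the unique step in which a vertex must \emph{leave} class $j$, namely recoloring the source $s$ once $B$ is full; this is precisely where $k\ge 3$ and the exclusion of forbidden triangles enter. I would show it cannot get stuck as follows: the complement $R=\mathbb{Z}_{2n+1}\setminus B=\{s+1,s+n,s+n+2,\dots,s+2n\}$ meets $N^+(s)$ only in $s+1$, so any pair of vertices of $R$ forming a directed triangle with $s$ must contain $s+1$. Hence at most one color class (the one holding $s+1$) can block $s$, and since $k\ge 3$ supplies at least two classes other than $j$, some class $C_c$ with $s+1\notin C_c$ admits $s$, i.e. $C_c\cup\{s\}$ is acyclic. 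The only remaining routine points are verifying this ``unique out-neighbour in the complement'' fact and checking that the degenerate small instances (such as $m=2$ with $S=\{s,s+n+1\}$) are subsumed by the Type B analysis.
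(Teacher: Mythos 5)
Your proof is correct, and while it shares the paper's overall skeleton, it resolves the one genuinely delicate step by a different and cleaner argument. Both proofs locate $S=C_j^\alpha$ inside $\{s\}\cup N^+(s)$ via the source $s$, grow the class greedily inside either the interval $\{s,\dots,s+n-1\}$ or the set $B=\{s,s+2,\dots,s+n-1,s+n+1\}$ (each step trivially valid since these sets induce transitive subtournaments), and in the latter case spend two extra moves to turn $B$ into an interval, giving the same bounds $n-|S|$ and $n+2-|S|$. The difference is which vertex is ejected from the full set $B$ and how its new color is found. The paper ejects the apex $a+n+1$ and inserts $a+1$; to recolor the apex it must split into $k\geq 4$ (choose a color avoiding $\alpha(a+1)$ and $\alpha(a+n)$) and $k=3$, where it needs further sub-cases and a monochromatic-cycle contradiction to pin down $\alpha(a+2n)$. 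You eject the source $s$ and insert $s+n$, and your feasibility argument is uniform in $k$: since $s+1$ is the unique out-neighbour of $s$ in the complement $R=\mathbb{Z}_{2n+1}\setminus B$, every color class avoiding $s+1$ sees $s$ as a sink, so at most one of the $k-1\geq 2$ other classes can block $s$. A side benefit is that your Type B case absorbs the antipodal pair $S=\{s,s+n+1\}$ directly, whereas the paper's Case 3 needs a separate preliminary move (recoloring $a+n+2$) before re-entering its Case 1. In short, your route buys a $k$-independent argument and a smaller case tree; the paper's route proves the same bounds but pays for ejecting the apex with the $k=3$ analysis. The remaining points you flag as routine (the unique-out-neighbour fact, and that intervals and $B$ are transitive) do check out against the structure of $\vec{C}_{2n+1}\langle n\rangle$ and Lemma \ref{lemma: n volteada aciclico de n}.
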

\begin{proof}
    Let $T=\vec{C}_{2n+1}\langle n \rangle$.
    Since $T$ is 3-dichromatic, $\mathcal{D}_{k}(T)$ is defined for every $k\geq 3$. 
    Let $C_1^\alpha$, $C_2^\alpha$, \ldots, $C_k^\alpha$ be the color classes of $\alpha$. 
    And, suppose w.l.o.g. that $|C_1^\alpha|$ is not a forbidden triangle.

\begin{case}
    $|C_1^\alpha|\geq 3$.
\end{case}
    Then the subdigraph of $T$ induced by $C_1^\alpha$, say $H$, is a transitive tournament, and thus it has a source. Let $a$ be the source of $H$, then $V(H)\subseteq \{a\}\cup N^+(a)=\{a,a+1,a+2,\ldots,a+n+1\}\setminus \{a+n\}$. Notice that $C_1^\alpha$ cannot contain a forbidden triangle. Hence, $\{a+n+1, a+2n+1, a+2n+2\}=\{a+n+1, a, a+1\}\not \subseteq V(H)$ and thus $|\{a+n+1, a+1\}\cap V(H)|\leq 1$. 
 
    If $a+n+1\notin V(H)$, then recolor one by one all vertices in $\{a,a+1,a+2,\ldots,a+n-1\}\setminus V(H)$ with color $1$. The current coloring $\beta$ satisfies $d(\alpha, \beta)\leq n-|C_1^\alpha|$.
    
    So, we assume that $a+n+1\in V(H)$.
    Recolor one by one all vertices in $\{a+2,a+3,\ldots,a+n-1\}\setminus V(H)$ with color $1$, this process takes at most $n-2-(|C_1^\alpha|-2)=n-|C_1^\alpha|$ vertex recolorings. 

    If $k\geq 4$, then let $j$ be a color in $\{1,2,\ldots, k\}\setminus\{1,\alpha(a+1),\alpha(a+n)\}$, then $C_j^\alpha\subseteq \{a+n+2,a+n+3,\ldots,a+2n\}$. By Lemma \ref{lemma: n volteada aciclico de n}, the subdigraph of $T$ induced by $\{a+n+1,a+n+2, a+n+3, \ldots, a+2n\}$ is a transitive tournament. Hence, we recolor vertex $a+n+1$ with color $j$. Next, recolor vertex $a+1$ with color $1$. The current coloring $\beta$ satisfies $d(\alpha,\beta)\leq n-|C_1^\alpha|+2=n+2-|C_1^\alpha|$ from $\alpha$.
    
    If $k=3$.
    
    Whenever $\alpha(a+1)=\alpha(a+n)=l$, let $h$ be the color in $\{1,2,3\}\setminus \{1,l\}$. Then $C_h^\alpha\subseteq \{a+n+2, a+n+3, \ldots, a+2n\}$. So, we recolor vertex $a+n+1$ with color $h$ and then recolor vertex $a+1$ with color $1$. The current coloring $\beta$ satisfies $d(\alpha,\beta)\leq n-|C_1^\alpha|+2=n+2-|C_1^\alpha|$ (see Figure \ref{figure: lemma 5 case 1.1}).

    \begin{figure}
\begin{center}
\begin{tikzpicture}[x=1cm,y=8mm, scale=0.9]
\draw[color=black] (-8.5,4) node {$\alpha\colon$};
\draw[color=black] (-8.5,1) node {$\beta\colon$};
\begin{scriptsize}
\draw [fill=ForestGreen] (-8,4) circle (2.5pt);
\draw[color=ForestGreen] (-8,4.43) node {$a$};
\draw [fill=red] (-7,4) circle (2.5pt);
\draw[color=red] (-7,4.43) node {$a+1$};
\draw [fill=black] (-6,4) circle (2.5pt);
\draw[color=black] (-6,4.43) node {$a+2$};
\draw[color=black] (-5,4) node {$\ldots$};
\draw [fill=ForestGreen] (-4,4) circle (2.5pt);
\draw[color=ForestGreen] (-4,4.43) node {$a+i$};
\draw[color=black] (-3,4) node {$\ldots$};
\draw [fill=black] (-2,4) circle (2.5pt);
\draw[color=black] (-2.1,4.43) node {$a+n-1$};
\draw [fill=red] (-1,4) circle (2.5pt);
\draw[color=red] (-1,4.43) node {$a+n$};
\draw [fill=ForestGreen] (0,4) circle (2.5pt);
\draw[color=ForestGreen] (0.1,4.43) node {$a+n+1$};
\draw[color=black] (1,4) node {$\ldots$};
\draw [fill=blue] (2,4) circle (2.5pt);
\draw[color=blue] (2,4.43) node {$a+n+i'$};
\draw [fill=black] (3,4) node  {$\ldots$};
\draw [fill=black] (4,4) circle (2.5pt);
\draw[color=black] (4,4.43) node {$a+2n$};
\draw [fill=ForestGreen] (-8,3) circle (2.5pt);
\draw[color=ForestGreen] (-8,3.43) node {$a$};
\draw [fill=red] (-7,3) circle (2.5pt);
\draw[color=red] (-7,3.43) node {$a+1$};
\draw [fill=ForestGreen] (-6,3) circle (2.5pt);
\draw[color=ForestGreen] (-6,3.43) node {$a+2$};
\draw[color=black] (-5,3) node {$\ldots$};
\draw [fill=ForestGreen] (-4,3) circle (2.5pt);
\draw[color=ForestGreen] (-4,3.43) node {$a+i$};
\draw[color=black] (-3,3) node {$\ldots$};
\draw [fill=ForestGreen] (-2,3) circle (2.5pt);
\draw[color=ForestGreen] (-2.1,3.43) node {$a+n-1$};
\draw [fill=red] (-1,3) circle (2.5pt);
\draw[color=red] (-1,3.43) node {$a+n$};
\draw [fill=ForestGreen] (0,3) circle (2.5pt);
\draw[color=ForestGreen] (0.1,3.43) node {$a+n+1$};
\draw[color=black] (1,3) node {$\ldots$};
\draw [fill=blue] (2,3) circle (2.5pt);
\draw[color=blue] (2,3.43) node {$a+n+i'$};
\draw [fill=black] (3,3) node {$\ldots$};
\draw [fill=black] (4,3) circle (2.5pt);
\draw[color=black] (4,3.43) node {$a+2n$};
\draw [fill=ForestGreen] (-8,2) circle (2.5pt);
\draw[color=ForestGreen] (-8,2.43) node {$a$};
\draw [fill=red] (-7,2) circle (2.5pt);
\draw[color=red] (-7,2.43) node {$a+1$};
\draw [fill=ForestGreen] (-6,2) circle (2.5pt);
\draw[color=ForestGreen] (-6,2.43) node {$a+2$};
\draw[color=black] (-5,2) node {$\ldots$};
\draw [fill=ForestGreen] (-4,2) circle (2.5pt);
\draw[color=ForestGreen] (-4,2.43) node {$a+i$};
\draw[color=black] (-3,2) node {$\ldots$};
\draw [fill=ForestGreen] (-2,2) circle (2.5pt);
\draw[color=ForestGreen] (-2.1,2.43) node {$a+n-1$};
\draw [fill=red] (-1,2) circle (2.5pt);
\draw[color=red] (-1,2.43) node {$a+n$};
\draw [fill=blue] (0,2) circle (2.5pt);
\draw[color=blue] (0.1,2.43) node {$a+n+1$};
\draw[color=black] (1,2) node {$\ldots$};
\draw [fill=blue] (2,2) circle (2.5pt);
\draw[color=blue] (2,2.43) node {$a+n+i'$};
\draw [fill=black] (3,2) node {$\ldots$};
\draw [fill=black] (4,2) circle (2.5pt);
\draw[color=black] (4,2.43) node {$a+2n$};
\draw [fill=ForestGreen] (-8,1) circle (2.5pt);
\draw[color=ForestGreen] (-8,1.43) node {$a$};
\draw [fill=ForestGreen] (-7,1) circle (2.5pt);
\draw[color=ForestGreen] (-7,1.43) node {$a+1$};
\draw [fill=ForestGreen] (-6,1) circle (2.5pt);
\draw[color=ForestGreen] (-6,1.43) node {$a+2$};
\draw[color=black] (-5,1) node {$\ldots$};
\draw [fill=ForestGreen] (-4,1) circle (2.5pt);
\draw[color=ForestGreen] (-4,1.43) node {$a+i$};
\draw[color=black] (-3,1) node {$\ldots$};
\draw [fill=ForestGreen] (-2,1) circle (2.5pt);
\draw[color=ForestGreen] (-2.1,1.43) node {$a+n-1$};
\draw [fill=red] (-1,1) circle (2.5pt);
\draw[color=red] (-1,1.43) node {$a+n$};
\draw [fill=blue] (0,1) circle (2.5pt);
\draw[color=blue] (0.1,1.43) node {$a+n+1$};
\draw[color=black] (1,1) node {$\ldots$};
\draw [fill=blue] (2,1) circle (2.5pt);
\draw[color=blue] (2,1.43) node {$a+n+i'$};
\draw [fill=black] (3,1) node {$\ldots$};
\draw [fill=black] (4,1) circle (2.5pt);
\draw[color=black] (4,1.43) node {$a+2n$};

\end{scriptsize}
\end{tikzpicture}
\caption{Case 1. $|C_1^{\alpha}| \geq 3$ whenever $k=3$ and $\alpha(a+1)=\alpha(a+n)$. Color 1 is green, color $l$ is red and color $h$ is blue. The vertices for which we do not know the color assigned to them are represented in black.}
\label{figure: lemma 5 case 1.1}
\end{center}
\end{figure}
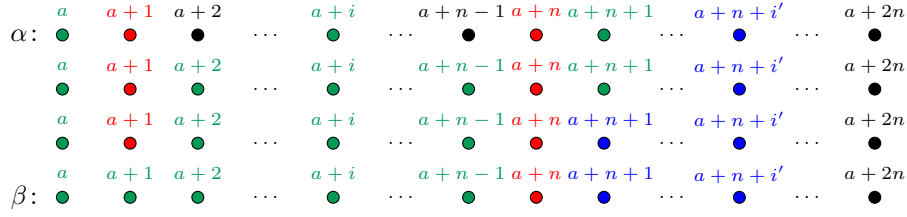

    Whenever $l=\alpha(a+1)\neq\alpha(a+n)=h$, then $\alpha(a+2n)=l$.  
    Otherwise, whenever $\alpha(a+2n)=h$ and if, for some $i\in \{a+n+2,a+n+3,\ldots,a+2n-1\}$, $\alpha(i)=h$, then $(a+n,i,a+2n,a+n)$ is a monochromatic cycle and if, for each $i\in \{a+n+2,a+n+3,\ldots,a+2n-1\}$, $\alpha(i)=l$, then $(a+n+2,a+2n-1,a+1,a+n+2)$ is a monochromatic cycle. In any case, we have a contradiction. Hence, $\alpha(a+2n)=l$. 
    The above implies that $C_h^\alpha\subseteq \{a+n,a+n+1,\ldots, a+2n-1\}$. By Lemma \ref{lemma: n volteada aciclico de n}, the subdigraph of $T$ induced by $\{a+n,a+n+1, a+n+2, \ldots, a+2n-1\}$ is a transitive tournament. So, we recolor vertex $a+n+1$ with color $h$. Next, recolor vertex $a+1$ with color $1$. The current coloring $\beta$ satisfies $d(\alpha,\beta)\leq n-|C_1^\alpha|+2=n+2-|C_1^\alpha|$ (see Figure \ref{figure: lemma 5 case 1.2}).

\begin{figure}
\begin{center}
\begin{tikzpicture}[x=1cm,y=8mm, scale=.9]
\draw[color=black] (-8.5,5) node {$\alpha\colon$};
\draw[color=black] (-8.5,2) node {$\beta\colon$};
\begin{scriptsize}
\draw [fill=ForestGreen] (-8,5) circle (2.5pt);
\draw[color=ForestGreen] (-8,5.43) node {$a$};
\draw [fill=red] (-7,5) circle (2.5pt);
\draw[color=red] (-7,5.43) node {$a+1$};
\draw [fill=black] (-6,5) circle (2.5pt);
\draw[color=black] (-6,5.43) node {$a+2$};
\draw[color=black] (-5,5) node {$\ldots$};
\draw [fill=ForestGreen] (-4,5) circle (2.5pt);
\draw[color=ForestGreen] (-4,5.43) node {$a+i$};
\draw[color=black] (-3,5) node {$\ldots$};
\draw [fill=black] (-2,5) circle (2.5pt);
\draw[color=black] (-2.1,5.43) node {$a+n-1$};
\draw [fill=blue] (-1,5) circle (2.5pt);
\draw[color=blue] (-1,5.43) node {$a+n$};
\draw [fill=ForestGreen] (0,5) circle (2.5pt);
\draw[color=ForestGreen] (0.1,5.43) node {$a+n+1$};
\draw[color=black] (1,5) node {$\ldots$};
\draw [fill=black] (2,5) circle (2.5pt);
\draw[color=black] (2,5.43) node {$a+n+i'$};
\draw [fill=black] (3,5) node  {$\ldots$};
\draw [fill=red] (4,5) circle (2.5pt);
\draw[color=red] (4,5.43) node {$a+2n$};
\draw [fill=ForestGreen] (-8,4) circle (2.5pt);
\draw[color=ForestGreen] (-8,4.43) node {$a$};
\draw [fill=red] (-7,4) circle (2.5pt);
\draw[color=red] (-7,4.43) node {$a+1$};
\draw [fill=ForestGreen] (-6,4) circle (2.5pt);
\draw[color=ForestGreen] (-6,4.43) node {$a+2$};
\draw[color=black] (-5,4) node {$\ldots$};
\draw [fill=ForestGreen] (-4,4) circle (2.5pt);
\draw[color=ForestGreen] (-4,4.43) node {$a+i$};
\draw[color=black] (-3,4) node {$\ldots$};
\draw [fill=ForestGreen] (-2,4) circle (2.5pt);
\draw[color=ForestGreen] (-2.1,4.43) node {$a+n-1$};
\draw [fill=blue] (-1,4) circle (2.5pt);
\draw[color=blue] (-1,4.43) node {$a+n$};
\draw [fill=ForestGreen] (0,4) circle (2.5pt);
\draw[color=ForestGreen] (0.1,4.43) node {$a+n+1$};
\draw[color=black] (1,4) node {$\ldots$};
\draw [fill=black] (2,4) circle (2.5pt);
\draw[color=black] (2,4.43) node {$a+n+i'$};
\draw [fill=black] (3,4) node  {$\ldots$};
\draw [fill=red] (4,4) circle (2.5pt);
\draw[color=red] (4,4.43) node {$a+2n$};
\draw [fill=ForestGreen] (-8,3) circle (2.5pt);
\draw[color=ForestGreen] (-8,3.43) node {$a$};
\draw [fill=red] (-7,3) circle (2.5pt);
\draw[color=red] (-7,3.43) node {$a+1$};
\draw [fill=ForestGreen] (-6,3) circle (2.5pt);
\draw[color=ForestGreen] (-6,3.43) node {$a+2$};
\draw[color=black] (-5,3) node {$\ldots$};
\draw [fill=ForestGreen] (-4,3) circle (2.5pt);
\draw[color=ForestGreen] (-4,3.43) node {$a+i$};
\draw[color=black] (-3,3) node {$\ldots$};
\draw [fill=ForestGreen] (-2,3) circle (2.5pt);
\draw[color=ForestGreen] (-2.1,3.43) node {$a+n-1$};
\draw [fill=blue] (-1,3) circle (2.5pt);
\draw[color=blue] (-1,3.43) node {$a+n$};
\draw [fill=blue] (0,3) circle (2.5pt);
\draw[color=blue] (0.1,3.43) node {$a+n+1$};
\draw[color=black] (1,3) node {$\ldots$};
\draw [fill=black] (2,3) circle (2.5pt);
\draw[color=black] (2,3.43) node {$a+n+i'$};
\draw [fill=black] (3,3) node {$\ldots$};
\draw [fill=red] (4,3) circle (2.5pt);
\draw[color=red] (4,3.43) node {$a+2n$};
\draw [fill=ForestGreen] (-8,2) circle (2.5pt);
\draw[color=ForestGreen] (-8,2.43) node {$a$};
\draw [fill=ForestGreen] (-7,2) circle (2.5pt);
\draw[color=ForestGreen] (-7,2.43) node {$a+1$};
\draw [fill=ForestGreen] (-6,2) circle (2.5pt);
\draw[color=ForestGreen] (-6,2.43) node {$a+2$};
\draw[color=black] (-5,2) node {$\ldots$};
\draw [fill=ForestGreen] (-4,2) circle (2.5pt);
\draw[color=ForestGreen] (-4,2.43) node {$a+i$};
\draw[color=black] (-3,2) node {$\ldots$};
\draw [fill=ForestGreen] (-2,2) circle (2.5pt);
\draw[color=ForestGreen] (-2.1,2.43) node {$a+n-1$};
\draw [fill=blue] (-1,2) circle (2.5pt);
\draw[color=blue] (-1,2.43) node {$a+n$};
\draw [fill=blue] (0,2) circle (2.5pt);
\draw[color=blue] (0.1,2.43) node {$a+n+1$};
\draw[color=black] (1,2) node {$\ldots$};
\draw [fill=black] (2,2) circle (2.5pt);
\draw[color=black] (2,2.43) node {$a+n+i'$};
\draw [fill=black] (3,2) node {$\ldots$};
\draw [fill=red] (4,2) circle (2.5pt);
\draw[color=red] (4,2.43) node {$a+2n$};
\end{scriptsize}
\end{tikzpicture}
\caption{Case 1. $|C_1^{\alpha}| \geq 3$ whenever $k=3$ and $\alpha(a+1)\neq \alpha(a+n)$. Color 1 is green, color $l$ is red and color $h$ is blue. The vertices for which we do not know the color assigned to them are represented in black.}
\label{figure: lemma 5 case 1.2}
\end{center}
\end{figure}
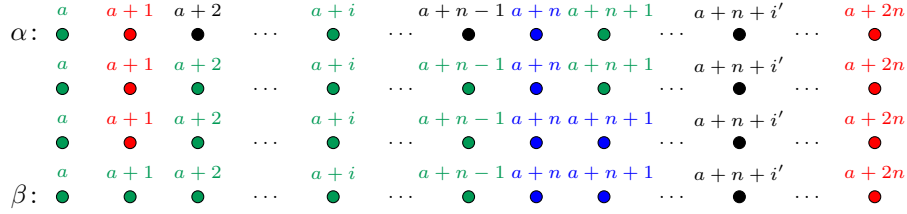

    Any coloring $\beta$ of Case 1 is at distance at most $n+2-|C_1^\alpha|$.

\begin{case}
    $|C_1^\alpha|\in\{0,1\}$.

\end{case}

    If $|C_1^\alpha|=1$, let $a$ be the only vertex in $C_1^\alpha$. Otherwise, let $a$ be any vertex in $T$. 
    Recolor one by one all vertices in $\{a,a+1,a+2, \ldots, a+n-1\}\setminus C_1^\alpha$ with color $1$. Thus, we obtain a coloring $\beta$ with the color class$C_1^\beta=\{a,a+1,a+2,\ldots,a+n-1\}$ in $n$ steps, and $d(\alpha,\beta) \leq n-|C_1^\alpha|$. 
       
\begin{case}
    $|C_1^\alpha|=2$. 
\end{case}

    Whenever $C_1^\alpha\subseteq \{a,a+1,a+2, \ldots, a+n-1\}$ for some $a\in \mathbb{Z}_{2n+1}$. 
    Then we recolor the vertices in $\{a,a+1,a+2, \ldots, a+n-1\}\setminus C_1^\alpha$ with color $1$. In this way, we obtain a coloring $\beta$ with the color class $C_1^\beta=\{a,a+1,a+2,\ldots,a+n-1\}$ in $n-2=n-|C_1^\alpha|$ steps, and  $d(\alpha,\beta)\leq n-|C_1^\alpha|$.
    Whenever $C_1^\alpha= \{a,a+n\}$ for some $a\in \mathbb{Z}_{2n+1}$, we have that $(a+n,a)\in A(T)$.
%
    Since $n\geq 4$, we may recolor vertex $a+n+2$  with color 1. The new coloring $\alpha'$ is acyclic as the subdigraph of $T$ induced by $\{a,a+n, a+n+2, a+2n-1\}$ is a transitive tournament. Moreover, $d(\alpha',\alpha)=1$, and its color class $C_1^{\alpha'}$ satisfies $|C_1^{\alpha'}|=3$, so we can proceed as in Case 1 to obtain a coloring $\beta$ such that $d(\alpha',\beta) \leq n+2-|C_1^{\alpha'}|=n+2-3=n+1-2=n+1-|C_1^\alpha|$ and, for some $a\in \mathbb{Z}_{2n+1}$,  $C^\beta_1=\{a,a+1,\ldots, a+n-1\}$. So, $d(\alpha, \beta)\leq n+1-|C_1^\alpha|+1=n+2-|C_1^\alpha|$. 
\end{proof}

Let $n$ be an integer, $n\geq 3$, and let $\mathscr{C}$ be the set of colorings  $\alpha\colon \mathbb{Z}_{2n+1}\to \{1,2,3\}$ of $\vec{C}_{2n+1}\langle n \rangle$ such that the color classes of $\alpha$ are $\{a\}$, $\{a+1, a+2, \ldots, a+n\}$ and $\{a+n+1,a+n+2,\ldots,a+2n\}$ for some $a\in \mathbb{Z}_{2n+1}$.

\begin{lemma}
    Let $n$ be an integer with $n\geq 3$. If $\alpha$ and $\beta$ are two 3-colorings in $\mathscr{C}$, then $\alpha$ and $\beta$ are at distance at most $3n-1$ in $\mathcal{D}_{3}(\vec{C}_{2n+1}\langle n \rangle)$. Moreover, if the singular color classes of $\alpha$ and $\beta$ have different colors, then $\alpha$ and $\beta$ are at distance at most $2n+1$.
    \label{lemma: grafica de recoloracion conexa para k=3 y n volteada 1}
   
\end{lemma}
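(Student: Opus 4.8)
The plan is to exploit the rigidity of the colorings in $\mathscr{C}$: each such coloring is determined by two data, the position $a\in\mathbb{Z}_{2n+1}$ of its singular class and the bijection assigning the three colors to the three ``slots'' --- the singleton $\{a\}$, the block $\{a+1,\dots,a+n\}$, and the block $\{a+n+1,\dots,a+2n\}$. Since $x\mapsto x+1$ is an automorphism of $\vec{C}_{2n+1}\langle n\rangle$, I would first normalize by placing the singular class of $\alpha$ at $0$. The only structural facts needed are those of Lemma~\ref{lemma: n volteada aciclico de n}: an acyclic set has at most $n$ vertices, and the acyclic sets of size exactly $n$ are the listed ``interval'' and ``gap'' sets. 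In particular each block of a coloring in $\mathscr{C}$ is a \emph{maximal} acyclic set, so it cannot be enlarged, and the singleton is frozen between its two flanking blocks, since recoloring it with an adjacent block's color would create $n+1$ consecutive vertices, which is not acyclic.

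The engine of the proof is a single recoloring scheme: transform $\alpha$ into $\beta$ by recoloring each vertex on which they differ \emph{exactly once}, straight to its $\beta$-color, in a carefully ordered sweep. The intended order grows each target block of $\beta$ from its leading end while simultaneously vacating the trailing end of the block currently using that color, alternating between the two block-boundaries so that no color class ever becomes an $(n+1)$-interval or a directed triangle; the class playing the role of $\beta$'s singular color serves as a mobile seed, and $\beta$'s singleton is recolored at the single moment when both flanking vertices already carry their $\beta$-colors. When the singular classes of $\alpha$ and $\beta$ have \emph{different} colors, the color that is singular in $\beta$ occupies a full block in $\alpha$, which supplies exactly the slack this sweep needs; as there are at most $2n+1$ vertices to recolor, each once, this gives $d(\alpha,\beta)\le 2n+1$ and proves the ``moreover'' clause. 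When $\alpha$ and $\beta$ instead share their slot-assignment, the same sweep specializes to a rotation of the whole pattern by some $t\le n$ positions, achievable in $2t+1\le 2n+1$ recolorings.

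It remains to treat colorings whose singular classes have the \emph{same} color $p$: only two slot-assignments have singleton color $p$, so either $\alpha$ and $\beta$ have identical slot-assignments (already handled above within $2n+1\le 3n-1$ steps) or they differ by swapping the two block-colors. In the swap case I would first spend $n-1$ recolorings sliding $p$ forward over the block $\{a+1,\dots,a+n\}$: recoloring $a+1,\dots,a+n-1$ to $p$ one at a time keeps the class $p$ an interval of length at most $n$ and yields a coloring $\alpha'\in\mathscr{C}$ whose singleton is $\{a+n\}$ with a block-color, hence now distinct from $\beta$'s singular color. Crucially $\alpha'$ and $\beta$ still agree on the vertex $a$, so at most $2n$ vertices differ and the sweep of the previous paragraph gives $d(\alpha',\beta)\le 2n$. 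Combining, $d(\alpha,\beta)\le (n-1)+2n = 3n-1$, as claimed.

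The main obstacle is the second paragraph: proving that the one-recoloring-per-vertex sweep can always be ordered so that all three color classes stay acyclic at every intermediate step. This is delicate precisely because the two blocks tile the complement of the singleton and are each maximal acyclic (Lemmas~\ref{lemma: acyclic triangle in circulant tournament with (n+1,0)} and~\ref{lemma: n volteada aciclico de n}), leaving essentially no room: advancing a block past an endpoint immediately produces $n+1$ consecutive vertices, and a careless recoloring can create a directed triangle, so the order in which the two boundaries are advanced and vacated must be controlled tightly. I expect the cleanest route is an explicit interleaved ordering maintained under the invariant ``every color class is contained in an arc of length at most $n$'', verified step by step, together with a short check that $\beta$'s frozen singleton is released exactly once; the distance estimates of Lemma~\ref{lemma: extend a color class} can be reused here to keep the bookkeeping honest.
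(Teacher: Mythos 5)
Your high-level reduction is reasonable and partly parallel to the paper's: split on whether the singular colors of $\alpha$ and $\beta$ coincide, and in the block-swap case first slide the singleton across a block in $n-1$ steps (the paper's Case 1 opens with the same slide) before finishing in the different-colors situation. The problem is that everything then rests on the ``engine'' of your second paragraph --- the claim that whenever the singular colors differ (and likewise in the rotation case) the recolorings can be ordered so that each differing vertex is recolored \emph{exactly once}, straight to its $\beta$-color, with every intermediate coloring acyclic --- and this claim is never proved. You flag it yourself as ``the main obstacle'' and offer only the hope of an invariant checked ``step by step.'' That is not bookkeeping: constructing and verifying such recoloring orders is the entire content of the lemma, and it is precisely what the paper's proof does via its explicit four-case analysis (split by $|C_1^\beta|$ and $\beta(0)$). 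Note also that the paper does \emph{not} achieve one recoloring per vertex in general --- several of its sequences recolor $n-1$ vertices twice, which is exactly why its same-color bound is $3n-1$ --- so the feasibility of your stronger one-pass scheme cannot be taken from the paper either.

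The gap is genuine, not cosmetic. Take $n=3$ and try to sweep from $\alpha$ with classes $C_1=\{0\}$, $C_2=\{1,2,3\}$, $C_3=\{4,5,6\}$ to $\beta$ with classes $C_1=\{4,5,6\}$, $C_2=\{0\}$, $C_3=\{1,2,3\}$ (singular colors differ, so your engine must apply). Every first move except recoloring a vertex of $\{4,5,6\}$ to color $1$ creates a color class of size four; after the legal start ``recolor $4$ to $1$,'' the \emph{unique} legal continuation is ``recolor $2$ to $3$,'' and from the resulting coloring $C_1=\{0,4\}$, $C_2=\{1,3\}$, $C_3=\{2,5,6\}$ no remaining vertex can be recolored to its target: $\{0,1,3\}$, $\{0,4,5\}$, $\{0,4,6\}$ are directed triangles and the other two moves give classes of size four --- a dead end. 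A valid one-per-vertex order does exist for this pair (recolor $5,6,2,3,0,4,1$ in that order), but it must thread through the constraints in a way no greedy or locally-checked procedure finds, and nothing in your proposal shows such an order exists for every pair and every $n$. Two further slips: your assertion that $\alpha'$ and $\beta$ ``still agree on the vertex $a$'' in the swap case holds only when $\beta$'s singleton also sits at $a$; when it sits at $b\neq a$ an agreement vertex still exists, but it is elsewhere and requires a case check over $b$ to keep the count at $2n$. And your proposed invariant, ``every color class is contained in an arc of at most $n$ consecutive vertices,'' is strictly stronger than acyclicity (the gap-type acyclic sets of Lemma~\ref{lemma: n volteada aciclico de n} violate it), so even the invariant-based plan would need a separate proof that it can always be maintained --- which, again, is the hard part.
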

\begin{proof}
    Let $\alpha$ and $\beta$ be two colorings in $\mathscr{C}$. Suppose w.l.o.g. that the color classes of $\alpha$ are $C_1^\alpha=\{0\}$, $C_2^\alpha=\{1,2,\ldots, n\}$ and $C_3^\alpha=\{n+1,n+2,\ldots, 2n\}$ and the color classes of $\beta$ are $C_h^\beta=\{a\}$, $C_i^\beta=\{a+1,a+2,\ldots, a+n\}$ and $C_j^\beta=\{a+n+1,a+n+2,\ldots, a+2n\}$, where $\{h,i,j\}=\{1,2,3\}$. 
    
\begin{case}
    $|C_1^\beta|=1$ and $\beta(0)=1$.
\end{case}   
    The above implies that $C_1^\beta=\{0\}$.
    If $i=2$, then $j=3$, and thus $\alpha$ and $\beta$ are the same coloring. So, we assume that $i=3$. In this way, the color classes of $\beta$ are $C_1^\beta=\{0\}$, $C_3^\beta=\{1,2,\ldots, n\}$ and $C_2^\beta=\{n+1,n+2,\ldots, 2n\}$.
    Start with coloring $\alpha$. Recolor, one by one, the vertices in $\{1,2,\ldots, n-1\}$ with color 1. The current coloring is acyclic by Lemma \ref{lemma: n volteada aciclico de n}. Then  recolor, one by one, the  vertices in $\{n+1,n+2,\ldots, 2n-1\}$ with color 2. After, recolor vertex $n$ with color 3. Next, recolor vertex $2n$ with color 2. Finally, recolor, one by one, all vertices in $\{1,2,\ldots, n-1\}$ with color 3. 
    The current coloring  $\beta$ satisfies $d(\alpha,\beta)\leq n-1+n-1+1+1+n-1 = 3n-1$.

\begin{case}
$|C_1^\beta|=1$ and $\beta(0)\neq 1$.
    
\end{case}

    The above implies that $C_1^\beta=\{x\}$ for some $x\in\{1,2,\ldots,2n\}$. By symmetry, we may assume that $x\in\{1,2,\ldots,n\}$. 

\begin{subcase}
    $x=n$.
\end{subcase}

    Start with coloring $\alpha$. If $\beta(1) = 2 = \alpha(n)$, then recolor vertex $n$ with color 1 and, after, recolor vertex $0$ with color 2. The current coloring $\beta$ satisfies $d(\alpha,\beta) = 2$.
   If $\beta(1) = 3\neq 2 = \alpha(n)$, then recolor all vertices in $\{n,n+1,\ldots, 2n-1\}\setminus\{n+1\}$ with color 1, this takes $n-1$ vertex recolorings. After, recolor vertex $n-1$ with color 3 and, then, recolor vertex $2n$ with color 2. Next, recolor all vertices in $\{0,1,\ldots,n-2\}\setminus \{1\}$ with color 3, this takes $n-2$ steps. Now, recolor vertex $n+1$ with color 2 and, then, recolor vertex 1 with color 3. Finally, recolor all vertices in $\{n+2,n+3, \ldots, 2n-1\}$ with color 2, this takes $n-2$ vertex recolorings. 
    The current coloring $\beta$ satisfies $d(\alpha,\beta)\leq n-1 + 2 + n-2 + 2 + n-2 = 3n-1$.
    
\begin{subcase}
    $1\leq x \leq n-1$.
\end{subcase}     

    Start with coloring $\alpha$. If $\beta(1) = 3\neq 2 = \alpha(x)$, then recolor all vertices in $\{1,2,\ldots, x\}$ with color 1, this takes $x$ vertex recolorings. After, recolor all vertices in $\{x+1,x+2, \ldots, x+n\}\setminus C_2^\alpha$ with color 2, this takes $n-(n-x)=x$ steps. Finally, recolor all vertices in $\{x+n+1,x+n+2, \ldots, x+2n\}\setminus C_3^\alpha$ with color 3, this also takes $x$ vertex recolorings. 
    The current coloring $\beta$ satisfies $d(\alpha,\beta)\leq 3x\leq 3(n-1)=3n-3$.
    If $\beta(1) = 2 = \alpha(x)$, then recolor all vertices in $\{x+n+1,x+n+2,\ldots, 2n\}$ with color 1, this takes $n-x$ vertex recolorings. After, recolor all vertices in $\{x+1, \ldots, n\}$ with color 3, this takes $n-x$ steps. Then, recolor all vertices in $\{x+n+2,x+n+3, \ldots, 2n, 0\}$ with color 2, this also takes $n-x$ vertex recolorings. Then recolor vertex $x$ with color 1 and, finally, recolor vertex $x+n+1$ with color 2. 
    The current coloring $\beta$ satisfies $d(\alpha,\beta)\leq 3(n-x)+2\leq 3(n-1)+2=3n-1$.

 \begin{case}
     $|C_1^\beta|=n$ and $\beta(0)=1$.
 \end{case}
    Let $x=\min\{b\in \{1,2,\ldots,n\}\colon \beta(b)\neq 1\}$. Then $C_2^\beta \cup C_3^\beta =\{x, x+1, \ldots, x+n\}$ and $C_1^\beta=\{x+n+1,x+n+2,\ldots, x+2n\}$.
    Start with coloring $\alpha$. Let $\alpha'$ be the coloring with color classes $C_1^{\alpha'}=\{x+n+1, x+n+2, \ldots, x+2n\}$, $C_2^{\alpha'}=\{x, \ldots, n\}$ and $C_3^{\alpha'}=\{n+1,\ldots, x+n\}$ obtained from $\alpha$ by recoloring all vertices in $\{x+n+1,x+n+2,\ldots, x+2n\}\setminus\{0\}$ with color 1, then $d(\alpha,\alpha')=n-1$.

\begin{subcase}
     $\beta(x)=2$.
\end{subcase}
    Here $\alpha'(x)=\alpha(x)=2=\beta(x)$. Now, we transform $\alpha'$ into $\beta$.
    If $C_2^\beta=\{x\}$, then recolor all vertices in $\{x+1, x+2,\ldots, x+2n\}\setminus C_3^\beta$ with color 3. 
    This process takes at most $n-1$ vertex recolorings, and the current coloring is $\beta$. Hence, $d(\alpha, \beta) \leq n-1+n-1=2n-2$.
    If $C_2^\beta=\{x,x+1,\ldots, x+n-1\}$, then recolor all vertices in $\{n+1, \ldots, x+n-1\}$ with color 2. This process takes at most $n-1$ vertex recolorings, and the current coloring is $\beta$. Hence, $d(\alpha, \beta) \leq n-1+n-1=2n-2$.

\begin{subcase}
    $\beta(x)=3$.
\end{subcase}
    Here $\alpha'(x)=\alpha(x)=2\neq 3=\beta(x)$. Now, we transform $\alpha'$ into $\beta$.
    If $C_3^\beta=\{x\}$, then recolor each vertex in $\{n+1, \ldots, x+n-1\}$ with color 2, this process takes at most $n-1$ steps. After, recolor vertex $x$ with color 3. And, finally, recolor vertex $x+n$ with color 2. 
    The current coloring is $\beta$ and satisfies $d(\alpha,\beta)\leq n-1+n-1+1+1=2n$.
    If $C_3^\beta=\{x,x+1,\ldots, x+n-1\}$, then recolor all vertices in $\{x+1, \ldots, x+n\}\setminus C_3^{\alpha'}$ with color 3, this process takes at most $n-1$ steps. After, recolor vertex $x+n$ with color 2. And, finally, recolor vertex $x$ with color 3. The current coloring is $\beta$ and satisfies $d(\alpha,\beta)\leq n-1+n-1+1+1 = 2n$.
    
\begin{case}
$|C_1^\beta|=n$ and $\beta(0)\neq 1$.    
\end{case} 
    The above implies that $C_1^\beta=\{x+1, x+2, \ldots, x+n\}$ for some $x\in\{0,1,\ldots, n\}$.

\begin{subcase}
    $C_2^\beta=\{x\}$ or $C_3^\beta=\{x+n+1\}$.
\end{subcase}
    Whenever $C_2^\beta=\{x\}$, we have that $C_3^\beta=\{x+n+1,x+n+2,\ldots, x+2n\}$.  
    Start with coloring $\alpha$. If $x=0$, then recolor vertex $n$ with color 1. After, recolor vertex 0 with color 2. Next, recolor all vertices in $\{1,2,\ldots,n-1\}$ with color 1. The current coloring is $\beta$ and satisfies $d(\alpha,\beta)\leq 1+1+n-1=n+1$.
    If $x\neq 0$, we recolor vertex $n+1$ with color 1. After, we recolor vertex 0 with color 3. Next, we recolor all vertices in $\{x+1, x+2, \ldots, x+n\}\setminus \{n+1\}$ with color 1. And, finally, we recolor all vertices in $\{1,\ldots, x-1\}$ with color 3. 
    The current coloring is $\beta$ and satisfies $d(\alpha,\beta)\leq 1+1+n-1+x-1\leq 2n$.
   
Analogously, $d(\alpha, \beta) \leq 2n$ whenever $C_3^\beta=\{x+n+1\}$.

\begin{subcase}
    $C_2^\beta=\{x+n+1\}$ or $C_3^\beta=\{x\}$.
\end{subcase}
    Whenever $C_3^\beta=\{x\}$, we have that $C_2^\beta=\{x+n+1,x+n+2,\ldots, x+2n\}$. 
    Start with coloring $\alpha$. If $x=0$, then recolor all vertices in $\{1,2,\ldots, n-1\}$ with color 1. Then, recolor all vertices in $\{n+1, n+2, \ldots, 2n-1\}$ with color 2. Next, recolor vertex $0$ with color 3. After, recolor vertex $n$ with color 1. Finally, recolor vertex $2n$ with color 2. 
    The current coloring is $\beta$ and satisfies $d(\alpha,\beta)\leq n-1+n-1+1+1+1=2n+1$.
    If $x\neq 0$, then recolor vertex $n$ with color 1 and, after, recolor vertex $0$ with color 2. Then, recolor all vertices in $\{x+1, x+2, \ldots, x+n\}\setminus\{n\}$ with color 1. Next, recolor all vertices in $\{x+n+2,x+n+3,\ldots,2n\}$ with color 2. After, recolor vertex $x$ with color 3. Finally, recolor vertex $x+n+1$ with color 2. 
    The current coloring is $\beta$ and satisfies $d(\alpha,\beta)\leq 1+1+n-1+n-x-1+1+1\leq 2n+1$.

    Analogously, $d(\alpha, \beta) \leq 2n+1$ whenever $C_2^\beta=\{x+n+1\}$.
\end{proof}

\begin{theorem}
    Let $n$ be an integer, $n\geq 5$. Then $\mathcal{D}_{3}(\vec{C}_{2n+1}\langle n \rangle)$ is connected and has diameter at most $3n+4+\lfloor \frac{2n+1}{3}\rfloor$.
    \label{teo: grafica de recoloracion conexa para k=3 y n volteada}
\end{theorem}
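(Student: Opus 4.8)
The plan is to reduce an arbitrary $3$-coloring to the distinguished family $\mathscr{C}$ and then invoke Lemma~\ref{lemma: grafica de recoloracion conexa para k=3 y n volteada 1}, which already controls distances inside $\mathscr{C}$. Concretely, given two $3$-colorings $\alpha,\beta$ of $T=\vec{C}_{2n+1}\langle n\rangle$, I would produce colorings $\alpha^\ast,\beta^\ast\in\mathscr{C}$ with $d(\alpha,\alpha^\ast)$ and $d(\beta,\beta^\ast)$ small, and then bound $d(\alpha,\beta)\le d(\alpha,\alpha^\ast)+d(\alpha^\ast,\beta^\ast)+d(\beta^\ast,\beta)$. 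Since $\vec{\chi}(T)=3$, every $3$-coloring uses all three colors, so the three classes are nonempty, have size at most $n$, and sum to $2n+1$; in particular the largest class has at least $\lceil(2n+1)/3\rceil\ge 4$ vertices when $n\ge5$, hence is not a forbidden triangle.

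First I would apply Lemma~\ref{lemma: extend a color class} to the largest class of $\alpha$, turning it into an interval $C=\{a,a+1,\dots,a+n-1\}$ of a single color $p$; by that lemma this costs at most $n+2-\lceil(2n+1)/3\rceil$ recolorings. The remaining $n+1$ vertices $R=\{a+n,a+n+1,\dots,a+2n\}$ then carry only the other two colors $q,r$, since color $p$ is saturated ($C$ is a maximal acyclic set, so no vertex of $R$ can be added to it). The key structural observation is that the subtournament induced by $R$ is transitive along $r_0=a+n,\ r_1=a+n+1,\dots,r_n=a+2n$ except for the single reversed arc $r_n\to r_0$; consequently every directed cycle inside $R$ uses that arc, so a two-colored $R$ is acyclic as soon as $r_0$ and $r_n$ receive different colors. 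I would then ``canonicalize'' $R$ into the shape required by $\mathscr{C}$: recolor the minority of one of the two length-$n$ runs $\{r_1,\dots,r_n\}$ or $\{r_0,\dots,r_{n-1}\}$ so that it becomes a monochromatic interval while the remaining end vertex stays a singleton of the third color. Choosing the cheaper of the two runs keeps this at most $\lfloor n/2\rfloor$ recolorings, and keeping $r_0,r_n$ in distinct colors throughout guarantees acyclicity at every intermediate step. The resulting coloring has classes $C$, one length-$n$ interval inside $R$, and one singleton, which is exactly a member of $\mathscr{C}$ (one checks directly that the singleton sits between the two $n$-intervals in the cyclic order).

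Performing the same reduction on $\beta$ gives $\beta^\ast\in\mathscr{C}$. Because I am free to choose which run of $R$ to monochromatize (and, if needed, which class of the original coloring to extend), I can arrange that the singleton of $\alpha^\ast$ and the singleton of $\beta^\ast$ receive \emph{different} colors; then Lemma~\ref{lemma: grafica de recoloracion conexa para k=3 y n volteada 1} bounds $d(\alpha^\ast,\beta^\ast)\le 2n+1$. Adding the two reduction costs (each at most $(n+2-\lceil(2n+1)/3\rceil)+\lfloor n/2\rfloor$) to $2n+1$ yields a total that I expect to fall within $3n+4+\lfloor(2n+1)/3\rfloor$; a quick check for small $n$ confirms the sum stays under the claimed bound, giving both connectedness and the diameter estimate.

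The main obstacle is the canonicalization of $R$: I must verify that the $(n+1)$-vertex region can always be reshaped into (singleton)$+$(length-$n$ interval) using only the two available colors, that every intermediate coloring stays acyclic (which is where the ``$r_0,r_n$ separated'' reformulation does the work), and --- most delicately --- that the color of the final singleton can be chosen so that the singletons of $\alpha^\ast$ and $\beta^\ast$ differ, since only the sharper $2n+1$ bound of Lemma~\ref{lemma: grafica de recoloracion conexa para k=3 y n volteada 1}, and not its generic $3n-1$ bound, is tight enough to match the target diameter. Getting the bookkeeping of recolorings to land exactly under $3n+4+\lfloor(2n+1)/3\rfloor$, rather than merely $O(n)$, is where the careful constant-chasing lives.
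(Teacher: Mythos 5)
Your proposal follows the same architecture as the paper's proof: extend a largest color class (of size at least $\lceil\frac{2n+1}{3}\rceil\geq 4$, hence not a forbidden triangle) into a length-$n$ interval via Lemma \ref{lemma: extend a color class}, reshape the remaining $n+1$ vertices $R=\{r_0,r_1,\dots,r_n\}$ into a singleton plus a length-$n$ interval so as to land in $\mathscr{C}$, and conclude with Lemma \ref{lemma: grafica de recoloracion conexa para k=3 y n volteada 1}. Your structural claims are correct: $R$ induces a transitive tournament except for the single back-arc $(r_n,r_0)$, so any two-coloring of $R$ separating $r_0$ from $r_n$ is acyclic, and both canonical reshapings of $R$ lie in $\mathscr{C}$. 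The gap is precisely the step you flag as delicate, and it does not resolve the way you assert: you cannot always arrange the singletons of $\alpha^\ast$ and $\beta^\ast$ to have different colors within your stated budget. If $r_0$ and $r_n$ carry the \emph{same} color, then the two classes inside $R$ are forced to be $\{r_0,r_n\}$ and $\{r_1,\dots,r_{n-1}\}$ (any third vertex in the class of $r_0,r_n$ closes a triangle through the back-arc), so both cheaply reachable canonical forms have their singleton in the color of $\{r_0,r_n\}$; flipping that color costs $n$ recolorings, not $\lfloor n/2\rfloor$. Concretely, take $\rho$ with classes $\{0,\dots,n-1\}$, $\{n,2n\}$, $\{n+1,\dots,2n-1\}$ and $\sigma$ with classes $\{1,\dots,n\}$, $\{n+1,0\}$, $\{n+2,\dots,2n\}$, colored $1,2,3$ in this order: extending the largest class of either coloring leaves both endpoint pairs monochromatic of color $2$, so every reshaping within your budget produces two singletons of color $2$. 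Your freedom in choosing which run of $R$ to monochromatize changes the \emph{position} of the singleton, never its color, in this situation; and your fallback of extending a different class of the original coloring abandons the per-coloring budget $(n+2-\lceil\frac{2n+1}{3}\rceil)+\lfloor n/2\rfloor$ on which your final arithmetic rests, with no replacement accounting.

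The correct completion is a trade-off, not a uniform strategy, and it is exactly what the paper's case analysis implements. When the singleton colors are forced to coincide (both endpoint pairs monochromatic with the same color), the reshaping costs only one recoloring per coloring, so the weaker $3n-1$ bound of Lemma \ref{lemma: grafica de recoloracion conexa para k=3 y n volteada 1} gives $2\left(n+3-\lceil\tfrac{2n+1}{3}\rceil\right)+3n-1=5n+5-2\lceil\tfrac{2n+1}{3}\rceil\leq 3n+4+\lfloor\tfrac{2n+1}{3}\rfloor$, which fits. When at least one coloring has $r_0,r_n$ in different classes, its singleton color can be steered to disagree with the other's, but possibly at the cost of the expensive run (up to $n-1$ recolorings); this is affordable only because the accounting is done \emph{jointly} over the pair (the two reshaping costs then total roughly $n$, rather than being bounded by $\lfloor n/2\rfloor$ each). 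In particular, your premise that ``only the sharper $2n+1$ bound is tight enough to match the target diameter'' is wrong: the generic $3n-1$ bound is exactly what rescues the forced-equal case, because there the reshaping is nearly free. Without this case-dependent, joint bookkeeping — which is the actual substance of the paper's proof — your proposal does not close.
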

\begin{proof}     
   Let $T=\vec{C}_{2n+1}\langle n \rangle$.
   Let $\rho, \sigma \colon \mathbb{Z}_{2n+1}\to \{1,2,3\}$ be two 3-colorings of $T$.
Let $C_1^\rho$, $C_2^\rho$, $C_3^\rho$ and $C_1^\sigma$, $C_2^\sigma$, $C_3^\sigma$  be the color classes of $\rho$ and $\sigma$, respectively. Since $2n+1\geq 11$, it follows that $|C_i^\rho|\geq \lceil \frac{2n+1}{3}\rceil\geq 4$ and $|C_j^\sigma|\geq \lceil \frac{2n+1}{3}\rceil\geq 4$, for some $i\in\{1,2,3\}$ and $j\in\{1,2,3\}$.

By Lemma \ref{lemma: extend a color class}, there are two 3-colorings $\rho'$ and $\sigma'$ such that its $i$th and $j$th color classes are $C_i^{\rho'}=\{a+1,a+2,\ldots,a+n\}$ and $C_j^{\sigma'}=\{b+1,b+2,\ldots,b+n\}$, respectively, for some $a\in \mathbb{Z}_{2n+1}$ and $b\in \mathbb{Z}_{2n+1}$. Moreover, $d(\rho,\rho')\leq n+2-|C_i^\rho|\leq n+2-\lceil \frac{2n+1}{3}\rceil$ and $d(\sigma',\sigma)\leq n+2-|C_j^\sigma|\leq n+2-\lceil \frac{2n+1}{3}\rceil$. 

\begin{case} $i=j$.\end{case}

Let $g$ and $h$ be the two colors in $\{1,2,3\}\setminus \{i\}$.

\begin{subcase} $\{a+n+1, a+2n+1\}\subseteq C_g^{\rho'}$.\end{subcase}

Hence, $\{a+n+2,a+n+3,\ldots,a+2n\}=C_h^{\rho'}$ and $\{a+n+1, a+2n+1\}= C_g^{\rho'}$, otherwise $T$ contains a monochromatic directed triangle. Then, from $\rho'$, we obtain a coloring $\alpha\in \mathscr{C}$ by recoloring vertex $a+n+1$ with color $h$. Where $\alpha$ is such that $d(\rho,\alpha)\leq n+2-\lceil \frac{2n+1}{3}\rceil+1$ and whose singleton color class is colored with color $g$.

Now, let us consider coloring $\sigma'$, we have that the other color classes of $\sigma'$, $C_g^{\sigma'}$ and $C_h^{\sigma'}$, satisfy that $C_g^{\sigma'} \cup C_h^{\sigma'} =\{b+n+1, b+n+2, \ldots, b+2n+1\}$ and both are nonempty.

Whenever $\{b+n+1, b+2n+1\}\subseteq C_h^{\sigma'}$, it follows that $C_g^{\sigma'} =\{b+n+2, b+n+3, \ldots, b+2n\}$. So, we recolor vertex $b+2n+1$ with color $g$, and thus the current coloring $\beta$ is in $\mathscr{C}$ and $d(\beta,\sigma)\leq n+2-\lceil \frac{2n+1}{3}\rceil+1$. Moreover, by Lemma \ref{lemma: grafica de recoloracion conexa para k=3 y n volteada 1}, $d(\alpha,\beta)\leq 2n+1$, and thus $d(\rho, \sigma)\leq d(\rho,\alpha)+d(\alpha,\beta)+d(\beta,\sigma)\leq n+3-\lceil \frac{2n+1}{3}\rceil + 2n+1 + n+3-\lceil \frac{2n+1}{3}\rceil = 4n+7-2\lceil \frac{2n+1}{3}\rceil$.

Whenever $\{b+n+1, b+2n+1\}\subseteq C_g^{\sigma'}$, it follows that $C_h^{\sigma'} =\{b+n+2, b+n+3, \ldots, b+2n\}$. So, we recolor vertex $b+2n+1$ with color $h$, and thus the current coloring $\beta$ is in $\mathscr{C}$ and $d(\beta,\sigma)\leq n+2-\lceil \frac{2n+1}{3}\rceil+1$. Moreover, by Lemma \ref{lemma: grafica de recoloracion conexa para k=3 y n volteada 1}, $d(\alpha,\beta)\leq 3n-1$, and thus $d(\rho, \sigma)\leq d(\rho,\alpha)+d(\alpha,\beta)+d(\beta,\sigma)\leq n+3-\lceil \frac{2n+1}{3}\rceil + 3n-1 + n+3-\lceil \frac{2n+1}{3}\rceil = 5n+5-2\lceil \frac{2n+1}{3}\rceil$.

Whenever $\sigma'(b+n+1)\neq\sigma'(b+2n+1)$, we may suppose w.l.o.g. that $\sigma'(b+n+1)=g$ and $\sigma'(b+2n+1)=h$. So, we recolor all vertices in $\{b+n+2, b+n+3, \ldots, b+2n\}\setminus C_g^{\sigma'}$ with color $g$, this takes at most $n-1$ vertex recolorings and the current coloring $\beta$ is in $\mathscr{C}$ and $d(\beta,\sigma)\leq n+2-\lceil \frac{2n+1}{3}\rceil + n-1$. Moreover, by Lemma \ref{lemma: grafica de recoloracion conexa para k=3 y n volteada 1}, $d(\alpha,\beta)\leq 2n+1$, and thus $d(\rho, \sigma)\leq d(\rho,\alpha)+d(\alpha,\beta)+d(\beta,\sigma)\leq n+3-\lceil \frac{2n+1}{3}\rceil + 2n+1 + 2n+1-\lceil \frac{2n+1}{3}\rceil = 5n+5-2\lceil \frac{2n+1}{3}\rceil$.

In a similar way it can be proved that if $\{a+n+1, a+2n+1\}\subseteq C_h^{\rho'}$, $\{b+n+1, b+2n+1\}\subseteq C_g^{\sigma'}$, or $\{b+n+1, b+2n+1\}\subseteq C_h^{\sigma'}$, then $d(\rho,\sigma)\leq 5n+5-2\lceil\frac{2n+1}{3}\rceil$. 

\begin{subcase} $\rho'(a+n+1)\neq\rho'(a+2n+1)$ and $\sigma'(b+n+1)\neq\sigma'(b+2n+1)$.\end{subcase}

We assume w.l.o.g. that $\rho'(a+n+1)=\sigma'(b+n+1)=g$ and $\rho'(a+2n+1)=\sigma'(b+2n+1)=h$.

Since there are other $n-1$ vertices, namely $a+n+2,a+n+3,\ldots,a+2n$, colored with colors $g$ and $h$ in $\rho'$, one of the two color classes has at least $\lceil \frac{n-1}{2}\rceil+1$ vertices. The same can be said about the color classes of colors $g$ and $h$ in $\sigma'$.
Let $M=\max \{|C_{g}^{\rho'}|,|C_{h}^{\rho'}|,|C_{g}^{\sigma'}|,|C_{h}^{\sigma'}|\}$. 
Suppose w.l.o.g. that $M=|C_{g}^{\rho'}|$. Then $M=|C_{g}^{\rho'}| \geq \lceil \frac{n-1}{2}\rceil+1$, and $|C_{h}^{\rho'}|\leq n+1-M$. 
Start with $\rho'$ and recolor all vertices in $\{a+n+2,a+n+3,\ldots,a+2n\}\setminus C_g^{\rho'}$ with color $g$, this takes at most $n-M$ steps, and the current coloring $\alpha$ is in $\mathscr{C}$ and $d(\rho,\alpha)\leq n+2-\lceil\frac{2n+1}{3}\rceil + n-M$.

If $|C_{h}^{\sigma'}|\geq \lceil \frac{n-1}{2}\rceil+1$, then $M\geq |C_{h}^{\sigma'}|$ and $|C_{g}^{\sigma'}|\leq n+1-M$.
Start with coloring $\sigma'$ and recolor all vertices in $\{b+n+2,b+n+3,\ldots,b+2n\}\setminus C_h^{\sigma'}$ with color $h$, this takes at most $n-M$ steps, and the current coloring $\beta$ is in $\mathscr{C}$ and $d(\beta,\sigma)\leq n+2-\lceil\frac{2n+1}{3}\rceil + n-M$. 
Moreover, by Lemma \ref{lemma: grafica de recoloracion conexa para k=3 y n volteada 1}, $d(\alpha,\beta)\leq 2n+1$, and thus 
$d(\rho, \sigma)\leq d(\rho,\alpha)+d(\alpha,\beta)+d(\beta,\sigma)\leq 2n+2-\lceil\frac{2n+1}{3}\rceil- M + 2n+1 + 2n+2-\lceil\frac{2n+1}{3}\rceil - M = 6n+5-2\lceil\frac{2n+1}{3}\rceil-2M$. Since $M\geq \lceil \frac{n-1}{2}\rceil+1$, it follows that $n-M\leq n-\lceil \frac{n-1}{2}\rceil-1=\lfloor\frac{n-1}{2}\rfloor$. Hence, $d(\rho, \sigma)\leq 6n+5-2\lceil\frac{2n+1}{3}\rceil-2M \leq 4n+5-2\lceil\frac{2n+1}{3}\rceil+2\lfloor\frac{n-1}{2}\rfloor \leq 4n+5-2\lceil\frac{2n+1}{3}\rceil+ n-1=5n+4-2\lceil\frac{2n+1}{3}\rceil$.

If $|C_{g}^{\sigma'}|\geq \lceil \frac{n-1}{2}\rceil+1$, then $M\geq|C_{g}^{\sigma'}|$. Now, start with coloring $\sigma'$ and recolor all vertices in $\{b+n+2,b+n+3,\ldots,b+2n\}\setminus C_h^{\sigma'}$ with color $h$, the current coloring $\beta$ is in $\mathscr{C}$ and $d(\beta,\sigma)\leq n+2-\lceil\frac{2n+1}{3}\rceil + M-1$. Moreover, by Lemma \ref{lemma: grafica de recoloracion conexa para k=3 y n volteada 1}, $d(\alpha,\beta)\leq 2n+1$, and thus $d(\rho, \sigma)\leq d(\rho,\alpha)+d(\alpha,\beta)+d(\beta,\sigma)\leq 2n+2-\lceil\frac{2n+1}{3}\rceil - M 
+ 2n+1 + n+2-\lceil\frac{2n+1}{3}\rceil + M-1 = 5n + 4 - 2\lceil\frac{2n+1}{3}\rceil$. 

\begin{case}
    $i\neq j$.
\end{case} 

Let $h$ be the color in $\{1,2,3\}\setminus\{i,j\}$.

\begin{subcase} $\{a+n+1,a+2n+1\}\subseteq C_j^{\rho'}$.\end{subcase}

Hence, $\{a+n+2,a+n+3,\ldots,a+2n\}=C_h^{\rho'}$ and $\{a+n+1, a+2n+1\}= C_j^{\rho'}$. 
Then, from $\rho'$, we obtain a coloring $\alpha\in \mathscr{C}$ by recoloring vertex $a+n+1$ with color $h$. Where $\alpha$ is such that $d(\rho,\alpha)\leq n+2-\lceil \frac{2n+1}{3}\rceil+1$ and whose singleton color class is colored with color $j$.

Now, let us consider coloring $\sigma'$, we have that $C_i^{\sigma'}$ and $C_h^{\sigma'}$ satisfy that $C_i^{\sigma'} \cup C_h^{\sigma'} =\{b+n+1, b+n+2, \ldots, b+2n+1\}$ and both are nonempty.

Whenever $\{b+n+1, b+2n+1\}\subseteq C_h^{\sigma'}$, it follows that $C_i^{\sigma'} =\{b+n+2, b+n+3, \ldots, b+2n\}$. So, we recolor vertex $b+2n+1$ with color $i$, and thus the current coloring $\beta$ is in $\mathscr{C}$, its singleton color class is colored with color $h$ and $d(\beta,\sigma)\leq n+2-\lceil \frac{2n+1}{3}\rceil+1$. Moreover, by Lemma \ref{lemma: grafica de recoloracion conexa para k=3 y n volteada 1}, $d(\alpha,\beta)\leq 2n+1$, and thus $d(\rho, \sigma)\leq d(\rho,\alpha)+d(\alpha,\beta)+d(\beta,\sigma)\leq n+3-\lceil \frac{2n+1}{3}\rceil + 2n+1 + n+3-\lceil \frac{2n+1}{3}\rceil = 4n+7-2\lceil \frac{2n+1}{3}\rceil$.

Whenever $\{b+n+1, b+2n+1\}\subseteq C_i^{\sigma'}$, it follows that $C_h^{\sigma'} =\{b+n+2, b+n+3, \ldots, b+2n\}$. So, we recolor vertex $b+2n+1$ with color $h$, and thus the current coloring $\beta$ is in $\mathscr{C}$, its singleton color class is colored with color $i$ and $d(\beta,\sigma)\leq n+2-\lceil \frac{2n+1}{3}\rceil+1$. Moreover, by Lemma \ref{lemma: grafica de recoloracion conexa para k=3 y n volteada 1}, $d(\alpha,\beta)\leq 2n+1$, and thus $d(\rho, \sigma)\leq d(\rho,\alpha)+d(\alpha,\beta)+d(\beta,\sigma)\leq n+3-\lceil \frac{2n+1}{3}\rceil + 2n+1 + n+3-\lceil \frac{2n+1}{3}\rceil = 4n+7-2\lceil \frac{2n+1}{3}\rceil$.

Whenever $\sigma'(b+n+1)\neq\sigma'(b+2n+1)$, we may suppose w.l.o.g. that $\sigma'(b+n+1)=i$ and $\sigma'(b+2n+1)=h$. If $|C_i^{\sigma'}|\geq |C_h^{\sigma'}|$, then we recolor all vertices in $\{b+n+2, b+n+3, \ldots, b+2n\}\setminus C_i^{\sigma'}$ with color $i$, this takes at most $\lfloor \frac{n-1}{2} \rfloor$ steps and the current coloring $\beta$ is in $\mathscr{C}$ and its singleton color class is colored with color $h$. And, if $|C_h^{\sigma'}|\geq |C_i^{\sigma'}|$, then we recolor all vertices in $\{b+n+2, b+n+3, \ldots, b+2n\}\setminus C_h^{\sigma'}$ with color $h$, this takes at most $\lfloor \frac{n-1}{2} \rfloor$ vertex recolorings and the current coloring $\beta$ is in $\mathscr{C}$ and its singleton color class is colored with color $i$. Either way, 
$d(\beta,\sigma)\leq n+2-\lceil \frac{2n+1}{3}\rceil + \lfloor \frac{n-1}{2} \rfloor$. Moreover, by Lemma \ref{lemma: grafica de recoloracion conexa para k=3 y n volteada 1}, $d(\alpha,\beta)\leq 2n+1$, and thus $d(\rho, \sigma)\leq d(\rho,\alpha)+d(\alpha,\beta)+d(\beta,\sigma)\leq n+3-\lceil \frac{2n+1}{3}\rceil + 2n+1 + n+2-\lceil \frac{2n+1}{3}\rceil + \lfloor \frac{n-1}{2} \rfloor = 4n+6-2\lceil \frac{2n+1}{3}\rceil + \lfloor \frac{n-1}{2} \rfloor$.

In a similar way it can be proved that if  $\{b+n+1, b+2n+1\}\subseteq C_i^{\sigma'}$, then $d(\rho,\sigma)\leq 4n+6-2\lceil \frac{2n+1}{3}\rceil + \lfloor \frac{n-1}{2} \rfloor$.

\begin{subcase} $\{a+n+1,a+2n+1\}\subseteq C_h^{\rho'}$.\end{subcase}

Hence, $\{a+n+2,a+n+3,\ldots,a+2n\}=C_j^{\rho'}$ and $\{a+n+1, a+2n+1\}= C_h^{\rho'}$. 
Then, from $\rho'$, we obtain a coloring $\alpha\in \mathscr{C}$ by recoloring vertex $a+n+1$ with color $j$. Where $\alpha$ is such that $d(\rho,\alpha)\leq n+2-\lceil \frac{2n+1}{3}\rceil+1$ and whose singleton color class is colored with color $h$.

Now, let us consider coloring $\sigma'$, we have that $C_i^{\sigma'}$ and $C_h^{\sigma'}$ satisfy that $C_i^{\sigma'} \cup C_h^{\sigma'} =\{b+n+1, b+n+2, \ldots, b+2n+1\}$ and both are nonempty.

Whenever $\{b+n+1, b+2n+1\}\subseteq C_h^{\sigma'}$, it follows that $C_i^{\sigma'} =\{b+n+2, b+n+3, \ldots, b+2n\}$. So, we recolor vertex $b+2n+1$ with color $i$, and thus the current coloring $\beta$ is in $\mathscr{C}$, its singleton color class is colored with color $h$ and $d(\beta,\sigma)\leq n+2-\lceil \frac{2n+1}{3}\rceil+1$. Moreover, by Lemma \ref{lemma: grafica de recoloracion conexa para k=3 y n volteada 1}, $d(\alpha,\beta)\leq 3n-1$, and thus $d(\rho, \sigma)\leq d(\rho,\alpha)+d(\alpha,\beta)+d(\beta,\sigma)\leq n+3-\lceil \frac{2n+1}{3}\rceil + 3n-1 + n+3-\lceil \frac{2n+1}{3}\rceil = 5n+5-2\lceil \frac{2n+1}{3}\rceil$.

Whenever $\{b+n+1, b+2n+1\}\subseteq C_i^{\sigma'}$, it follows that $C_h^{\sigma'} =\{b+n+2, b+n+3, \ldots, b+2n\}$. So, we recolor vertex $b+2n+1$ with color $h$, and thus the current coloring $\beta$ is in $\mathscr{C}$, its singleton color class is colored with color $i$ and $d(\beta,\sigma)\leq n+2-\lceil \frac{2n+1}{3}\rceil+1$. Moreover, by Lemma \ref{lemma: grafica de recoloracion conexa para k=3 y n volteada 1}, $d(\alpha,\beta)\leq 2n+1$, and thus $d(\rho, \sigma)\leq d(\rho,\alpha)+d(\alpha,\beta)+d(\beta,\sigma)\leq n+3-\lceil \frac{2n+1}{3}\rceil + 2n+1 + n+3-\lceil \frac{2n+1}{3}\rceil = 4n+7-2\lceil \frac{2n+1}{3}\rceil$.

Whenever $\sigma'(b+n+1)\neq\sigma'(b+2n+1)$, we may suppose w.l.o.g. that $\sigma'(b+n+1)=i$ and $\sigma'(b+2n+1)=h$. Then we recolor all vertices in $\{b+n+2, b+n+3, \ldots, b+2n\}\setminus C_i^{\sigma'}$ with color $i$, this takes at most $n-1$ steps and the current coloring $\beta$ is in $\mathscr{C}$ and its singleton color class is colored with color $h$. Hence, 
$d(\beta,\sigma)\leq n+2-\lceil \frac{2n+1}{3}\rceil + n-1$. Moreover, by Lemma \ref{lemma: grafica de recoloracion conexa para k=3 y n volteada 1}, $d(\alpha,\beta)\leq 2n+1$, and thus $d(\rho, \sigma)\leq d(\rho,\alpha)+d(\alpha,\beta)+d(\beta,\sigma)\leq n+3-\lceil \frac{2n+1}{3}\rceil + 2n+1 + 2n+1-\lceil \frac{2n+1}{3}\rceil = 5n+5-2\lceil \frac{2n+1}{3}\rceil$.

In a similar way it can be proved that if  $\{b+n+1, b+2n+1\}\subseteq C_i^{\sigma'}$, then $d(\rho,\sigma)\leq 5n+5-2\lceil \frac{2n+1}{3}\rceil$. 

\begin{subcase}
    $\rho'(a+n+1)\neq \rho'(a+2n+1)$ and $\sigma'(b+n+1)\neq \sigma'(b+2n+1)$.
\end{subcase}
We may assume that $\rho'(a+n+1)=j$, $\rho'(a+2n+1)=h$, $\sigma'(b+n+1)=i$, and $\sigma'(b+2n+1)=h$.
Suppose w.l.o.g. that $|C_h^{\rho'}|\geq |C_h^{\sigma'}|$. Then, starting from $\rho'$, recolor all vertices in $\{a+n+2, a+n+3, \ldots, a+2n\}\setminus C_h^{\rho'}$ with color $h$, this takes at most $n-1-|C_h^{\rho'}|$ vertex recolorings. The current coloring $\alpha$ is in $\mathscr{C}$ and its singleton color class is colored with color $j$. Hence, $d(\rho,\alpha)\leq n+2-\lceil \frac{2n+1}{3}\rceil + n-1-|C_h^{\rho'}|$.

Now, starting from $\sigma'$, recolor all vertices in $\{b+n+2, b+n+3, \ldots, b+2n\}\setminus C_i^{\rho'}$ with color $i$, this takes at most $|C_h^{\sigma'}|-1\leq |C_h^{\rho'}|-1$ vertex recolorings. The current coloring $\beta$ is in $\mathscr{C}$ and its singleton color class is colored with color $h$. Thus, $d(\beta,\sigma)\leq n+2-\lceil \frac{2n+1}{3}\rceil + |C_h^{\rho'}|-1$. 
Moreover, by Lemma \ref{lemma: grafica de recoloracion conexa para k=3 y n volteada 1}, $d(\alpha,\beta)\leq 2n+1$, and thus $d(\rho, \sigma)\leq d(\rho,\alpha)+d(\alpha,\beta)+d(\beta,\sigma)\leq 2n+1-\lceil \frac{2n+1}{3}\rceil -|C_h^{\rho'}| + 2n+1 + n+1-\lceil \frac{2n+1}{3}\rceil + |C_h^{\rho'}|=5n+3-2\lceil \frac{2n+1}{3}\rceil$. \vspace{1.7mm}

From all cases, it follows that $d(\rho,\sigma)\leq 5n+5-2\lceil \frac{2n+1}{3}\rceil \leq 5n+5-2\left( \frac{2n+1}{3}\right)=\frac{11n+13}{3}=3n+4+\frac{2n+1}{3}$. Hence, $\mathcal{D}_{3}(T)$ is connected and $\diam(\mathcal{D}_{3}(T))\leq 3n+4+\lfloor\frac{2n+1}{3}\rfloor$.
\end{proof}

Using a Python program, we obtained information on $\mathcal{D}_{k}(\vec{C}_{7}\langle 3 \rangle)$ for each $k\in\{3,4,5,6\}$, which is included in Table \ref{table}. 
Provided with all the acyclic subsets of $\vec{C}_{7}\langle 3 \rangle$ in which $0$ is the source and an empty set of edges $E$, the program performs the following steps:
\begin{enumerate}
    \item Make a list $L$ with every acyclic subset of $\vec{C}_{7}\langle 3 \rangle$ by rotating each acyclic subset in which $0$ is the source.
    \item Make a list $P$ with all the ordered partitions of $\{1,2,3,4,5,6,7\}$ with $k$ parts such that each part is either an element of $L$ or the empty set.
    \item Decide if two $k$-partitions ($k$-colorings), $P[i]$ and $P[j]$ with $i<j$, are adjacent in $\mathcal{D}_{k}(\vec{C}_{7}\langle 3 \rangle)$, and if they are, then include the pair $(i,j)$ to a list of edges $E$. 
    \item Construct the graph $\mathcal{D}_{k}(\vec{C}_{7}\langle 3 \rangle)$ with vertex set the first $\len(P)$ nonnegative integers and $E$ as the set of edges, and evaluate basic parameters of $\mathcal{D}_{k}(\vec{C}_{7}\langle 3 \rangle)$.
\end{enumerate}

\begin{table}[h!]
\begin{center}
\begin{tabular}{ |c|c|c|c|c|c|c|c|c| }
 \hline
& \multicolumn{8}{|c|}{$\mathcal{D}_{k}(\vec{C}_{7}\langle 3 \rangle)$}\\
 \hline
 $k$ & order & size & connected & $\delta$ & $\Delta$ & diameter & radius & girth\\
 \hline
 3 & 504 & 1,512 & $\checkmark$ & 6 & 6 & 8 & 7 & 3\\

 4 & 7,560 & 54,684 & $\checkmark$ & 13 & 15 & 8 & 7 & 3\\
 
 5 & 47,880 & 536,760 & $\checkmark$ & 20 & 24 & 8 & 7 & 3\\
 6 & 199,080 & 2,997,540 & $\checkmark$ & 27 & 33 & 8 & 7 & 3\\
 \hline
\end{tabular}
\end{center}
\caption{$\mathcal{D}_{k}(\vec{C}_{7}\langle 3 \rangle)$ for $k\in\{3,4,5,6\}$}
\label{table}
\end{table}

\begin{theorem}
    Let $k$ be an integer, with $ k\geq 3$. Then $\mathcal{D}_{k}(\vec{C}_{7}\langle 3 \rangle)$ is connected and has diameter 8.  
    \label{theorem: ST7}
\end{theorem}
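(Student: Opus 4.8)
The plan is to split the argument according to the number of available colors, since $\vec{C}_{7}\langle 3\rangle$ is the case $n=3$ of the family, so by Lemma \ref{lemma: n volteada aciclico de n} every color class is acyclic of size at most $3$, the only acyclic triples being the \emph{blocks} $\{a,a+1,a+2\}$ and the \emph{spreads} $\{a,a+2,a+4\}$, while the forbidden triangles are exactly the sets $\{i,i+3,i+4\}$ (Lemma \ref{lemma: acyclic triangle in circulant tournament with (n+1,0)}). Consequently every coloring has between $3$ and $7$ nonempty color classes. For the small values $k\in\{3,4,5,6\}$, the statement is precisely the content of Table \ref{table}: the Python program certifies that $\mathcal{D}_k(\vec{C}_{7}\langle 3\rangle)$ is connected with diameter $8$. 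Thus it only remains to treat $k\geq 7$, where I would argue directly rather than invoking Corollary \ref{corollary: n-mixing}, whose generic bound $2n=14$ is far too weak for the sharp value $8$.

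For the upper bound when $k\geq 7$, let $\alpha$ and $\beta$ be two $k$-colorings; since $|V|=7$, each uses at most $7$ colors. The aim is to recolor every vertex with $\alpha(v)\neq\beta(v)$ to its target $\beta(v)$, touching each of the seven vertices at most once, plus at most one auxiliary recoloring, for a total of at most $8$. I would order these recolorings so that at each step the set of vertices currently carrying a given color $\beta(v)$ stays inside an acyclic set; since each class $C_i^\beta$ is itself acyclic, the only obstruction is a vertex $w$ with $\alpha(w)=\beta(v)$ but $\beta(w)\neq\beta(v)$ that closes a monochromatic triangle with $v$ and the already-correctly-placed vertices of color $\beta(v)$. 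Using the explicit list of acyclic triples and forbidden triangles above, I would show that such a conflict can be forced at most once along a well-chosen order; when it occurs, one parks the offending vertex on a color that is under-used by $\beta$ (always possible: the classes have size $\leq 3$, so merging one extra vertex keeps the class acyclic), spending a single additional recoloring. This gives $d(\alpha,\beta)\leq 7+1=8$, uniformly in $k\geq 7$.

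For the lower bound one must be careful, because adding colors can only shorten distances, so the value $8$ recorded for $k=6$ does not transfer automatically to larger $k$; I would instead exhibit an explicit pair of $k$-colorings and certify $d(\alpha,\beta)\geq 8$ by a potential-function argument that decreases by at most $1$ per recoloring, based on how the block/spread type and the cyclic position in $\mathbb{Z}_{7}$ of the classes must change. The natural candidates are a pair of spread colorings (or two colorings supported on three colors realizing the $k=3$ diameter), for which the routing of the underlying permutation of the seven vertices is not actually accelerated by the spare colors, since two classes can already be merged and then split. The main obstacle is exactly these two sharp-constant claims: on the upper side, proving that a single auxiliary recoloring always suffices to clear acyclicity conflicts, and on the lower side, producing one invariant that is valid for all $k\geq 7$ at once, since the extra colors afford more room to shortcut and a naive monovariant degrades. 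The highly symmetric structure of the Paley tournament $ST_{7}=\vec{C}_{7}\langle 3\rangle$ (its vertex- and arc-transitivity) is what I would exploit to force both estimates through simultaneously.
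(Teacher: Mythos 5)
Your proposal stops short of proving exactly the two things that need proof. For $k\in\{3,4,5,6\}$ you, like the paper, simply quote Table~\ref{table}; but for $k\geq 7$ both of your key claims --- that every pair of colorings is at distance at most $7+1=8$, and that some pair is at distance at least $8$ --- are left as intentions (``I would show \ldots''), and you yourself name them as ``the main obstacle.'' A plan that defers its two sharp-constant claims is not a proof. The paper closes the upper bound by a different and fully explicit route: a case analysis on the sets $A,B$ of colors actually used by $\alpha$ and $\beta$. If $|A|\geq 6$ or $|B|\geq 6$, Lemmas~\ref{lemma: k colors k vertices} and~\ref{lemma: k colors k+1 vertices} give $d(\alpha,\beta)\leq 7$; if $|A\cup B|\leq 6$, then $\alpha$ and $\beta$ lie in an induced subgraph of $\mathcal{D}_k(\vec{C}_{7}\langle 3 \rangle)$ isomorphic to $\mathcal{D}_{|A\cup B|}(\vec{C}_{7}\langle 3 \rangle)$, so the computed diameter $8$ applies; the remaining cases $|A\cap B|\in\{0,1,2,3\}$ are handled by explicit recoloring schedules, and connectivity for $k\geq 7$ is immediate from Corollary~\ref{corollary: n-mixing}. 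Your one genuinely valuable observation --- that the diameter-$8$ value recorded for $k\leq 6$ does not transfer to $k\geq 7$, because pairwise distances can only decrease when colors are added --- is correct and important, but you do not resolve it either; the paper resolves it (tersely) by exhibiting an explicit pair of $3$-colorings and asserting that its distance equals $8$ for every $k\geq 3$.

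There is also a structural error that would invalidate the conflict analysis you sketch. You claim, citing Lemma~\ref{lemma: n volteada aciclico de n} at $n=3$, that the only acyclic triples of $\vec{C}_{7}\langle 3 \rangle$ are the blocks $\{a,a+1,a+2\}$ and the spreads $\{a,a+2,a+4\}$, with the forbidden triangles $\{i,i+3,i+4\}$ set apart. But Lemma~\ref{lemma: acyclic triangle in circulant tournament with (n+1,0)} states precisely that $\{i,i+3,i+4\}$ induces a maximal \emph{acyclic} subdigraph: its arcs are $i+3\to i$, $i\to i+4$, $i+3\to i+4$, a transitive triangle. So $\vec{C}_{7}\langle 3 \rangle$ has three rotation classes of acyclic triples, $21=\sum_{v}\binom{d^{+}(v)}{2}=7\binom{3}{2}$ in all, not two, and a forbidden triangle can perfectly well occur as a color class (for instance $\{0,3,4\},\{1,2\},\{5,6\}$ is a valid $3$-coloring). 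The source of the error is that the classification in Lemma~\ref{lemma: n volteada aciclico de n} cannot be invoked at $n=3$: its proof requires a vertex $a+j$ with $1<j<n$ inside $V$, which need not exist when $n=3$, and indeed the conclusion fails there --- this is why the paper only applies that lemma for $n\geq 4$. Any ordering-and-parking argument built on your two-family list would miss seven of the twenty-one acyclic triples and is therefore unsound as it stands.
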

\begin{proof}
    Let $T=\vec{C}_{7}\langle 3 \rangle$.
    We have that $\mathcal{D}_{k}(T)$ is connected and has diameter 8, for each $k\in\{3,4,5,6\}$, by the information in Table \ref{table}.
    Moreover, $\mathcal{D}_{k}(T)$ is connected for every $k\geq 7$, by Corollary \ref{corollary: n-mixing}.

    Let $\alpha$ and $\beta$ be two $k$-colorings of $T$ for some $k\geq 7$. Let $A=\{i\in \{1,2,\ldots,k\}\colon C_i^\alpha\neq \emptyset\}$ and $B=\{i\in \{1,2,\ldots,k\}\colon C_i^\beta\neq \emptyset\}$. Since $3\leq |A| \leq 7$ and $3\leq |B|\leq 7$, it follows that $3\leq |A \cup B|\leq 14$.

    If $6\leq |A|\leq 7$ or $6\leq |B|\leq 7$, we have that one of the colorigs has seven nonempty color classes, all singletons, or it has six nonempty color classes, five singletons and one with two vertices. Thus, by Lemmas \ref{lemma: k colors k vertices} and \ref{lemma: k colors k+1 vertices}, $d(\alpha,\beta)\leq 7$.
    So, we assume $3\leq |A| \leq 5$ and $3\leq |B|\leq 5$.
    
    If $|A\cup B|\leq 6$, then the subgraph of $\mathcal{D}_{k}(T)$ induced by the colorings that have empty color classes for each color in  $\{1,2,\ldots, k\}\setminus (AUB)$ is isomorphic to $\mathcal{D}_{h}(T)$, where $h=|A\cup B|$. Hence, $d(\alpha,\beta)\leq 8$. So, we assume $|A\cup B|\geq 7$.

    Moreover, if $|A\cap B|\leq 1$, then $\alpha$ and $\beta$ have at most one nonempty color class of the same color, say color $j$. Then, starting at $\alpha$ we recolor each vertex $u\in \mathbb{Z}_7\setminus C_j^\beta$ with color $\beta(u)$, and then give color $j$ to vertices in $C_j^\beta$. In this way,  $d(\alpha,\beta)\leq 7$. So, we assume $|A\cap B|\geq 2$. Hence, $4\leq |A| \leq 5$ and $4\leq |B|\leq 5$.

    Whenever $|A\cap B|= 2$, it follows that $\alpha$ or $\beta$ has 5 nonempty color classes. Suppose w.l.o.g. that $|B| = 5$, then $|B\setminus A|=3$. Let $j$ and $j'$ be the two colors in $A\cap B$.
    Start at $\alpha$. Recolor each vertex $u\in \mathbb{Z}_7\setminus C_j^\beta\cup C_{j'}^\beta$ with color $\beta(u)$. We may assume there are four vertices in $C_j^\beta\cup C_{j'}^\beta$, and that one of those four vertices, say $v$, is colored with a color in $B\setminus A$, otherwise we recolor one vertex, say $v$, with a color in $B\setminus A$ such that it is assigned to exactly one vertex. Then we have three vertices that need to be colored with colors $j$ and $j'$, this can be achieved in at most 3 steps, by Lemma \ref{lemma: k colors k+1 vertices}. After, recolor $v$ with $\beta(v)$. In this way, $d(\alpha,\beta)\leq 8$.

    Whenever $|A\cap B|= 3$, it follows that $|A| = |B| = 5$, then $|B\setminus A|=2$. Let $i$ and $i'$ be the two colors in $B\setminus A$.
    Start at $\alpha$. Recolor each vertex $u$ in $C_{i}^\beta\cup C_{i'}^\beta$ with color $\beta(u)$, then there are at most five vertices that need to be recolored with the three colors in $A\cap B=\{j,j',j''\}$. Observe that, in the current coloring $\gamma$, one of the color classes $C_h^\gamma$ with $h\in\{j, j', j''\}$ is empty or a singleton, suppose w.l.o.g. that $h=j$. Let us assume $C_{j}^\gamma \neq \emptyset$. If $C_{j}^\gamma=C_{j}^\beta$, the four remaining vertices can be recolored as above. So we assume $C_{j}^\gamma\neq C_{j}^\beta$, so we recolor some vertex in $C_{j}^\beta\setminus C_{j}^\gamma$ with color $j$. Now, one of the color classes of $j'$ or $j''$ is empty or a singleton. We proceed as before, until we end up with $\alpha$. Since at most one vertex is recolored twice, $d(\alpha,\beta)\leq 8$.
    
    Therefore, $\diam(\mathcal{D}_{k}(T))\leq 8$. Moreover,  let $\alpha, \beta\colon \mathbb{Z}_{7}\to \{1,2,3\}$ be the colorings given by $\alpha(0)=1$, $\alpha(1)=1$, $\alpha(2)=1$ $\alpha(3)=2$, $\alpha(4)=2$, $\alpha(5)=2$ and $\alpha(6)=3$, and $\beta(0)=2$, $\beta(1)=2$, $\beta(2)=2$ $\beta(3)=3$, $\beta(4)=1$, $\beta(5)=1$ and $\beta(6)=1$. It is easy to check that $d(\alpha, \beta)=8$ for every $k\geq 3$. Hence, $\diam(\mathcal{D}_{k}(T)=8$.
\end{proof}

\begin{theorem}
    Let $n$ and $k$ be two integers, $n\geq 4$ and $ k\geq 4$. Then $\mathcal{D}_{k}(\vec{C}_{2n+1}\langle n \rangle)$ is connected whenever $k\neq \frac{2n+1}{3}$, and has diameter at most $4n+2+\lfloor \frac{n}{2} \rfloor$.
    \label{theorem: D_k(n) k not multiple of 3}
\end{theorem}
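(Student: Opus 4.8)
The plan is to reduce each of two given $k$-colorings of $T=\vec{C}_{2n+1}\langle n\rangle$ to a coloring in the family $\mathscr{C}$ and then to connect the two reduced colorings by Lemma~\ref{lemma: grafica de recoloracion conexa para k=3 y n volteada 1}, exactly as in the $k=3$ argument of Theorem~\ref{teo: grafica de recoloracion conexa para k=3 y n volteada}, but now exploiting the extra color to remove the forbidden-triangle obstruction. First I would dispose of large $k$: since $T$ has $2n+1$ vertices, Corollary~\ref{corollary: n-mixing} already gives, for every $k\ge 2n$, that $\mathcal{D}_k(T)$ is connected with $\diam(\mathcal{D}_k(T))\le 2(2n+1)=4n+2\le 4n+2+\lfloor n/2\rfloor$. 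Hence I may assume $4\le k\le 2n-1$.

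The key enabling observation is that the hypothesis $k\neq\frac{2n+1}{3}$ forces every $k$-coloring of $T$ to have a color class that is not a forbidden triangle. Indeed, the $k$ class sizes sum to $2n+1$; if $3k>2n+1$ then some class has at most two vertices, while if $3k<2n+1$ then some class has at least four vertices, and in either case that class cannot be a forbidden triangle. This is precisely where the argument must break down for $k=\frac{2n+1}{3}$, where $T$ admits colorings all of whose classes are forbidden triangles and hence are frozen. Because such a non-forbidden class always exists, Lemma~\ref{lemma: extend a color class} applies and lets me transform any coloring into one having a color class equal to a full interval $\{a,a+1,\dots,a+n-1\}$; I would choose the class to extend to be a smallest one when $3k>2n+1$ and one of size at least four when $3k<2n+1$, so that the extension cost $n+2-|C|$ stays small.

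Given $\alpha$ and $\beta$, I would extend a non-forbidden class of each to a full interval, obtaining $\alpha_1,\beta_1$, and then reorganize the $n+1$ complementary vertices into an interval plus a singleton so as to land in $\mathscr{C}$. Here Lemma~\ref{lemma: n volteada aciclico de n} is the crucial structural input: the complement of a length-$n$ interval induces an almost-transitive tournament whose only source of directed cycles is the single reversed jump joining its two endpoints, so the only acyclicity constraint to respect during the reorganization is that no color class may contain both endpoints together with an interior vertex. Using the spare colors I would sweep the maximal transitive part $\{a+n,\dots,a+2n-1\}$ into one color and leave the almost-sink endpoint as the singleton, arranging the two reduced colorings $\alpha',\beta'\in\mathscr{C}$ to have singletons of different colors; Lemma~\ref{lemma: grafica de recoloracion conexa para k=3 y n volteada 1}, whose recoloring sequences use only three colors and so remain valid in $\mathcal{D}_k(T)$, then bounds $d(\alpha',\beta')$ by $2n+1$.

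The hard part will be the distance bookkeeping needed to keep the total at $4n+2+\lfloor n/2\rfloor$. Summing two independent reductions and the $\mathscr{C}$-distance naively overshoots this target, so the reductions must be optimized and partly fused with the connection step: one spends the $\lfloor n/2\rfloor$ surplus on a single ``merge the smaller of two classes'' move, whose cost is at most $\lfloor n/2\rfloor$ precisely because for $k\ge 4$ the two smallest classes have combined size at most $n$, and one exploits the freedom in the choice of the extended class and of the singleton color to make the remaining contributions telescope. I expect this accounting, together with the accompanying case analysis according to how the two window endpoints are colored and whether $\alpha'$ and $\beta'$ share a color palette, to be the main obstacle, the recurring technical point being that every acyclicity check reduces to avoiding a monochromatic cycle through the unique reversed jump.
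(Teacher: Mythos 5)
Your setup is sound in places---the reduction of large $k$ via Corollary~\ref{corollary: n-mixing}, and the key observation that $k\neq\frac{2n+1}{3}$ forces every $k$-coloring to contain a class that is not a forbidden triangle (by averaging: some class has size at most $2$ or at least $4$), which is exactly how the paper's proof escapes the frozen configurations in its final case. But there is a genuine gap, and you have located it yourself without closing it: the distance accounting. Your strategy is symmetric---normalize \emph{both} colorings into the family $\mathscr{C}$ and connect them by Lemma~\ref{lemma: grafica de recoloracion conexa para k=3 y n volteada 1}. Each normalization costs roughly $n$ (extending a class to a full interval via Lemma~\ref{lemma: extend a color class}) plus roughly $n$ more (sweeping the $n+1$ complementary vertices into an interval plus a singleton), and the connection costs up to $2n+1$, for a total near $6n$, far above $4n+2+\lfloor n/2\rfloor$. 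The remedies you sketch (``fusing'' the reduction with the connection, ``telescoping'' contributions, spending $\lfloor n/2\rfloor$ on one merge) are not arguments; no mechanism is given by which roughly $\tfrac{3n}{2}$ steps disappear. Note also that your heuristic of extending a \emph{smallest} class is backwards: the extension cost $n+2-|C|$ grows as $|C|$ shrinks, so this choice does not help the bookkeeping.

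The missing idea is the paper's asymmetric argument, which never passes through $\mathscr{C}$ and never invokes Lemma~\ref{lemma: grafica de recoloracion conexa para k=3 y n volteada 1} for $k\geq 4$. The paper only normalizes $\rho$: either two of its classes fit in a common half-interval and the smaller is merged into the larger (at most $\lfloor n/2\rfloor$ steps, creating an empty class), possibly after one extension step of cost at most $n$; and then---this is the crux---a coloring with an empty color class $k$ can be transformed \emph{directly into $\sigma$} in at most about $3n+2$ steps, by placing color $k$ on the interval $\{0,1,\ldots,n-1\}$ anchored at the source of $C_k^\sigma$ and then recoloring the remaining vertices to their $\sigma$-colors in a careful order (the order being dictated exactly by the reversed-jump cycles you correctly identified). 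This direct $\rho\to\sigma$ sweep is what keeps the total at $4n+2+\lfloor n/2\rfloor$, and nothing in your proposal substitutes for it. A secondary unresolved point in your plan: Lemma~\ref{lemma: grafica de recoloracion conexa para k=3 y n volteada 1} compares two colorings whose three nonempty classes use the \emph{same} three colors, whereas your two reduced $k$-colorings may use different triples of colors, so even the connection step would need an additional palette-alignment argument (with its own cost) that you have not supplied.
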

\begin{proof}
    Let $T=\vec{C}_{2n+1}\langle n \rangle$.
    Since $T$ is 3-dichromatic, $\mathcal{D}_{k}(T)$ is defined for every $k\geq 4$.  
   Let $\rho, \sigma \colon \mathbb{Z}_{2n+1}\to \{1,2,\ldots,k\}$ be two colorings of $T$. And let $C_1^\rho$, $C_2^\rho$, \ldots, $C_k^\rho$ and $C_1^\sigma$, $C_2^\sigma$, \ldots, $C_k^\sigma$ be the color classes of $\rho$ and $\sigma$, respectively.

\begin{case}   $C_i^\rho=\emptyset$ for some $i\in \{1,2,\ldots, k\}$.
\end{case}
Supposse w.l.o.g. that $C_k^\rho=\emptyset$. 
Since $C_k^{\sigma}$ induces an acyclic tournament, it has a source. Assume w.l.o.g. that 0 is the source of this tournament, then $C_k^{\sigma}\subseteq \{0,1,2,\ldots, n+1\}\setminus\{n\}$. Start at $\rho$ and recolor each vertex in $\{0,1,2,\ldots, n-1\}$ with color $k$, this takes $n$ steps. Call $\rho'$ to the current coloring.

Whenever $\sigma(n+1)\neq k$.
If $\rho'(n)=\rho'(2n)$, then  recolor, if needed, one of this two vertices with the color assigned to it by $\sigma$. After, recolor each vertex $u$ in $\{n+1,n+2,\ldots,2n-1\}$ (again, if needed) with color $\sigma(u)$. Then, recolor the last vertex in $\{n,2n\}$.
Finally, recolor each vertex $u$ in $\{0,1,2,\ldots, n-1\}\setminus C_k^\sigma$ with color $\sigma(u)$. The total process takes at most $n+n+1+n-|C_k^\sigma|\leq 3n+1$.
If $\rho'(n)\neq \rho'(2n)$, then  recolor each vertex $u$ in $\{n+1,n+2,\ldots,2n-1\}$, if needed, with color $\sigma(u)$. Then, recolor the vertices in $\{n,2n\}$ with the color that $\sigma$ gives them.
Finally, recolor each vertex $u$ in $\{0,1,2,\ldots, n-1\}\setminus C_k^\sigma$ with color $\sigma(u)$. The total process takes at most $n+n+1+n-|C_k^\sigma|\leq 3n+1$.

Whenever $\sigma(n+1)= k$.
If $\rho'(n)=\rho'(2n)$, then recolor, if needed, one of this two vertices with the color assigned to it by $\sigma$. After, recolor each vertex $u$ in $\{n+2,n+3,\ldots,2n-1\}$ with color $\sigma(u)$. Next, if needed, recolor the other vertex in $\{n,2n\}$. Then, recolor vertex 1 with color $\sigma(1)$. After, recolor vertex $n+1$ with color $k$. Finally, recolor each vertex $u$ in $\{0,1,2,\ldots, n-1\}\setminus C_k^\sigma$ with color $\sigma(u)$. The total process takes at most $n+n+2+n-(|C_k^\sigma|-1)\leq 3n+1$.
If $\rho'(n)\neq \rho'(2n)$, then  recolor each vertex $u$ in $\{n+2,n+3,\ldots,2n-1\}$, if needed, with color $\sigma(u)$. Then, recolor the vertices in $\{n,2n\}$ with the color that $\sigma$ gives them. After, recolor vertex 1 with color $\sigma(1)$ and, next, recolor vertex $n+1$ with color $k$.
Finally, recolor each vertex $u$ in $\{0,1,2,\ldots, n-1\}\setminus C_k^\sigma$ with color $\sigma(u)$. The total process takes at most $n+n+2+n-(|C_k^\sigma|-1)\leq 3n+1$. 

\begin{case} There are two color classes $C_i^\rho$ and $C_j^\rho$, with $\{i,j\}\subseteq\{1,2,\ldots, k\}$ and $i\neq j$, such that $C_i^\rho \cup C_j^\rho \subseteq \{a+1,a+2,\ldots,a+n\}$ for some $a\in \mathbb{Z}_{2n+1}$.
\end{case}

Suppose w.l.o.g. that $|C_i^\rho|\geq |C_j^\rho|$. Start at $\rho$, then recolor all vertices in $C_j^\rho$ with color $i$. This process takes at most $\lfloor \frac{n}{2} \rfloor$ steps, and the new coloring $\rho'$ satisfies that $C_j^{\rho'}=\emptyset$, and thus $d(\rho',\sigma)\leq 3n+2$, by Case 1. Hence, $d(\rho,\sigma)\leq d(\rho,\rho')+d(\rho',\sigma) \leq 3n+2+\lfloor \frac{n}{2} \rfloor$.

\begin{case}
    For each pair of color classes $C_i^\rho$ and $C_j^\rho$, with $\{i,j\}\subseteq\{1,2,\ldots, k\}$ and $i\neq j$, there is no $a\in \mathbb{Z}_{2n+1}$ such that $C_i^\rho \cup C_j^\rho \subseteq \{a+1,a+2,\ldots,a+n\}$.
\end{case}

\begin{subcase} There is a color class $C_i^\rho$, with $i\in\{1,2,\ldots, k\}$, such that $C_i^\rho \subseteq \{a,a+1,a+2,\ldots,a+n-1\}$ and $C_i^\rho\neq \emptyset$, for some $a\in \mathbb{Z}_{2n+1}$. 
\end{subcase}

Then $C_i^\rho$ is not a forbidden triangle. 
Assume w.l.o.g. that $a=0$. So, if we start at $\rho$ we can recolor every vertex in $\{0,1,2\ldots,n-1\}\setminus C_i^\rho$ with color $i$, this process takes at most $n-1$ steps. Call $\rho'$ to the current coloring. It follows that there must be two color classes of $\rho'$  contained in $\{n,n+1, \ldots, 2n-1\}$ or in $\{n+1,n+2, \ldots, 2n\}$, so we can proceed as in Case 2. An thus, $d(\rho,\sigma)\leq n-1+ 3n+2+\lfloor \frac{n}{2} \rfloor \leq 4n+1+\lfloor \frac{n}{2} \rfloor$. 

\begin{subcase}
    For each color class $C_i^\rho$, with $i\in\{1,2,\ldots, k\}$, there exists an $a\in \mathbb{Z}_{2n+1}$ such that $\{a,a+n+1\}\subseteq C_i^\rho$.
\end{subcase}
    Since $k\neq \frac{2n+1}{3}$, it follows that, for some $s\in\{1,2,\ldots, k\}$, $|C_s^\rho|\neq 3$, and thus $\rho$ has a color class that is not a forbidden triangle.
    By Lemma \ref{lemma: extend a color class}, there is a coloring $\rho'\colon \mathbb{Z}_{2n+1}\to \{1,2,\ldots, k\}$ such that $C_s^{\rho'}=\{a,a+1,\ldots, a+n-1\}$  for some $a\in \mathbb{Z}_{2n+1}$ and $d(\rho,\rho')\leq n$.
    As in Case 3.1, there must be two color classes of $\rho'$  contained in $\{a+n,a+n+1, \ldots, n+2n-1\}$ or in $\{a+n+1,a+n+2, \ldots, a+2n\}$, so we can proceed as in Case 2. An thus, $d(\rho,\sigma)\leq n+ 3n+2+\lfloor \frac{n}{2} \rfloor \leq 4n+2+\lfloor \frac{n}{2} \rfloor$. 
\end{proof}

\begin{theorem}
    Let $n\geq 4$ such that $2n+1\equiv 0 \pmod{3}$. Then $\mathcal{D}_{\frac{2n+1}{3}}(\vec{C}_{2n+1}\langle n \rangle)$ consists of $3(\frac{2n+1}{3})!$ isolated vertices and one further component that has diameter at most $2n+1+\lfloor \frac{2n+1}{4}\rfloor$.
    \label{theorem: D_k(n) k a multiple of 3}
\end{theorem}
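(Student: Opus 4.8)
Set $k=\frac{2n+1}{3}$, so $T=\vec C_{2n+1}\langle n\rangle$ has $3k$ vertices and we are using $k$ colors. The argument splits into a structural part (pinning down the isolated vertices) and a recoloring part (the diameter of the remaining component). \emph{Identifying the frozen colorings.} I would first show that a $k$-coloring $\alpha$ is an isolated vertex of $\mathcal D_k(T)$ if and only if every color class of $\alpha$ is a forbidden triangle. If $\alpha$ had an empty class, any vertex could be moved to it, so a frozen $\alpha$ uses all $k$ colors; and each class must be a \emph{maximal} acyclic set, for otherwise some outside vertex could be added to it and recolored. A single vertex is never maximal, and a pair $\{u,v\}$ with $u\to v$ is maximal only if every other vertex lies in $N^+(v)\cap N^-(u)$, impossible since $|N^+(v)|=n<2n-1$; hence every maximal acyclic set has at least three vertices. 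With $k$ classes of size $\ge 3$ summing to $3k$, each class has size exactly $3$, i.e.\ is a transitive triangle. Finally a size-$3$ maximal acyclic set is exactly a forbidden triangle: by vertex-transitivity assume its source is $0$, so the other two vertices lie in $N^+(0)=\{1,\dots,n-1\}\cup\{n+1\}$, and a short case check (using $n\ge 4$ and the acyclic sets of Lemma~\ref{lemma: n volteada aciclico de n} as room to extend) shows the triangle is extendable unless it equals $\{0,1,n+1\}$, the forbidden triangle with source $0$.

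\emph{Counting them.} Since $n+1\equiv -n\pmod{2n+1}$, the forbidden triangle $\{i,i+n,i+n+1\}$ equals the arithmetic progression $\{i-n,i,i+n\}$ of common difference $n$. As $\gcd(n,2n+1)=1$, the bijection $x\mapsto n^{-1}x$ of $\mathbb Z_{2n+1}$ carries these progressions to the triples of consecutive residues, so partitions of $V(T)$ into forbidden triangles correspond to tilings of the cyclic group $\mathbb Z_{3k}$ by blocks of three consecutive elements. There are exactly three such tilings (one per starting offset modulo $3$), and each unordered partition into forbidden triangles yields $k!$ colorings by permuting colors; hence $\mathcal D_k(T)$ has exactly $3\,k!=3(\tfrac{2n+1}{3})!$ isolated vertices, and these account for all frozen colorings.

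\emph{The non-frozen component.} Because a frozen coloring is isolated, recoloring a single vertex of any non-frozen coloring again gives a non-frozen coloring; so the non-frozen colorings are closed under adjacency and it suffices to join two arbitrary non-frozen colorings $\alpha,\beta$ within the claimed bound. Each has a class that is not a forbidden triangle, so by Lemma~\ref{lemma: extend a color class} I would first move a largest such class to a consecutive block $\{a,a+1,\dots,a+n-1\}$ of length $n$. The complement $\{a+n,a+n+1,\dots,a+2n\}$ then induces a transitive tournament together with the single reversed arc $a+2n\to a+n$, which makes its acyclic subsets rigid (an acyclic subset containing both endpoints can contain nothing between them). I would exploit this to transform one standard coloring into the other by sweeping the $n+1$ remaining vertices in increasing order, recoloring each essentially once; when a direct recoloring would close the cycle through the reversed arc, a vertex is parked in a temporarily emptied color (as in Case~1 of Theorem~\ref{theorem: D_k(n) k not multiple of 3}) and corrected later, invoking Lemmas~\ref{lemma: k colors k vertices} and \ref{lemma: k colors k+1 vertices} to finish off the singleton and size-two classes.

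\emph{Main obstacle.} The structural part and the count are clean; the delicate point is the exact constant $2n+1+\lfloor\frac{2n+1}{4}\rfloor$. Here $k$ is tight (the smallest number of colors at which freezing occurs), so the schedule may use no color beyond the $k$ available, and the budget allows each vertex only one recoloring ($2n+1$ in total) together with at most $\lfloor\frac{2n+1}{4}\rfloor$ second recolorings. Making these two counts simultaneously tight requires a careful case analysis on how the small classes sit relative to the endpoints $a+n$ and $a+2n$ of the reversed arc, in the spirit of the subcases of Lemma~\ref{lemma: grafica de recoloracion conexa para k=3 y n volteada 1}; I expect this bookkeeping, rather than any conceptual difficulty, to be the bulk of the work.
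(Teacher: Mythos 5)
Your structural half is correct and, on one point, sharper than the paper's own treatment. The paper only proves the ``if'' direction of your characterization: it exhibits one partition into forbidden triangles, its two rotations, and their color permutations, shows these $3(\frac{2n+1}{3})!$ colorings are frozen via Lemma~\ref{lemma: acyclic triangle in circulant tournament with (n+1,0)}, and then gets ``precisely'' only indirectly, from the fact that every coloring with a non-forbidden-triangle class lies in one common component. That argument tacitly needs the fact that the three exhibited partitions are the \emph{only} partitions of $\mathbb{Z}_{2n+1}$ into forbidden triangles, which the paper never verifies. Your maximality argument (no empty class, no maximal acyclic set of size $1$ or $2$, hence $k$ classes of size exactly $3$, each a maximal triple, hence forbidden) together with the counting via $x\mapsto n^{-1}x$, which carries the progressions $\{i-n,i,i+n\}$ to consecutive triples and shows there are exactly three tilings, closes exactly this hole; the case check that a maximal acyclic triple must be forbidden does go through using the size-$n$ acyclic sets of Lemma~\ref{lemma: n volteada aciclico de n}.

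The genuine gap is the diameter bound, and you have named it yourself: your treatment of the non-frozen component is a plan, not a proof. The ``sweeping'' paragraph produces no explicit bound, and your closing paragraph concedes that the bookkeeping needed to reach $2n+1+\lfloor\frac{2n+1}{4}\rfloor$ is left undone. The paper closes this part with no new recoloring scheme at all: it re-runs Cases 1--3 of Theorem~\ref{theorem: D_k(n) k not multiple of 3}, observing that the hypothesis $k\neq\frac{2n+1}{3}$ enters that proof only in Case 3.2, and only to guarantee that $\rho$ has a color class which is not a forbidden triangle so that Lemma~\ref{lemma: extend a color class} applies; for non-frozen colorings such a class exists by assumption, so the entire case analysis transfers and yields $d(\rho,\sigma)\le 4n+2+\lfloor\frac{n}{2}\rfloor$. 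You should take that route instead of designing a fresh vertex-by-vertex schedule. One caveat that vindicates your suspicion about the constant: the paper's last step asserts $4n+2+\lfloor\frac{n}{2}\rfloor\le 2n+1+\lfloor\frac{2n+1}{4}\rfloor$, which is false for every $n\ge 4$ (at $n=4$ it reads $20\le 11$), so the constant in the statement is not actually justified by the paper either; what both its argument and a completed version of yours genuinely deliver is the bound $4n+2+\lfloor\frac{n}{2}\rfloor$ on the distance between non-frozen colorings.
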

\begin{proof}
    Let $n'\geq 1$ be such that $2n+1=6n'+3$, then $\frac{2n+1}{3}=2n'+1$ and $n=3n'+1$. Let 
    $T=\vec{C}_{6n'+3}\langle 3n'+1 \rangle=\vec{C}_{2n+1}\langle n \rangle$.
    Since $T$ is 3-dichromatic, $\mathcal{D}_{2n'+1}(T)$ is defined for each $n'\geq 1$.    
   Let $\alpha \colon \mathbb{Z}_{6n'+3}\to \{1,2,\ldots,2n'+1\}$ be the coloring of $T$ given by: $$\alpha(i)=\begin{cases}
        1 & \text{whenever }i\in\{0,3n'+1,3n'+2\};\\
        2 & \text{whenever }i\in\{1,2,3n'+3\};\\
        3 & \text{whenever }i\in\{3,3n'+4,3n'+5\};\\
        4 & \text{whenever }i\in\{4,5,3n'+6\};\\
        \vdots&\qquad\qquad\qquad\vdots\\
        2n' & \text{whenever }i\in\{3n'-2,3n'-1,6n'\};\\
        2n'+1 & \text{whenever }i\in\{3n',6n'+1,6n'+2\}.\\
    \end{cases}$$
   By Lemma \ref{lemma: acyclic triangle in circulant tournament with (n+1,0)}, each color class induces an acyclic subdigraph. 
   Moreover, since each color class is a maximum acyclic set, if we recolor any vertex, then the new color function is not acyclic, and thus $\alpha$ is an isolated vertex in $\mathcal{D}_{2n'+1}(T)$.
   
   We denote by $\alpha+\ell$ the coloring obtained from alpha by rotating its color classes by $\ell$ places, this is, $\alpha_\ell$ be the coloring defined as $\alpha_\ell(i+\ell)=\alpha(i)$ for each $i\in \mathbb{Z}_{6n'+3}$.
   
   The colorings $\alpha_1$ and $\alpha_2$ are also isolated vertices in $\mathcal{D}_{2n'+1}(T)$. Moreover, every permutation of the colors assigned to the color classes of $\alpha$, $\alpha_1$, or $\alpha_2$, results in a new coloring which is also an isolated vertex in $\mathcal{D}_{2n'+1}(T)$. Thus, $\mathcal{D}_{2n'+1}(T)$ contains at least $3(2n'+1)!$ isolated vertices.  

   Now, suppose that $\rho$ and $\sigma$ are two colorings such that each of them contains a color class that is not a forbidden triangle.
   Proceeding as in the proof of Theorem \ref{theorem: D_k(n) k not multiple of 3}, with Cases 1, 2 and 3 (excluding Case 3.2), we obtain that $d(\rho,\sigma)\leq 4(3n'+1)+2+\lfloor \frac{3n'+1}{2}\rfloor = 12n'+6+\lfloor\frac{3n'+1}{2}\rfloor\leq 2n+1+\lfloor\frac{2n+1}{4}\rfloor$. Moreover, as any two colorings, each containing a color class that is not a forbidden triangle, belong to the same connected component, it follows that $\mathcal{D}_{2n'+1}(T)$ has precisely $3(2n'+1)!$ isolated vertices.
\end{proof}

\section*{Acknowledgments}

This research was supported by SECIHTI Basic Science Project CBF2023-2024-552 \emph{Network Optimization: Moore Graphs and Cages} and 
the first author received support for a postdoctoral position at Department of Applied Mathematics and Systems at Metropolitan Autonomous University Cuajimalpa, under the grant \emph{Estancias Posdoctorales por M\'exico} by SECIHTI (CVU: 662742).
\bibliographystyle{plain}
\bibliography{RecoloringBib}

\end{document}